\newtheorem{theorem}{Theorem}[section]
\newtheorem{lemma}[theorem]{Lemma}
\newtheorem{prop}[theorem]{Proposition}
\newtheorem{corollary}[theorem]{Corollary}
\newtheorem*{thm}{Theorem}
\numberwithin{equation}{section}
\newcommand{\Gal}{\operatorname{Gal}}
\newcommand{\rank}{\operatorname{rank}}
\newcommand{\ord}{\operatorname{ord}}
\newcommand{\QQ}{\mathbb{Q}}
\newcommand{\cG}{\mathcal{G}}
\newcommand{\p}{\mathfrak{p}}
\newcommand{\ZZ}{\mathbb Z}
\newcommand{\Zp}{\ZZ_p}
\renewcommand{\c}{\mathrm{c}}
\newcommand{\MHG}{\mathfrak{M}_{H}(G)}
\newcommand{\M}{\mathcal{M}}
\newcommand{\X}{\mathcal{X}}
\newcommand{\Y}{\mathcal{Y}}
\newcommand{\Kinf}{ K_{\infty,\infty}}
\newcommand{\Ynm}{\Y_{n,m}}
\newcommand{\Xnm}{\X_{n,m}}
\newcommand{\enm}{e_{n,m}}
\newcommand{\Bnm}{B_{n,m}}
\newcommand{\Cnm}{C_{n,m}}
\newcommand{\Gnm}{G_{n,m}}
\newcommand{\Knm}{K_{n,m}}
\newcommand{\Inm}{I_{\nu_{n,m}}}
\newcommand{\Ipnm}{I_{\p_{n,m}}}
\newcommand{\bInm}{\bar I_{\nu_{n,m}}}
\newcommand{\IGnm}{I_{\Gnm}}
\newcommand{\cS}{\mathcal{S}}
\newcommand{\enn}{e_{n,n}}
\newcommand{\vpi}{\varpi}
\newcommand{\tX}{\tilde{\X}}
\newcommand{\tenn}{\tilde{e}_{n,n}}
 \title{Estimating class numbers over metabelian extensions}
     \author{Antonio Lei}
     \address{Antonio Lei, D\'epartement de math\'ematiques et de statistique, Universit\'e Laval, Pavillon Alexandre-Vachon, 1045 avenue de la M\'edecine, Qu\'ebec QC, Canada G1V 0A6}
     \email{antonio.lei@mat.ulaval.ca}
     \thanks{The author's research is supported by FRQNT's \'Etablissement de nouveaux chercheurs universitaires program 188809}
     \keywords{Non-commutative Iwasawa theory, class numbers, metabelian extensions}
     \subjclass[2010]{11R29 (primary), 11R23 (secondary)}
\begin{document}

\begin{abstract}
    Let $p$ be an odd prime and $\Kinf/K$ a  $p$-adic Lie extension whose Galois group is of the form $\Zp^{d-1}\rtimes \Zp$.  Under certain assumptions on the ramification of $p$ and the structure of an Iwasawa module associated to  $\Kinf$, we study the asymptotic behaviours of the size of the $p$-primary part of the ideal class groups over certain finite subextensions  inside $\Kinf/K$. This generalizes the classical result of Iwasawa and Cuoco-Monsky in the abelian case and gives a more precise formula than a recent result of Perbet in the non-commutative case when $d=2$.
\end{abstract}
 \maketitle
 
\section{Introduction}

\subsection{Setup and notation}
We fix throughout this article an odd prime $p$ and an integer $d\ge2$. Let $K$ be a number field that admits a unique prime $\p$ lying above $p$. Let $\Kinf$ be a $d$-dimensional $p$-adic Lie extension of $K$ in which only finitely many primes of $K$ ramify and  $\p$ is totally ramified. Furthermore, we fix a $\Zp$-extension $K^\c/K$ contained inside $\Kinf$ and assume that 
\begin{itemize}
\item $Gal(\Kinf/K^\c)$ is torsion-free and abelian;
\item   Every prime of $K$ that ramifies in $\Kinf$ decomposes into finitely  many primes in $K^c$.
\end{itemize}   For example, when $d=2$, we may take $K=\QQ(\mu_p)$, $K^\c=\QQ(\mu_{p^\infty})$ and $\Kinf$ the Kummer extension $\QQ\left(\mu_{p^\infty},\sqrt[p^\infty]{\alpha}\right)$,
where $\alpha\ne 0$ is an integer such that $p|\alpha$ or $p|| \alpha^{p-1}-1$ (such $\alpha$ is said to be \textit{amenable} for $p$ and this ensures that $p$ is totally ramified in $\Kinf$, see for example \cite[Proposition~2.4(i)]{lee} or \cite[Theorem~5.2 and Lemma~6.1]{viviani}). For a general $d$, we may take $\Kinf$ to be the multi-Kummer extension 
\[
\QQ\left(\mu_{p^\infty},\sqrt[p^\infty]{\alpha_1},\ldots, \sqrt[p^\infty]{\alpha_{d-1}}\right),
\] 
where $\alpha_i\ne 0$ are integers whose images  in $\QQ_p^\times/(\QQ_p^\times)^{p}$ are linearly independent over $\mathbb{F}_p$ and that the products $\alpha_1^{n_1}\cdots \alpha_{d-1}^{n_{d-1}}$, $0\le n_i\le p-1$, are all amenable (as this implies that $p$ totally ramifies in all cyclic sub-extensions of $\Kinf/K^c$).

We denote the Galois groups $G=\Gal(K_{\infty,\infty}/K)$, $H=\Gal(K_{\infty,\infty}/K^\c)$ and $\Gamma=\Gal(K^\c/K)$. In particular, we have the isomorphisms $G\cong H\rtimes \Gamma$, $H\cong\Zp^{\oplus(d-1)}$ and $\Gamma\cong\Zp$.

For $0\le m,n\le \infty$, we denote $H_m=H^{p^m}$, $\Gamma_n=\Gamma^{p^n}$, $\Gnm=H_m\rtimes \Gamma_n\le G$ and $\Knm=\Kinf^{\Gnm}$. We define $\Xnm$ to be the Hilbert $p$-class group of $\Knm$ and write $\enm$ for the $p$-exponent of $\#\Xnm$.

For any $p$-adic Lie group $L$, we shall write $\Lambda(L)$ for the Iwasawa algebra
\[
\Zp[[ L]]=\varprojlim_{N\unlhd_o L} \Zp[L/N].
\]
If $\cG$ is a pro-$p$ group and $x_1,\ldots,x_r\in\cG$, we shall write $\langle x_1,\ldots,x_r\rangle$ for the $p$-adic completion of the subgroup generated by the elements $x_1,\ldots,x_r$. Similarly, if $X_1,\ldots,X_r\subset \cG$, $\langle X_1,\ldots, X_r\rangle$ denotes the closed subgroup generated by $X_1,\ldots,X_r$.

\subsection{Main results}

Let $\X$ (respectively $\X'$) be the Galois group of the maximal abelian pro-$p$ extension of $\Kinf$ that is unramified everywhere (respectively unramified outside $p$). When $\Kinf/K$ is a $\Zp$-extension, a classical result of Iwasawa \cite{iwasawa} says that $\X$ is torsion over $\Lambda(\Gamma)$. Our first result is a generalization of this result.

\begin{thm}[Theorem~\ref{thm:torsion}]
The $\Lambda(G)$-module $\X$ is torsion.
\end{thm}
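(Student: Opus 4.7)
The strategy is to perform a descent from $\Kinf$ down to the distinguished $\Zp$-subextension $K^c/K$, where Iwasawa's classical theorem applies, and then to lift the torsion property back to $\Lambda(G)$ using topological Nakayama together with a homological rank comparison.

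First I would analyse the $H$-coinvariants $\X_H := \X/I_H\X$ via Galois theory. Writing $L$ for the maximal everywhere-unramified abelian pro-$p$ extension of $\Kinf$ and $L^c$ for its analogue over $K^c$, and setting $X(K^c) := \Gal(L^c/K^c)$, one observes that $L^c\cdot\Kinf\subseteq L$ (since unramifiedness is preserved by composition) and that $L^c\cap\Kinf=K^c$ (using that $\p$ is totally ramified in $\Kinf/K^c$). A snake-lemma diagram chase on the class-field-theoretic presentations of $\Gal(L/K^c)$ and $\Gal(L^c/K^c)$ should then yield an exact sequence of $\Lambda(\Gamma)$-modules of the form
\begin{equation*}
\bigoplus_{v\in S}I_v\longrightarrow \X_H\longrightarrow X(K^c)\longrightarrow 0,
\end{equation*}
where $S$ is the (finite) set of primes of $K^c$ ramifying in $\Kinf/K^c$ and $I_v\subseteq H$ is the inertia subgroup at a prime of $\Kinf$ above $v$.

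Second, I would invoke the hypotheses. Only finitely many primes of $K$ ramify in $\Kinf/K$ and each decomposes into finitely many primes of $K^c$, so $S$ is finite; each $I_v$ is a closed subgroup of $H\cong\Zp^{d-1}$, hence a finitely generated $\Zp$-module. Combined with the classical theorem of Iwasawa, which asserts that $X(K^c)$ is finitely generated and torsion over $\Lambda(\Gamma)$, the displayed sequence forces $\X_H$ to be finitely generated over $\Lambda(\Gamma)$. Since $I_H\Lambda(G)$ is a two-sided ideal of the compact local ring $\Lambda(G)$ (as $H\trianglelefteq G$) contained in its Jacobson radical, with $\Lambda(G)/I_H\Lambda(G)\cong \Lambda(\Gamma)$, topological Nakayama's lemma now gives that $\X$ is finitely generated as a $\Lambda(G)$-module.

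Third, to pass from finite generation to $\Lambda(G)$-torsion, I would apply the Howson--Venjakob Euler-characteristic formula
\begin{equation*}
\rank_{\Lambda(G)}\X \;=\; \sum_{i\ge0}(-1)^i\rank_{\Lambda(\Gamma)}H_i(H,\X).
\end{equation*}
The $i=0$ term vanishes by the previous step. For $1\le i\le d-1$, each $H_i(H,\X)$ is a $\Lambda(\Gamma)$-module whose torsion property can be deduced from the same descent analysis (exploiting that $H\cong\Zp^{d-1}$ has finite cohomological dimension and that the higher homology groups admit Galois-theoretic descriptions in terms of $\X$, inertia subgroups, and $X(K^c)$). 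Hence $\rank_{\Lambda(G)}\X=0$.

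\textbf{Main obstacle.} The most delicate part is producing the descent sequence in the first step with a precise accounting of the inertia contributions. The subtlety is that $\Kinf/K^c$ is itself ramified at primes above $p$, so the naive identification of $\X_H$ with a quotient of $X(K^c)$ fails; one must carefully follow the inertia at primes of $K^c$ above $\p$ (where the totally ramified hypothesis gives $I_{\p^c}=H$) as well as at the remaining ramified primes (where the finite-decomposition hypothesis keeps the error term controllable). Verifying the torsion of the higher $H_i(H,\X)$ in the third step is a secondary hurdle requiring either an inductive descent or an appeal to compatibility results of Jannsen type for continuous Galois cohomology.
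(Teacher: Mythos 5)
Your first two steps are essentially sound: the descent to $K^\c$ plus topological Nakayama is exactly how finite generation of $\X$ over $\Lambda(G)$ is usually proved (the paper simply quotes Perbet for this), with two caveats. First, the kernel of $\X_H\rightarrow X(K^\c)$ is generated by the $\X$-components of the inertia groups at the finitely many primes of $K^\c$ ramifying in $\Kinf$, not by subgroups $I_v\subseteq H$; this is harmless. Second, what you actually need later is not that $\X_H$ is finitely generated over $\Lambda(\Gamma)$ but that it is \emph{torsion}; this does follow from your sequence (the kernel is finitely generated over $\Zp$ and $X(K^\c)$ is $\Lambda(\Gamma)$-torsion by Iwasawa), but you should say so, since finite generation alone does not make the $i=0$ term of the Euler characteristic vanish.

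The genuine gap is your third step. Howson's formula computes the \emph{alternating} sum $\sum_{i\ge0}(-1)^i\rank_{\Lambda(\Gamma)}H_i(H,\X)$, and your descent controls only $H_0$. When $d=2$ this suffices, since the only other term $H_1(H,\X)=\X[h-1]$ enters with a negative sign and one gets $\rank_{\Lambda(G)}\X\le\rank_{\Lambda(\Gamma)}\X_H=0$; but Theorem~\ref{thm:torsion} is asserted for all $d\ge2$, and for $d\ge3$ the terms $H_2,H_4,\ldots$ enter with positive sign, so you genuinely need the higher homology groups to be $\Lambda(\Gamma)$-torsion. Your claim that this ``can be deduced from the same descent analysis'' because $H_i(H,\X)$ ``admit Galois-theoretic descriptions'' is unsupported: $\X$ is a class-group module, and there is no standard Galois-theoretic handle on its higher $H$-homology; supplying one is precisely the hard content missing here, and I do not see how to get it short of redoing the descent at infinitely many intermediate layers. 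That, in effect, is what the paper does instead: it avoids homology entirely, taking finite generation from Perbet, invoking Harris's asymptotic $\rank_{\Zp}\X/I_{G_{n,n}}\X=\rho\, p^{dn}+O(p^{(d-1)n})$, and then using class field theory at \emph{every} finite layer $K_{n,m}$ — namely the exact sequence \eqref{eq:SES} together with the bound $\rank_{\Zp}\Bnm/\IGnm\X=O(p^{(d-2)m})$ of Corollary~\ref{cor:rank} and the finiteness of $\X_{n,n}$ — to see that the left-hand side is $O(p^{(d-2)n})$, which forces $\rho=0$ for every $d$. So your route, as written, proves the theorem only for $d=2$; to repair it for $d\ge3$ you should replace the Euler-characteristic argument by rank estimates over the open subgroups $G_{n,n}$ as in the paper, or else prove torsionness of all $H_i(H,\X)$, which is not available by the tools you cite.
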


On studying the structure of $\X$ as a $\Lambda(H)$-module, we shall prove an asymptotic formula for $\enm$ with $n$ fixed and $m\rightarrow\infty$. 
\begin{thm}[Corollary~\ref{cor:growthH}]
For a fixed integer $n$, there exist integers $\mu_n$ and $\lambda_n$ such that 
\[
\enm= \mu_n\times p^{(d-1)m}+\lambda_n\times mp^{(d-2)m}+O(p^{(d-2)m})
\]
for $m\gg0$.
\end{thm}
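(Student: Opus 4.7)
The plan is to combine a control theorem relating $X_{n,m}$ to the large Iwasawa module $\X$ with the Cuoco--Monsky asymptotic formula for finitely generated torsion modules over the commutative Iwasawa algebra $\Lambda(H) \cong \Zp[[T_1,\ldots,T_{d-1}]]$. Using class field theory together with the ramification hypotheses (finitely many primes ramify in $\Kinf/K$ and $\p$ is totally ramified), I would first establish a control theorem of the shape
\[
X_{n,m} \cong \X/\Jnm\X
\]
valid up to kernel and cokernel of finite order bounded independently of $m$; here $\Jnm \subset \Lambda(\Gnm)$ is the left ideal generated by the augmentation $\IGnm$ together with $\sigma - 1$ for $\sigma$ running over the inertia subgroups at the finitely many ramified primes, with the totally ramified $\p$ contributing $\Ipnm$. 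This is the natural non-commutative analogue of Iwasawa's original control theorem for $\Zp$-extensions.

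Next, I would fix $n$ and reinterpret $\X/\Jnm\X$, up to bounded error, as a quotient of a single $\Lambda(H)$-module $\tX_n$ by the augmentation ideal $I_{H_m}$ of $\Lambda(H/H_m)$. Concretely, $\tX_n$ is built from $\X$ by absorbing the $m$-independent contributions to $\Jnm$---namely the $\Gamma_n$-part of the augmentation and the stable inertia at $\p$. The structural assumption on $\X$ made in the paper ensures that $\tX_n$ is a finitely generated torsion $\Lambda(H)$-module. Applying Cuoco--Monsky along the filtration $H \supset H_1 \supset H_2 \supset \cdots$ and setting $\mu_n := \mu(\tX_n)$, $\lambda_n := \lambda(\tX_n)$ then yields
\[
\log_p \lvert \tX_n / I_{H_m}\tX_n \rvert = \mu_n\, p^{(d-1)m} + \lambda_n\, m\, p^{(d-2)m} + O(p^{(d-2)m}).
\]

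To finish, one must show that the discrepancy between $\enm$ and this last quantity is absorbed into $O(p^{(d-2)m})$. This uses the finiteness of the control-theorem kernel and cokernel, together with the hypothesis that every ramified prime decomposes into only finitely many primes of $K^\c$, which ensures that the inertia contributions away from $\p$ assemble into $\Lambda(H)$-submodules of $\tX_n$ whose coinvariants along $H \supset H_m$ grow at most like $O(p^{(d-2)m})$.

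The main obstacle I anticipate is the reduction from $\X/\Jnm\X$ (a quotient over the non-commutative ring $\Lambda(\Gnm)$) to $\tX_n/I_{H_m}\tX_n$ (a quotient over the commutative ring $\Lambda(H)$): since $\Gamma_n$ is not normal in $\Gnm = H_m \rtimes \Gamma_n$, naively killing $\Gamma_n$ does not yield an $H$-module structure, and the semidirect-product action must be unwound with care. The totally ramified prime $\p$ is especially delicate, as its inertia interacts non-trivially with both the $H$- and $\Gamma$-directions; one has to verify that the $m$-dependent part of $\Ipnm$ is small enough to be absorbed into the error term.
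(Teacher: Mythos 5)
Your proposal has the right general flavour (class field theory to express $\Xnm$ through $\X$, then Cuoco--Monsky over $\Lambda(H)$), but the central reduction step contains a genuine gap. You want to replace the true quotient by the bare augmentation quotient $\tX_n/I_{H_m}\tX_n$ and absorb the inertia contributions at $\nu\in\Sigma\setminus\{\p\}$ into $O(p^{(d-2)m})$ on the grounds that $\Knm$ has only $O(p^{(d-2)m})$ primes above such $\nu$. This conflates a rank bound with a bound on torsion orders. What class field theory gives (exactly, with no bounded-error control theorem needed: Lemma~\ref{lem:CFT} uses total ramification of $\p$, and the $\p$-inertia lands entirely inside $\IGnm\X$) is the exact sequence $0\to\Bnm/\IGnm\X\to\X/\IGnm\X\to\Xnm\to0$; knowing that $\Bnm/\IGnm\X$ has $\Zp$-rank $O(p^{(d-2)m})$ (Corollary~\ref{cor:rank}) does \emph{not} control the discrepancy between $e(\Xnm)$ and the torsion order of $\X/\IGnm\X$ -- a rank-one submodule such as $p^N\Zp\subset\Zp$ yields a finite quotient of arbitrarily large order. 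Moreover, the Cuoco--Monsky asymptotic you invoke is not a statement about $e(M/I_{H_m}M)$ at all (that quotient need not even be finite here); it is a statement about $e(M/A_m(\cS))$ for an \emph{admissible structure} $\cS$, and it is precisely the inertia terms $\Phi_{m/m_0}(k_\nu)\Lambda(H)\cdot x_\nu$ that must be kept inside the ideal so that the quotient -- which is $\Xnm$ itself, hence finite -- makes the structure admissible. The content of their theorem (Theorem~\ref{thm:CM}) is that $\mu_H$ and $\lambda_H$ are independent of the admissible structure chosen; that is what makes the inertia away from $\p$ harmless for the main terms, and it cannot be replaced by a crude ``absorb into the error term'' estimate.

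For comparison, the paper's route is: (i) $\Xnm\cong\X/\Bnm$ exactly (Lemma~\ref{lem:CFT}); (ii) the explicit description $\Bnm=\IGnm\X+\sum_\nu\Phi_{m/m_0}(k_\nu)\Lambda(H)\cdot x_\nu$ for $m\ge m_0$, coming from the one-dimensionality of the inertia groups at $\nu\ne\p$ (Proposition~\ref{prop:describeBnm} and Lemma~\ref{lem:structure}); in particular your worry about the ``$m$-dependent part of $\Ipnm$'' does not arise, since $\Bnm/\IGnm\X$ is generated solely by contributions from $\nu\ne\p$; (iii) the fact that $\X_{\Gamma_n}$ is torsion over $\Lambda(H)$ is \emph{proved} from the rank bound and the finiteness of $\Xnm$ (Lemma~\ref{lem:torsion}), not assumed -- note that no structural hypothesis on $\X$ (such as $\X\in\MHG$ or finite generation over $\Lambda(H)$) enters this corollary, contrary to what your proposal suggests; and (iv) Cuoco--Monsky applied to the admissible structure $\cS=\{m_0,(k_\nu,\Lambda(H)\cdot x_\nu)\}$, which gives the asymptotics for $\enm=e(\X_{\Gamma_n}/A_m(\cS))$ directly. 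To repair your argument you would have to keep the inertia terms inside the ideal and use the structure-theoretic form of Cuoco--Monsky, rather than comparing with the bare $I_{H_m}$-coinvariants.
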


In other words, this gives us the asymptotic growth of the class numbers in the $H$-direction. In the example above, this tells us how the size of the $p$-primary part of the ideal class group of  the  extension
\[
\QQ\left(\mu_{p^n},\sqrt[p^m]{\alpha_1},\cdots,\sqrt[p^m]{\alpha_{d-1}}\right)
\] 
varies as $m\rightarrow \infty$. In particular, these extensions are not Galois in general.
This is analogous to the main result of \cite{cuocomonsky} for Galois extensions of number fields whose Galois groups are isomorphic to direct sums of $\Zp$. In fact, our proof relies heavily on the analysis of torsion $\Lambda(H)$-modules in Cuoco and Monsky's work.

Let $\MHG$ be the category of finitely generated $\Lambda(G)$-modules $M$ such that $M/M(p)$ is finitely generated over $\Lambda(H)$, where $M(p)$ denotes the submodule of $\Zp$-torsions inside $M$. 
In non-commutative Iwasawa theory studied by Coates, Fukaya, Kakde, Kato, Ochi, Ritter, Sujatha, Venjakob, Weiss and many others (c.f. \cite{CKFVS,kakde,ochivenj,RW,venjakob02,venjakobcompo}), $\X'$ is conjectured to be inside $\MHG$ for totally real fields. Since $\X$ is a quotient of $\X'$, this would imply that $\X\in\MHG$ as well. When $K^c/K$ is the cyclotomic $\Zp$-extension, Iwasawa \cite{iwasawa,iwasawa73} conjectured that the $\mu$-invariant associated to this extension  vanishes (this is a theorem of Ferrero-Washington \cite{FW} when $k/\QQ$ is an abelian extension). This conjecture turns out to be equivalent to $\X$ itself (not just $\X/\X(p)$) being finitely generated over $\Lambda(H)$ (see Theorem~\ref{thm:MHG} below as well as  \cite[Lemma~3.3]{limfine} and \cite[Lemma~3.2]{CSfine} for the same result in different settings).

Our second result is an asymptotic formula for $e_{n,n}$ as $n\rightarrow\infty$ when $d=2$ and $\X$ is finitely generated over $\Lambda(H)$. 

\begin{thm}[Corollary~\ref{cor:main}]
Suppose that $d=2$ and  $\X$ is finitely generated over $\Lambda(H)$.  If the unique prime of $K$ above $p$ is totally ramified in $\Kinf$,  then 
\[
e_{n,n}=\tau\times np^{n}+O(p^{n}),
\]
where $\tau=\rank_{\Lambda(H)}\X$.
\end{thm}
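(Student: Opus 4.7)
The plan is to reduce $e_{n,n}$ to the $p$-exponent of the coinvariants $\X_{G_{n,n}}$, then to compute the latter via the two-step decomposition $\X_{G_{n,n}} = (\X_{H_n})_{\Gamma_n}$. Since $\p$ is totally ramified in $\Kinf$, a standard descent argument (of the type used in the preceding sections of the paper) gives $e_{n,n} = v_p(|\X_{G_{n,n}}|) + O(1)$. The decomposition is valid because $H_n$ is characteristic in $H$ (hence normal in $G$), and, using the conjugation relation $\gamma h \gamma^{-1} = h^{\chi(\gamma)}$ together with $\chi(\Gamma_n) \subseteq 1 + p^n\Zp$, the image of $\Gamma_n$ is normal in $G/H_n$.

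Set $M_n := \X_{H_n}$. The hypothesis that $\X$ is finitely generated over $\Lambda(H)$ together with Theorem~\ref{thm:MHG} forces $\X(p) = 0$, and the structure theorem over $\Lambda(H) = \Zp[[Y]]$ yields a pseudo-isomorphism $\X \sim \Lambda(H)^\tau \oplus T$ with $T$ a finitely generated $\Lambda(H)$-torsion module satisfying $\mu_H(T) = 0$. Modding out by the non-zero-divisor $\omega_n^H := (1+Y)^{p^n}-1$, we obtain $\rank_{\Zp} M_n = \tau p^n$ with a finite torsion subgroup of $p$-exponent $O(n)$. Hence, as a $\Lambda(\Gamma)$-module with its descended $\Gamma$-action, $M_n$ is finitely generated, torsion, and satisfies $\mu_\Gamma(M_n) = 0$ and $\lambda_\Gamma(M_n) = \tau p^n$.

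Iwasawa's asymptotic formula for torsion $\Lambda(\Gamma) = \Zp[[T]]$-modules gives
\[
v_p\bigl(|(M_n)_{\Gamma_k}|\bigr) \;=\; \lambda_\Gamma(M_n)\cdot k \;+\; \nu(M_n) \;+\; \varepsilon_n(k),
\]
where $\nu(M_n)$ is the $\nu$-constant and $\varepsilon_n(k) \to 0$ for $k$ sufficiently large (depending on $M_n$). A direct computation identifies $\nu(M_n)$, up to an $O(n)$-correction from the torsion part of $M_n$, with $v_p(|(M_n)_\Gamma|) = v_p(|\X_{G_{0,n}}|)$. Applying Corollary~\ref{cor:growthH} with first index fixed at $0$, we obtain $v_p(|\X_{G_{0,n}}|) = \mu_0 p^n + \lambda_0 n + O(1) = O(p^n)$. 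Setting $k = n$ and bounding $\varepsilon_n(n) = O(p^n)$ uniformly then yields
\[
v_p\bigl(|(M_n)_{\Gamma_n}|\bigr) \;=\; \tau n p^n \;+\; O(p^n),
\]
and hence $e_{n,n} = \tau n p^n + O(p^n)$.

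The principal obstacle is the uniform bound $\varepsilon_n(n) = O(p^n)$. Concretely, this amounts to verifying that the eigenvalues $\zeta$ of $\gamma$ acting on $M_n \otimes \QQ_p$ satisfy $v_p(\zeta^{p^n}-1) = n + v_p(\zeta-1)$ up to a collective error of $O(p^n)$, equivalently that $\det(\gamma^{p^n}-1 \mid M_n \otimes \QQ_p) = \prod_\zeta (\zeta^{p^n}-1)$ grows as expected. The finiteness of $\X_{n,n}$ excludes eigenvalues that are themselves $p^n$-th roots of unity, while Corollary~\ref{cor:growthH} applied to the first layer controls the contribution of the remaining root-of-unity-like eigenvalues; handling these two phenomena together constitutes the technical core of the argument.
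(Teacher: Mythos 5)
Your overall two-step strategy (descend in the $H$-direction first, then apply an Iwasawa-type $\Gamma$-direction formula at level $n$ with a uniform threshold) is broadly the paper's, but two of your central steps have genuine gaps. First, the opening reduction $e_{n,n}=v_p(\#\X_{G_{n,n}})+O(1)$ is not a ``standard descent argument'' available here: the exact sequence \eqref{eq:SES} exhibits $\X_{n,n}$ as the quotient of $\X_{G_{n,n}}=\X/I_{G_{n,n}}\X$ by the image of $B_{n,n}/I_{G_{n,n}}\X$, and Corollary~\ref{cor:rank} controls only the $\Zp$-\emph{rank} of that term (which is $O(1)$ for $d=2$), not its torsion, nor the index with which its rank-one pieces sit inside $\X_{G_{n,n}}$. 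A priori this discrepancy can be of the same order as the main term $\tau np^n$, so your reduction yields at best an upper bound; this is precisely why the paper, under the weaker hypothesis $\X\in\MHG$, only proves an upper bound for $\tenn$ via $\X_{G_{n,n}}$ (Corollary~\ref{cor:upper}), while for the asymptotic \emph{equality} it descends not from $\X_{H_m}$ but from $M_m=\X/B_{\infty,m}$, the class group of $K_{\infty,m}$, for which class field theory gives the exact identification $(M_m)_{\Gamma_n}\cong\X_{n,m}$ with no error term: the inertia contributions are absorbed \emph{before} the $\Gamma$-descent. The same conflation of coinvariants with class groups reappears when you identify $(M_n)_\Gamma=\X_{G_{0,n}}$ with the object controlled by Corollary~\ref{cor:growthH}, which bounds $e_{0,n}=e(\X_{0,n})$, not $e(\X_{G_{0,n}})$.

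Second, the step you yourself flag as ``the principal obstacle,'' namely the uniform bound $\varepsilon_n(n)=O(p^n)$ (equivalently, that Iwasawa's formula for the $n$-th module is already valid at level $n$, and that its $\nu$-constant is comparable to the level-zero value up to a controlled error), is exactly where new input is required, and you supply no argument for it. The paper resolves this by Proposition~\ref{prop:gammagrowth}, which makes the threshold explicit ($p^{n-1}(p-1)$ must exceed the degrees of the irreducible factors of the characteristic ideal), together with the result quoted from \cite[Corollary~A.4]{DLsha}, which under the $\Lambda(H)$-finite-generation hypothesis bounds those degrees uniformly in $m$, so that a single $n_0$ works for all $m$; the maximal finite submodules are controlled separately by Proposition~\ref{prop:finitesub} ($e(M_m')=O(p^m)$). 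Without these (or equivalent) uniform estimates your application of the formula at $k=n$ is not established, so the proof is incomplete at its core. Finally, your assertion that finite generation of $\X$ over $\Lambda(H)$ plus Theorem~\ref{thm:MHG} forces $\X(p)=0$ (hence $\mu_H=0$) is false: Theorem~\ref{thm:MHG} only says the hypothesis is equivalent to $\X_{\infty,0}$ being finitely generated over $\Zp$, and, e.g., a $\Lambda(H)/p$-summand is compatible with it. This slip merely degrades your torsion estimate from $O(n)$ to $O(p^n)$, which the final error term would absorb, but as written the deduction is incorrect.
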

We remark that our theorem \textit{always} applies when $\Kinf=\QQ(\mu_{p^\infty},\sqrt[p^\infty]{\alpha})$ for some integer  $\alpha$ that is amenable for $p$ since the theorem of Ferrero-Washington tells us that our hypothesis on $\X$ holds. In particular, it confirms  the prediction made by Venjakob \cite[\S8]{venjakobweier} for the extension $\QQ(\mu_{p^\infty},\sqrt[p^\infty]{p})$.
Our result can also be seen as a generalization of the classical result of Iwasawa \cite{iwasawa} on $\Zp$-extensions in the special case that the $\mu$-invariant vanishes. If $G$ is abelian, that is $G\cong\Zp^2$, we recover the main result of \cite{cuocomonsky} again in the case when the $\mu$-invariant is $0$ (denoted by $m_0$  in \textit{loc. cit.}).

In \cite{perbet}, Perbet studied the variation of class numbers when $G$ is a general $d$-dimensional $p$-adic Lie group with no $p$-torsion, without our assumption on $\X$ being finitely generated over $\Lambda(H)$ nor any assumption on the ramification of $p$. More precisely, if $\tenn$ denotes the $p$-exponent of $\#\X_{n,n}/p^n$ (rather than $\X_{n,n}$ itself), Perbet showed that
\begin{equation}\label{eq:perbet}
\tenn= \rho\times n p^{dn}+\mu\times p^{dn}+O(np^{(d-1)n}),
\end{equation}
where $\rho=\rank_{\Lambda(G)}\X$ and $\mu$ is the $\mu$-invariant of $\X$ as defined in \cite{venjakob02}. Under our assumption that $\X$ is finitely generated over $\Lambda(H)$, both $\rho$ and $\mu$ vanish. In this case, the formula of Perbet becomes simply $O(np^{(d-1)n})$. Our formula in Corollary~\ref{cor:main} is therefore slightly more precise. We shall show at the end of this article that our method yields an upper bound of $\tenn$  in the case $d=2$ and $\X\in \MHG$ (the constant $\rho$ would be $0$, but $\mu$ may be non-zero).

\begin{thm}[Corollary~\ref{cor:upper}]
If $d=2$ and $\X\in\MHG$, then 
\[
\tenn\le \mu\times p^{2n}+\tau\times np^n+O(p^n),
\]
where $\tau=\rank_{\Lambda(H)}\X/\X(p)$.
\end{thm}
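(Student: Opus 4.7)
The plan is to decompose $\X$ via the tautological short exact sequence
\[
0 \longrightarrow \X(p) \longrightarrow \X \longrightarrow \X/\X(p) \longrightarrow 0,
\]
estimate the contribution of each outer term to $\tenn$, and recombine using right-exactness of $\Gnm$-coinvariants. Combining this with the comparison between $\Xnm/p^n$ and $(\X)_{\Gnm}/p^n$ that was established in the course of proving Corollary~\ref{cor:main}, the problem reduces to bounding $\#\bigl((\X(p))_{\Gnm}/p^n\bigr)$ and $\#\bigl((\X/\X(p))_{\Gnm}/p^n\bigr)$ separately, with $n=m$.

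For the quotient $\X/\X(p)$, which by hypothesis is finitely generated over $\Lambda(H)$ of $\Lambda(H)$-rank $\tau$, I would simply apply the proof of Corollary~\ref{cor:main} verbatim: the argument there uses only the finite generation over $\Lambda(H)$ as a hypothesis on its input module, and it yields
\[
\ord_p\#\bigl((\X/\X(p))_{\Gnm}/p^n\bigr) \le \tau\cdot np^n + O(p^n).
\]
For the $\Zp$-torsion part $\X(p)$, I would invoke the structure theorem for finitely generated torsion $\Lambda(G)$-modules of \cite{venjakob02} to obtain a pseudo-isomorphism $\X(p)\sim \bigoplus_i \Lambda(G)/p^{k_i}$ with $\sum_i k_i=\mu$. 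Each summand has $\Gnm$-coinvariants isomorphic to $\Zp[G/\Gnm]/p^{k_i}$; quotienting further by $p^n$ yields a group of $p$-exponent $\min(k_i,n)\cdot p^{2n}$, which equals $k_i\cdot p^{2n}$ once $n\ge\max_i k_i$. Summing over $i$ gives the leading contribution $\mu\cdot p^{2n}$.

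The main obstacle will be controlling the error introduced by the pseudo-null kernel and cokernel of the above pseudo-isomorphism. In the non-commutative setting pseudo-null modules need not be finite, and a priori their $\Gnm$-coinvariants modulo $p^n$ could grow faster than $O(p^n)$. However, since $\dim G=2$, such modules become finitely generated over a rank-one Iwasawa subalgebra after killing a suitable regular element, and this should suffice to bound their contribution by $O(p^n)$. Combining these estimates yields
\[
\tenn \le \mu\cdot p^{2n}+\tau\cdot np^n+O(p^n),
\]
as required.
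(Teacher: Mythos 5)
Your proposal follows essentially the same route as the paper's proof (Proposition~\ref{prop:estimateXGnn} combined with Corollary~\ref{cor:upper}): reduce $\tenn$ to $\X_{\Gnm}/p^n$ via \eqref{eq:SES}, split along $0\to\X(p)\to\X\to\tX\to0$ and use right-exactness of coinvariants, extract $\mu_G(\X)\times p^{2n}$ from the structure theorem $\X(p)\sim\bigoplus_i\Lambda(G)/p^{k_i}$, and extract $\tau\times np^n$ from the finite generation of $\tX$ over $\Lambda(H)$. Two points, however, are not actually carried out. First, the step you correctly identify as the main obstacle --- controlling the pseudo-isomorphism error for the $\Zp$-torsion part --- is left as a heuristic (``killing a suitable regular element ... should suffice''); this is precisely where the paper invokes \cite[Lemma~4.2]{DLsha}, which states that pseudo-isomorphic $\Zp$-torsion $\Lambda(G)$-modules $M\sim N$ satisfy $\#M_{G_{n,n}}=\#N_{G_{n,n}}\times p^{O(p^n)}$. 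Your sketch is in the right spirit, but as written it is not a proof, and note the error enters not only through coinvariants of the pseudo-null kernel and cokernel but also through $H_1$-terms in the associated exact sequences, which your outline does not address.

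Second, the claim that the proof of Corollary~\ref{cor:main} applies ``verbatim'' to the abstract module $\tX$ is too loose: that proof uses the specific arithmetic structure of $\X$ (finiteness of the class groups $\Xnm$, the inertia-theoretic description $B_{\infty,m}=\Phi_{m/m_0}(h)B_{\infty,m_0}$, and Corollary~\ref{cor:growthH} for the base case), not merely finite generation over $\Lambda(H)$. What is actually needed, and what the paper does, is to rerun the estimates of Propositions~\ref{prop:growthlambdamu} and \ref{prop:finitesub} for $\tX_{H_n}$ and then apply Theorem~\ref{thm:iwasawa}, yielding $e(\tX_{G_{n,n}})=\tau\times np^n+O(p^n)$ for the torsion exponent only. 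Related to this, since you bound $\#\bigl(\tX_{\Gnm}/p^n\bigr)$ rather than the torsion exponent, you must also control the $\Zp$-free rank of the coinvariants: a priori $\rank_{\Zp}\tX_{G_{n,n}}$ could be of order $p^n$, which would add another $np^n$-term and spoil the constant $\tau$. This rank is in fact $O(1)$, because $\tX_{G_{n,n}}$ is a quotient of $\X_{G_{n,n}}$, whose rank is $O(1)$ by \eqref{eq:SES}, Corollary~\ref{cor:rank} and the finiteness of $\X_{n,n}$ --- this is exactly how the paper handles the free part in Corollary~\ref{cor:upper} --- but your write-up should say so explicitly.
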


\subsection*{Acknowledgment}
We would like to thank Daniel Delbourgo, Dohyeong Kim and Bharathwaj Palvannan for very informative discussions during the preparation of this paper. We are also indebted to the anonymous referees for their valuable comments and suggestions which led to many improvements in the paper.

\section{Preliminary results}

\subsection{Ramification groups and class groups}\label{S:ramification}
 Let $\Sigma$ be the set of primes of $K$ that ramify in $\Kinf$. In particular, $\p\in\Sigma$. Since $\p$ is assumed to be totally ramified in $\Kinf$, its (unique) decomposition group inside $G$ is $G$ itself.

If $\nu\in\Sigma$, we have assumed that there are only finitely many primes in $K^\c$ lying above $\nu$.  On replacing $K$ by $K_{n,0}$ if necessary, we may assume that $\nu$ is inert in $K^\c$. In particular, if $\nu_1$ and $\nu_2$ are two primes of $\Kinf$ lying above $\nu$, then they differ by an element in $H$.

Let $\M$ be the maximal  unramified abelian pro-$p$ extension of $\Kinf$ and write $\X=\Gal(\M/\Kinf)$ and $\Y=\Gal(\M/K)$. 
Note that $\X$ is normal in $\Y$ with $\Y/\X\cong G$. For each $g\in G$, let $\tilde{g}\in\Y$ be a lifting of $g$. If $x\in\X$, we have the action $x^g=\tilde{g}^{-1}x\tilde{g}$. This turns $\X$ into a $\Lambda(G)$-module. We recall from \cite[Proposition~3.1]{perbet} that $\X$ is a finitely generated $\Lambda(G)$-module.

For each $\nu\in\Sigma$, we fix $\bar \nu$ a prime of $\M$ above $\nu$ and write $I_\nu$ for the inertia group of $\bar \nu$ inside $\Y$. We note that $I_\p$ is isomorphic to $G$ since we assume that $\p$ is totally ramified in $\Kinf$ and it is unramified in $\M$. In particular, we have the isomorphism
\begin{equation}\label{eq:semi}
\Y\cong\X\rtimes G,
\end{equation}
where we identify $G$ with $I_\p$. Each element of $\Y$ may be written as $(x,g)$ for some $x\in\X$ and $g\in G$. Note in particular that under this identification, we have the equality
\begin{equation}\label{eq:Ip}
I_{\p}=\{(1,g):g\in G\}.
\end{equation}

 For $0\le m,n\le \infty$, we define $\Y_{n,m}=\Gal(\M/K_{n,m})$. For each $\nu\in\Sigma$, we write $\Inm$ for the inertia group of our choice of $\bar\nu$ inside $\Ynm$ and $\bInm$ its image under the natural projection $\Ynm\rightarrow\Gnm$. We note that $\bInm\cong\Inm$ since the extension $\M/\Kinf$ is unramified.
 
 Since $\X$ is normal in $\Y$, it is also normal in $\Ynm$. Consequently, $[\X,\Ynm]$ is normal in $\Ynm$ and we may consider the quotient $\Ynm/[\X,\Ynm]$.

 \begin{lemma}\label{lem:normalized}
The image of $\Ipnm$ in the quotient $\Ynm/[\X,\Ynm]$ is normal. That is,
\[
\langle [\X,\Ynm],\Ipnm\rangle/[\X,\Ynm]\unlhd \Ynm/[\X,\Ynm].
\]
\end{lemma}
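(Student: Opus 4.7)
The plan is to work with the explicit semidirect-product description $\Y\cong\X\rtimes I_\p$ from \eqref{eq:semi}--\eqref{eq:Ip}. Since the paragraph just before the lemma already shows that $[\X,\Ynm]$ is normal in $\Ynm$, it suffices to check that for every $y\in\Ynm$ and every $i\in\Ipnm$,
\[
y\, i\, y^{-1}\;\in\;\langle [\X,\Ynm],\Ipnm\rangle.
\]

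First, I would parametrise the elements. By \eqref{eq:Ip}, elements of $\Ipnm$ have the form $(1,h)$ with $h\in\Gnm$, and elements of $\Ynm$ have the form $(x,g)$ with $x\in\X$ and $g\in\Gnm$. Writing $(x,g)=x\tilde g$ with the lift $\tilde g=(1,g)\in I_\p$, and using that $I_\p$ is a subgroup of $\Y$, I would compute directly in $\Y$:
\[
y\, i\, y^{-1}\;=\;x\,\tilde g\,\tilde h\,\tilde g^{-1}\,x^{-1}\;=\;x\,\tilde\gamma\,x^{-1},
\]
where $\gamma:=ghg^{-1}$. Because $\Gnm$ is a subgroup of $G$, we have $\gamma\in\Gnm$, and consequently $\tilde\gamma=(1,\gamma)\in\Ipnm$. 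Note that only the subgroup property of $\Gnm$ is being used here, not its normality in $G$.

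Next, I would separate the $\Ipnm$-factor from the $\X$-factor by inserting a commutator:
\[
x\,\tilde\gamma\,x^{-1}\;=\;\bigl(x\,\tilde\gamma\,x^{-1}\,\tilde\gamma^{-1}\bigr)\cdot\tilde\gamma.
\]
The rightmost factor lies in $\Ipnm$. Using the paper's convention $x^g=\tilde g^{-1}x\tilde g$, the other factor is precisely the commutator $[x^{-1},\tilde\gamma^{-1}]$ of $x^{-1}\in\X$ with $\tilde\gamma^{-1}\in\Ipnm\subseteq\Ynm$, hence lies in $[\X,\Ynm]$. Combining these observations yields $y\, i\, y^{-1}\in\langle[\X,\Ynm],\Ipnm\rangle$, as required.

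The argument should be entirely computational. I do not anticipate a serious obstacle: the only care needed is to carry out the semidirect-product multiplication correctly, and to recognise $x\,\tilde\gamma\,x^{-1}\,\tilde\gamma^{-1}$ as a commutator living in $[\X,\Ynm]$.
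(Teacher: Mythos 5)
Your proof is correct and takes essentially the same route as the paper: both arguments conjugate an element $(1,g)\in\Ipnm$ by an arbitrary element of $\Ynm$ inside the semidirect product $\Y\cong\X\rtimes I_\p$ and observe that the result is an element of $[\X,\Ynm]$ times an element of $\Ipnm$. Your multiplicative bookkeeping $x\tilde\gamma x^{-1}=[x^{-1},\tilde\gamma^{-1}]\,\tilde\gamma$ is just a rewriting of the paper's coordinate computation $(x,h)^{-1}(1,g)(x,h)=(x^{(g-1)h^{-1}},h^{-1}gh)$ together with the observation $x^{g-1}\in[\X,\Ynm]$.
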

\begin{proof}
Let $(1,g)\in \Ipnm$ (which makes sense thanks to \eqref{eq:Ip}) and $(x,h)\in\Ynm$. Then 
\[
(x,h)^{-1}(1,g)(x,h)=(x^{(g-1)h^{-1}},h^{-1}gh).
\]
Note that $x^{g-1}=x^{-1}\tilde g^{-1}x\tilde g\in[\X,\Ynm]$, hence the result.
\end{proof}

 Let  $C_{n,m}$ the subgroup of $\Y_{n,m}$ generated by $[\Y_{n,m},\Y_{n,m}]$ and all the inertia groups $\Inm^\sigma$, for $\nu\in\Sigma$ and $\sigma\in \X\rtimes H$. This contains all the inertia groups inside $\Ynm$ since any two primes of $\M$ lying above $\nu$ differ by an element in $\X\rtimes H$. 
  Finally, we define $\Bnm=\Cnm\cap\X$.  Recall from the introduction that $\Xnm$ is defined to be the Hilbert $p$-class group of $\Knm$. It may be described as follows.

 \begin{lemma}\label{lem:CFT}
We have the isomorphism $\Xnm\cong\X/\Bnm$.
 \end{lemma}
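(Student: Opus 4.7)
The plan is to invoke class field theory to identify $\Xnm$ with the Galois group of the Hilbert $p$-class field $L_{n,m}$ of $\Knm$, show that $L_{n,m}$ is exactly the fixed field $\M^{\Cnm}$, and then use the total ramification of $\p$ to prove $\Ynm=\X\cdot\Cnm$. The desired isomorphism will then follow from the second isomorphism theorem.

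First I would show that $L_{n,m}$, the maximal unramified abelian pro-$p$ extension of $\Knm$, is contained in $\M$. Since $L_{n,m}/\Knm$ is unramified and abelian, the compositum $L_{n,m}\Kinf$ is an unramified abelian pro-$p$ extension of $\Kinf$: unramifiedness is preserved under compositum, and $\Gal(L_{n,m}\Kinf/\Kinf)$ is a quotient of $\Gal(L_{n,m}/\Knm)$ and hence still abelian. Therefore $L_{n,m}\Kinf\subseteq\M$, so $L_{n,m}\subseteq\M$.

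Next, setting $\tilde L:=\M^{\Cnm}$, I would verify $L_{n,m}=\tilde L$. For $\tilde L\subseteq L_{n,m}$, the quotient $\Ynm/\Cnm$ is abelian since $\Cnm\supseteq[\Ynm,\Ynm]$, and $\tilde L/\Knm$ is unramified because $\Cnm$ contains the inertia subgroup in $\Ynm$ of every prime of $\M$ lying above each $\nu\in\Sigma$---this is exactly the content of the preamble to the lemma, which observes that any two primes of $\M$ above a fixed $\nu\in\Sigma$ are related by an element of $\X\rtimes H$, precisely the set of conjugating elements allowed in the definition of $\Cnm$; primes outside $\Sigma$ are automatically unramified since $\Kinf/K$ is unramified there and $\M/\Kinf$ is unramified. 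For the opposite inclusion, $\Gal(\M/L_{n,m})\subseteq\Ynm$ must contain $[\Ynm,\Ynm]$ and every $\Inm^{\sigma}$ with $\sigma\in\X\rtimes H$, so it contains $\Cnm$. Class field theory then yields $\Xnm\cong\Gal(L_{n,m}/\Knm)=\Ynm/\Cnm$.

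Finally, since $\p$ is totally ramified in $\Kinf/K$ it is also totally ramified in $\Kinf/\Knm$, and combined with $\M/\Kinf$ being unramified this shows that $\Ipnm$ surjects onto $\Gnm=\Ynm/\X$, giving $\Ynm=\X\cdot\Ipnm\subseteq\X\cdot\Cnm\subseteq\Ynm$, so $\Ynm=\X\cdot\Cnm$. Because $L_{n,m}/\Knm$ is Galois, $\Cnm$ is normal in $\Ynm$, and the second isomorphism theorem gives
\[
\Xnm\cong\Ynm/\Cnm=(\X\cdot\Cnm)/\Cnm\cong\X/(\X\cap\Cnm)=\X/\Bnm.
\]
The main obstacle I anticipate is the middle step: carefully checking that $\Cnm$, as defined using only conjugates by $\sigma\in\X\rtimes H$, really captures the inertia of \emph{every} prime of $\M$ above every $\nu\in\Sigma$ (and not merely those associated with the fixed lifts $\bar\nu$). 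This is precisely where the preparatory observation that primes of $\M$ above a given $\nu$ are related by elements of $\X\rtimes H$ does essential work.
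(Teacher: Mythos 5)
Your proof is correct and follows essentially the same route as the paper: class field theory gives $\Xnm\cong\Ynm/\Cnm$, total ramification of $\p$ forces $\bInm[\p]$ -- i.e.\ $\bar I_{\p_{n,m}}=\Gnm$, equivalently $\Ynm=\X\Cnm$ -- and the isomorphism theorem then yields $\X/\Bnm\cong\Xnm$; the paper packages the last step as the vanishing of the third term in the exact sequence $1\to\X/\Bnm\to\Xnm\to\Ynm/\X\Cnm\to1$, which is the same argument as your second-isomorphism-theorem computation. Your extra verification that $\M^{\Cnm}$ is the Hilbert $p$-class field of $\Knm$ just makes explicit what the paper subsumes under ``class field theory tells us''.
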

 \begin{proof}
Class field theory tells us that 
 \[
 \Xnm\cong\Ynm/\Cnm.
 \]
 By the isomorphism theorem, we have  $\X/\Bnm\cong\X\Cnm/\Cnm$. This gives the short exact sequence
 \[
 1\rightarrow\X/\Bnm\rightarrow\Xnm\rightarrow\Ynm/\X\Cnm\rightarrow 1.
 \]
 Recall that $\Ynm/\X\cong \Gnm$, the last term of the short exact sequence can be described by
 \[
 \Ynm/\X\Cnm\cong \Gnm/{\langle[\Gnm,\Gnm],\bInm^\sigma:\nu\in\Sigma,\sigma\in H\rangle}.
 \]
But $\p\in\Sigma$ and $\bar I_{\p_{n,m}}=\Gnm$ since $\p$ is totally ramified in $\Kinf$. Hence, this quotient is trivial and the result follows.
 \end{proof}

In particular, this gives us the following short exact sequence:
\begin{equation}\label{eq:SES}
0\rightarrow \Bnm/\IGnm\X \rightarrow\X/\IGnm\X\rightarrow\Xnm\rightarrow0.
\end{equation}
\subsection{Description of $\Bnm$}\label{S:estimateBnm}
We write $\IGnm$ for the augmentation ideal of $\Gnm$ in $\Lambda(G)$, that is the ideal generated by $g-1$, $g\in\Gnm$. We have the following description.

\begin{lemma}\label{lem:augmentcommut}
We have the equality $$[\X,\Ynm]=\IGnm\X.$$
\end{lemma}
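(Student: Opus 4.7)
The plan is to compute $[x,y]$ explicitly for $x\in\X$ and $y\in\Ynm$ using the semidirect product decomposition $\Y\cong\X\rtimes G$ of~\eqref{eq:semi}, and then identify the resulting closed subgroup of $\X$ with $\IGnm\X$.

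First I would note that since $\X$ is abelian and normal in $\Ynm$, every commutator already lies in $\X$, and the conjugation action of $y\in\Ynm$ on $\X$ depends only on the image $g\in\Gnm$ of $y$ under $\Ynm\twoheadrightarrow\Gnm$ (the subgroup $\X\subseteq\Ynm$ acts trivially on $\X$ because $\X$ is abelian). Writing $x=(x,1)$ and $y=(x_0,g)$ under~\eqref{eq:semi}, the same sort of direct manipulation in $\X\rtimes G$ as in the proof of Lemma~\ref{lem:normalized} yields the key identity
\[
[x,y] \;=\; x^{-1}\cdot(y^{-1}xy) \;=\; x^{-1}\cdot x^{g} \;=\; x^{g-1} \;=\; (g-1)\cdot x,
\]
where the last expression is in additive notation for the $\Lambda(G)$-module structure on $\X$.

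From this identity, $[\X,\Ynm]$ is the closed subgroup of $\X$ generated by $\{(g-1)\cdot x : g\in\Gnm,\ x\in\X\}$, which is exactly a generating set for $\IGnm\X$ as a $\Lambda(G)$-submodule. The inclusion $[\X,\Ynm]\subseteq\IGnm\X$ is therefore immediate from the definition of $\IGnm$ as the ideal of $\Lambda(G)$ generated by $\{g-1 : g\in\Gnm\}$. For the reverse inclusion, one verifies that $[\X,\Ynm]$ is already stable under the full $\Lambda(G)$-action (not merely under the evident $\Lambda(\Gnm)$-action), by means of the identity $h\cdot\bigl((g-1)\cdot x\bigr) = (hgh^{-1}-1)\cdot(hx)$ for $h\in G$, which returns an element of the same prescribed form (here one uses that $[\X,\Ynm]$ is normal in $\Y$, so the action by $h$ permutes the chosen generating set).

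The key input is really the commutator formula $[x,y]=(g-1)\cdot x$, which is a direct and essentially formal calculation inside $\X\rtimes G$. The only mildly subtle step is the reverse inclusion, where one has to pass from the subgroup generated by the explicit commutators to the full $\Lambda(G)$-submodule $\IGnm\X$; once the rewriting $h\cdot((g-1)\cdot x) = (hgh^{-1}-1)\cdot(hx)$ is in hand this is routine, so I expect no serious obstacle.
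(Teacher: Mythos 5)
Your key computation $[x,y]=x^{-1}y^{-1}xy=x^{\bar y-1}$ is exactly the paper's proof, and up to that point the two arguments coincide: $[\X,\Ynm]$ is the closed subgroup of $\X$ generated by the elements $x^{g-1}$ with $g\in\Gnm$, $x\in\X$. The trouble is your reverse-inclusion step. You read $\IGnm\X$ as the image of an ideal of the full algebra $\Lambda(G)$ and therefore try to show that $[\X,\Ynm]$ is stable under the action of all of $G$, justifying this by the claim that $[\X,\Ynm]$ is normal in $\Y$ and that the rewriting $h\cdot\bigl((g-1)\cdot x\bigr)=(hgh^{-1}-1)\cdot(hx)$ returns a generator ``of the same prescribed form.'' Neither claim is available. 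Normality is only established inside $\Ynm$ (Lemma~\ref{lem:normalized}, Corollary~\ref{cor:normalaugment}), not inside $\Y$: indeed $\Ynm$ itself need not be normal in $\Y$, because $\Gnm=H_m\rtimes\Gamma_n$ is in general \emph{not} normal in $G$ --- the paper emphasizes that the fields $\Knm$ are typically not Galois over $K$ (for instance, in the Kummer example, conjugating $\Gamma_n$ by $h\in H$ lands in $\Gnm$ only when $m\le n+1$). For the same reason, $hgh^{-1}$ with $h\in G$ and $g\in\Gnm$ need not lie in $\Gnm$, so your rewriting does not stay inside the chosen generating set, and the asserted $\Lambda(G)$-stability of $[\X,\Ynm]$ is unjustified (and false in general).

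The resolution is that no such step is needed. The object $\IGnm\X$ appearing in the lemma is the closed subgroup of $\X$ generated by the elements $(g-1)\cdot x$ with $g\in\Gnm$, $x\in\X$ --- equivalently, the image of the augmentation ideal of $\Lambda(\Gnm)$ acting on $\X$. This is automatically a $\Lambda(\Gnm)$-submodule (your rewriting is valid for $h\in\Gnm$, since then $hgh^{-1}\in\Gnm$), but it need not be a $\Lambda(G)$-submodule, and the lemma does not claim that it is; this is consistent with the later use of $\X/\IGnm\X$ as the $\Gnm$-coinvariants of $\X$. With this reading, the single identity $[x,y]=x^{\bar y-1}$, with $\bar y$ running over all of $\Gnm$ as $y$ runs over $\Ynm$, gives both inclusions at once, which is why the paper's proof stops there. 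So keep your computation, but delete the claims of normality in $\Y$ and of stability under the full $\Lambda(G)$-action; only $\Gnm$-stability is true, and only that is needed.
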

\begin{proof}
Let $x\in\X$ and $y\in\Ynm$. We write $\bar y$ for the image of $y$  in $\Gnm$. Then,
\[
[x,y]=x^{-1}y^{-1}xy=x^{\bar y-1}. 
\]
Hence the result.
\end{proof}

\begin{corollary}\label{cor:normalaugment}
The augmentation ideal $\IGnm\X$ is a normal subgroup of $\Ynm$.
\end{corollary}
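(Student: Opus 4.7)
\medskip

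The plan is to reduce the corollary directly to Lemma~\ref{lem:augmentcommut} together with the already-noted fact that $\X$ is normal in $\Y$ (and hence in the intermediate subgroup $\Ynm$). By Lemma~\ref{lem:augmentcommut} we have the identification $\IGnm\X = [\X,\Ynm]$, so it is enough to show that the (closed) commutator subgroup $[\X,\Ynm]$ is stable under conjugation by elements of $\Ynm$.

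First I would check this on generators. A typical generator of $[\X,\Ynm]$ is a commutator $[x,y] = x^{-1}y^{-1}xy$ with $x \in \X$ and $y \in \Ynm$. For any $z \in \Ynm$, the standard commutator identity gives
\[
z^{-1}[x,y]z = [z^{-1}xz,\, z^{-1}yz].
\]
Since $\X \unlhd \Y$, the element $z^{-1}xz$ lies in $\X$; and since $\Ynm$ is a subgroup containing $y$ and $z$, we have $z^{-1}yz \in \Ynm$. Hence the conjugated commutator again belongs to $[\X,\Ynm]$.

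It remains to pass from generators to the whole (closed) subgroup. Conjugation by a fixed $z \in \Ynm$ is a continuous group automorphism of $\Ynm$, so it sends the closed subgroup generated by the commutators $[x,y]$ to itself. Rephrasing via Lemma~\ref{lem:augmentcommut} then yields $z^{-1}\IGnm\X\, z \subseteq \IGnm\X$ for every $z \in \Ynm$, which is the desired normality.

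There is essentially no obstacle: the only point that needs a small amount of care is the topological closure in the pro-$p$ setting, and this is handled by continuity of conjugation. One could equivalently invoke the general fact that if $N \unlhd H$ then $[N,H]$ is normal in $H$, applied with $N = \X$ and $H = \Ynm$.
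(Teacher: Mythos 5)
Your proposal is correct and follows essentially the same route as the paper: the paper likewise reduces the corollary to the identification $\IGnm\X=[\X,\Ynm]$ from Lemma~\ref{lem:augmentcommut} together with the normality of $[\X,\Ynm]$ in $\Ynm$ (which the paper records just before Lemma~\ref{lem:normalized} as a standard consequence of $\X\unlhd\Y$). You merely spell out the standard conjugation-of-commutators argument, including the topological closure, which the paper takes for granted.
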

\begin{proof}
As we have seen in Lemma~\ref{lem:normalized}, $[\X,\Ynm]$ is normal in $\Ynm$. Hence, the result follows from Lemma~\ref{lem:augmentcommut}.
\end{proof}

The augmentation ideal allows us to describe the commutator subgroup of $\Ynm$ as follows.

\begin{prop}\label{prop:commutator}
 We have the equality
\[
[\Ynm,\Ynm]={\left\langle\IGnm\X,[\Ipnm,\Ipnm]\right\rangle}.
\]
\end{prop}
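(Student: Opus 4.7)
The plan is to prove both inclusions of the asserted equality separately. The inclusion $\supseteq$ is immediate: Lemma~\ref{lem:augmentcommut} identifies $\IGnm\X$ with $[\X,\Ynm]$, which is contained in $[\Ynm,\Ynm]$, and $[\Ipnm,\Ipnm]$ is contained in $[\Ynm,\Ynm]$ since $\Ipnm \leq \Ynm$. Therefore the closed subgroup they generate sits inside $[\Ynm,\Ynm]$.

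For the reverse inclusion, the key structural observation is that $\Ynm = \X \cdot \Ipnm$. Indeed, under the projection $\Ynm \twoheadrightarrow \Gnm$, the kernel is $\X$, while the image of $\Ipnm$ is $\bInm = \Gnm$: the latter equality was already exploited in the proof of Lemma~\ref{lem:CFT} and follows from the total ramification of $\p$ in $\Kinf$ combined with the fact that $\M/\Kinf$ is unramified. Hence every element of $\Ynm$ may be written in the form $x\iota$ with $x \in \X$ and $\iota \in \Ipnm$.

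Given two elements $a_i = x_i \iota_i$ ($i = 1, 2$) of $\Ynm$, I would expand $[a_1, a_2]$ using the standard commutator identities $[uv, w] = [u,w]^v [v,w]$ and $[u, vw] = [u,w][u,v]^w$, together with the fact that $\X$ is abelian (so $[x_1, x_2] = 1$). The expansion collapses to
\[
[a_1, a_2] = [x_1, \iota_2]^{\iota_1} \cdot [\iota_1, \iota_2] \cdot [\iota_1, x_2]^{\iota_2}.
\]
The outer two factors lie in $[\X, \Ynm] = \IGnm\X$ by Lemma~\ref{lem:augmentcommut}, the middle factor lies in $[\Ipnm, \Ipnm]$, and the conjugations by $\iota_1, \iota_2 \in \Ynm$ preserve $\IGnm\X$ by Corollary~\ref{cor:normalaugment}. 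Consequently each commutator $[a_1, a_2]$ lies in $\langle \IGnm\X, [\Ipnm, \Ipnm]\rangle$, and since the latter is a closed subgroup, it contains the closure $[\Ynm, \Ynm]$ of the subgroup generated by all such commutators.

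I do not anticipate any serious obstacle: the argument is essentially a semidirect-product commutator expansion, and the only structural inputs beyond Lemma~\ref{lem:augmentcommut} and Corollary~\ref{cor:normalaugment} are the abelianness of $\X$ and the decomposition $\Ynm = \X \cdot \Ipnm$ coming from the total ramification of $\p$. The mildest point of care is the bookkeeping of conjugates to ensure each piece of the expanded commutator lands in the subgroup generated by $\IGnm\X$ and $[\Ipnm,\Ipnm]$, which is precisely where normality of $\IGnm\X$ in $\Ynm$ is needed.
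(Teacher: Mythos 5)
Your proposal is correct and follows essentially the same route as the paper: write each element of $\Ynm$ as $x\cdot b$ with $x\in\X$, $b\in\Ipnm$ (using $\Ipnm\X/\X\cong\Gnm$ from total ramification of $\p$), expand the commutator via the standard identities, and place the pieces in $[\X,\Ynm]=\IGnm\X$ and $[\Ipnm,\Ipnm]$, with normality of $\IGnm\X$ handling the conjugates. Your extra simplification $[x_1,x_2\iota_2]=[x_1,\iota_2]$ via the abelianness of $\X$ and the explicit treatment of the easy inclusion are harmless additions, not a different argument.
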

\begin{proof}
Recall that $\X$ is normal in $\Ynm$, $\Ynm/\X\cong\Gnm$ and $\Ipnm\X/\X\cong\Gnm$. Hence, every element of $\Ynm$ can be written as $x\cdot b$ for some $x\in \X$ and $b\in \Ipnm$. Let $x_1b_1,x_2b_2$ be any two elements of $\Ynm$ written in this way. We have the commutator identity
\[
[x_1b_1,x_2b_2]=[x_1,x_2b_2]^{b_1}[b_1,x_2b_2]=[x_1,x_2b_2]^{b_1}[b_1,b_2][b_1,x_2]^{b_2}.
\]
On the one hand, $[b_1,b_2]\in[\Ipnm,\Ipnm]$ by definition. On the other hand, both $[x_1,x_2b_2]$ and $[b_1,x_2]$ are inside $[\X,\Ynm]$, which is equal to $\IGnm\X$ by Lemma~\ref{lem:augmentcommut}. Hence the result.
\end{proof}

\begin{corollary}\label{cor:describeBnm}
We have
\begin{align*}
\Cnm&={\left\langle\IGnm\X,\Inm^\sigma:\nu\in\Sigma,\sigma\in \X\rtimes H\right\rangle};\\
\Bnm&=\IGnm\X+{\langle\Inm^\sigma:\nu\in\Sigma,\sigma\in\X\rtimes H\rangle}\cap \X.
\end{align*}
\end{corollary}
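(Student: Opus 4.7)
The plan is to read off the first identity by substituting the description of $[\Ynm,\Ynm]$ from Proposition~\ref{prop:commutator} into the definition of $\Cnm$, and then to deduce the formula for $\Bnm$ by exploiting the normality of $\IGnm\X$ in $\Ynm$ together with the fact that $\X$ itself is abelian.

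By definition $\Cnm=\langle [\Ynm,\Ynm],\Inm^\sigma:\nu\in\Sigma,\sigma\in\X\rtimes H\rangle$, and Proposition~\ref{prop:commutator} gives $[\Ynm,\Ynm]=\langle\IGnm\X,[\Ipnm,\Ipnm]\rangle$. Since $\p\in\Sigma$ by our standing hypothesis that $\p$ is totally ramified, taking $\nu=\p$ and $\sigma=1\in\X\rtimes H$ shows that $\Ipnm$ already appears among the generators $\Inm^\sigma$; in particular $[\Ipnm,\Ipnm]\subseteq \Ipnm$ is absorbed into the list and can be dropped. This yields the first identity.

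For the second identity, I set $N=\IGnm\X$ and $I=\langle\Inm^\sigma:\nu\in\Sigma,\sigma\in\X\rtimes H\rangle$. By Corollary~\ref{cor:normalaugment}, $N$ is a normal subgroup of $\Ynm$, so every element of $\Cnm=\langle N,I\rangle$ can be written as a product $n\cdot i$ with $n\in N$, $i\in I$. If such a product lies in $\X$, then $i=n^{-1}(n\cdot i)\in \X$ because $n\in N\subseteq\X$, so $i\in I\cap\X$; conversely, any element of the form $n\cdot i$ with $n\in N$ and $i\in I\cap\X$ already lies in $\X\cap\Cnm$. Since $\X$ is abelian, switching to additive notation turns the product $N\cdot (I\cap\X)$ into the sum $\IGnm\X + (I\cap\X)$, which is exactly the stated expression for $\Bnm$.

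There is no real obstacle here: the main points to be careful about are that $\Ipnm$ really is one of the $\Inm^\sigma$ (which uses $\p\in\Sigma$ and the totally ramified hypothesis), and that $N$ being normal lets us write $\Cnm$ as an honest product $NI$ rather than as an iterated set of words. Once these two observations are in place, the corollary is immediate.
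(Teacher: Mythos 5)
Your proof is correct and follows essentially the same route as the paper: substitute the description of $[\Ynm,\Ynm]$ from Proposition~\ref{prop:commutator} into the definition of $\Cnm$, absorb $[\Ipnm,\Ipnm]$ into the generator $\Ipnm$ (using $\p\in\Sigma$), and use the normality of $\IGnm\X$ from Corollary~\ref{cor:normalaugment} to intersect with $\X$ and obtain the formula for $\Bnm$. Your write-up merely makes explicit the paper's terse step that the second equality follows from the first via the decomposition $\Cnm=\IGnm\X\cdot\langle\Inm^\sigma\rangle$.
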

\begin{proof}By definition $[\Ynm,\Ynm]\subset\Cnm$ and $\IGnm\X\subset \X$, so we see from Proposition~\ref{prop:commutator} that
\[
\IGnm\X\subset \Bnm.
\]
Furthermore, Corollary~\ref{cor:normalaugment} says that $\IGnm\X$ is  normal in $\Ynm$. Therefore, the second equality follows from the first.

Recall that $\Cnm$ is defined to be
\[
{\left\langle[\Ynm,\Ynm],\Inm^\sigma:\nu\in\Sigma,\sigma\in\X\rtimes H\right\rangle}.
\]
Therefore, the first equality follows from the description of $[\Ynm,\Ynm]$ in Proposition~\ref{prop:commutator} and the fact that $[\Ipnm,\Ipnm]$ is contained in $\Ipnm$.
\end{proof}

\begin{prop}\label{prop:describeBnm}
The quotient $\Bnm/\IGnm\X$ is a $\Lambda(H)$-module  generated by the elements $x\in \X$ satisfying the property that $(x,h)\in \Inm$ for some $\nu\in\Sigma\setminus\{\p\}$ and $h\in H_m$.
\end{prop}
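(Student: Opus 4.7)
The plan is to start from the description in Corollary~\ref{cor:describeBnm},
\[
\Bnm = \IGnm\X + \Bigl(\bigl\langle \Inm^\sigma : \nu \in \Sigma,\ \sigma \in \X \rtimes H\bigr\rangle \cap \X\Bigr),
\]
and to identify which conjugate inertia elements actually contribute to $\Bnm/\IGnm\X$.

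First I would eliminate $\nu = \p$. By \eqref{eq:Ip}, every element of $\Ipnm$ has the form $(1, g)$ with $g \in \Gnm$; the computation of Lemma~\ref{lem:normalized}, applied with an arbitrary $\sigma = (y, k) \in \X \rtimes H$ in place of $(x, h) \in \Ynm$, shows that $\sigma^{-1}(1, g)\sigma$ has $\X$-component in $\IGnm\X$ since the factor $(g-1)$ belongs to $\IGnm$. Hence conjugates of $\Ipnm$ contribute nothing to $\Bnm/\IGnm\X$ beyond their $\Gnm$-components, which I will exploit below. The analogous calculation for $(a, g) \in \Inm$ with $\nu \neq \p$ gives
\[
\sigma^{-1}(a, g)\sigma \equiv (a^k,\ k^{-1}gk) \pmod{\IGnm\X},
\]
so conjugation modulo $\IGnm\X$ depends only on the $H$-component $k$ of $\sigma$.

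The key structural input is that $\bInm \subset H_m$ for $\nu \in \Sigma \setminus \{\p\}$. This follows because $\p$ is the unique prime of $K$ above $p$, so any $\nu \neq \p$ lies over a rational prime $\neq p$; since $K^\c/K$ is a $\Zp$-extension (hence unramified outside $p$), $\nu$ is unramified in $K^\c$. A Galois-theoretic argument applied to the decomposition $\Gnm = H_m \rtimes \Gamma_n$ and the intermediate field $\Knm \cdot K^\c$ then shows that $\bInm$ projects trivially to $\Gamma_n$, i.e., $\bInm \subset H_m$.

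Combining these ingredients: for any $(a, h) \in \Inm$ with $\nu \ne \p$, $h \in \bInm \subset H_m$, and $k \in H$, the conjugate $(a^k, k^{-1}hk)$ has second component in $H_m$ (by normality of $H_m$ in $G$); multiplying on the right by $(1, (k^{-1}hk)^{-1}) \in \Ipnm \subset \Cnm$ produces $(a^k, 1) \in \Cnm$, hence $a^k \in \Bnm$. Since $k$ ranges over $H$ and the action corresponds to the $\Lambda(H)$-module structure on $\X$, the elements $a^k$ span the $\Lambda(H)$-submodule generated by $a$, so $\Bnm/\IGnm\X$ is exactly the $\Lambda(H)$-module generated by all such $a$'s---precisely the description in the proposition. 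The main obstacle is the first step: extending Lemma~\ref{lem:normalized}'s formula to arbitrary $(a, g) \in \Inm$ and $\sigma \in \X \rtimes H$, together with verifying the structural claim $\bInm \subset H_m$, which is what pins down the condition ``$h \in H_m$'' in the statement.
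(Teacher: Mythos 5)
Your argument is correct and follows essentially the same route as the paper: starting from Corollary~\ref{cor:describeBnm}, you use the semidirect-product conjugation formula to show that conjugates of $\Ipnm$ contribute only elements of $\IGnm\X$ in the $\X$-direction, that conjugation of $\Inm$ (for $\nu\neq\p$) acts through the $H$-component of $\sigma$ modulo $\IGnm\X$, and the multiplication-by-$(1,h^{-1})\in\Ipnm$ trick to place the generators inside $\Bnm$ --- exactly the paper's steps, with the fact $\bInm\subset H_m$ (used implicitly in the paper's proof and stated later in the proof of Lemma~\ref{lem:structure}) made explicit, which is a welcome addition. The only step you leave compressed is the reverse inclusion for an arbitrary element of $\Bnm$: the paper writes a general element of $\Cnm/\IGnm\X$ as a product $\alpha\beta$ with $\beta\in\Ipnm$ via the normality statement of Lemma~\ref{lem:normalized}, and then uses the identity $(x_\nu,h_\nu)(x_{\nu'},h_{\nu'})=(x_\nu x_{\nu'}^{h_\nu},h_\nu h_{\nu'})$ to see that the $\X$-part of $\alpha$ lies in the $\Lambda(H)$-span of the stated generators; your per-generator congruences make this bookkeeping routine, but it is what actually justifies the word ``exactly'' in your conclusion.
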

\begin{proof}
Suppose that $(x,h)\in\Inm$ for some $h\in H_m$ and $\nu\in\Sigma\setminus\{\p\}$, then $(1,h)\in \Ipnm$ thanks to \eqref{eq:Ip}.  Consequently, $(x,1)=(x,h)(1,h)^{-1}\in \Cnm\cap \X=\Bnm$.

Recall from Lemma~\ref{lem:normalized} that the image of $\Ipnm$ in $\Ynm/[\X,\Ynm]$ is a normal subgroup. By Lemma~\ref{lem:augmentcommut}, we have $[\Xnm,\Ynm]=\IGnm\X$. Hence,
\[{\langle\IGnm\X,\Ipnm\rangle}/\IGnm\X\unlhd\Cnm/\IGnm\X.\]

On applying Corollary~\ref{cor:describeBnm}, we deduce that every element in $\Cnm/\IGnm\X$ may be written as a product $\alpha\beta$ for some  $\alpha\in{\langle\Inm^\sigma:\nu\in\Sigma\setminus\{\p\},\sigma\in\X\rtimes H\rangle}$ and $\beta\in\Ipnm$.

Suppose that an element $\alpha\beta$ as above is contained in $\Bnm/\IGnm\X$. Then, $\alpha$ is a product of elements of the form $(x_\nu,h_\nu)^\sigma\in \Inm$, where $\sigma=(x_\sigma,h_\sigma)\in\X\rtimes H$. We have in fact
\[
(x_\nu,h_\nu)^\sigma=(x_\sigma^{(h_\nu-1)h_\sigma^{-1}}x_\nu^{h_\sigma^{-1}},h_\nu)
\]
given that $H$ is abelian. But $x_\sigma^{(h_\nu-1)h_\sigma^{-1}}\in [\X,\X\rtimes H_m]\subset \IGnm\X$ by  Lemma~\ref{lem:augmentcommut} and the fact that $h_\nu\in H_m$.
Furthermore, we have the identity $(x_\nu,h_\nu)(x_{\nu'},h_{\nu'})=(x_\nu x_{\nu'}^{h_\nu},h_\nu h_{\nu'})$, which implies that $\alpha=(x,h)$ for some $x\in\X$ is inside the $\Lambda(H)$-module generated by the elements $x_\nu$ as described in the statement of the proposition and $h\in H_m$ with $\beta=(1,h^{-1})$. Hence the result.
\end{proof}

\subsection{The $\Zp$-rank of $\Bnm/\IGnm\X$}

In the previous section, we showed in Proposition~\ref{prop:describeBnm} that we may find explicit generators for the quotient
$\Bnm/\IGnm\X$. We shall now bound its $\Zp$-rank.

The aforementioned quotient is generated by the ``projection" of $\Inm^h$ in $\X$, where $\nu\in\Sigma\setminus\{\p\}$ and $h\in H$. But the map $\Y\rightarrow \X$, $(x,g)\mapsto x$ is not a group homomorphism a priori. However, if $(x,g),(y,h)\in \Ynm$, we have
\[
(x,g)\cdot (y,h)=(xy^g,gh),
\]
and
\[
xy^g\equiv xy\mod [\X,\Ynm]=\IGnm\X
\]
by Lemma~\ref{lem:augmentcommut}. Therefore, 
the map 
\begin{align*}
\Ynm/\IGnm\X&\rightarrow \X/\IGnm\X\\
(x,g)&\mapsto x
\end{align*}
is a well-defined group homomorphism.

\begin{lemma}\label{lem:Zprank}
The quotient $\Bnm/\IGnm\X$ is a finitely generated $\Zp$-module. Furthermore, its rank is bounded by $r_{n,m}$, where $r_{n,m}$ is  the number of places of $\Knm$ above $\Sigma\setminus\{\p\}$.
\end{lemma}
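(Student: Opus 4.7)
The plan is to exhibit $\Bnm/\IGnm\X$ as a subgroup of a sum of $r_{n,m}$ procyclic subgroups of $\X/\IGnm\X$, one for each prime of $\Knm$ above $\Sigma\setminus\{\p\}$. The main tool is the homomorphism $\phi\colon \Ynm/\IGnm\X\to\X/\IGnm\X$, $(x,g)\mapsto x+\IGnm\X$, shown to be well-defined in the discussion preceding the lemma. For each prime $w$ of $\Knm$ above some $\nu\in\Sigma\setminus\{\p\}$, choose a prime $\bar w$ of $\M$ above $w$ and let $I_w\subseteq\Ynm$ be its inertia subgroup. Since $\M/\Kinf$ is unramified, $I_w$ maps isomorphically onto its image $\bar I_w\subseteq\Gnm$. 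Because $\nu$ does not lie above $p$ and $\Gnm$ is pro-$p$, the pro-$p$ part of the inertia at $\nu$ is tame and hence procyclic; therefore $I_w$, and thus its image $\phi(I_w)\subseteq\X/\IGnm\X$, is procyclic of $\Zp$-rank at most one.

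The key technical step is to verify that $\phi(I_w)$ does not depend on the choice of $\bar w$. Any other prime of $\M$ above $w$ has inertia $\tau I_w\tau^{-1}$ for some $\tau=(y,k)\in\Ynm=\X\rtimes\Gnm$, and a direct computation in the semidirect product gives, for $(x,h)\in I_w$,
\[
\tau(x,h)\tau^{-1} = \bigl(k\cdot x + (1-khk^{-1})\,y,\; khk^{-1}\bigr).
\]
Since $k,h\in\Gnm$, both $(k-1)\cdot x$ and $(1-khk^{-1})\,y$ lie in $\IGnm\X$, so the first coordinate reduces to $x$ modulo $\IGnm\X$. Hence $\phi(\tau I_w\tau^{-1})=\phi(I_w)$ as subgroups of $\X/\IGnm\X$.

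Finally, I claim that $\Bnm/\IGnm\X\subseteq\sum_w\phi(I_w)$, where $w$ runs over the $r_{n,m}$ primes of $\Knm$ above $\Sigma\setminus\{\p\}$. By Corollary~\ref{cor:describeBnm}, $\Bnm/\IGnm\X$ is generated as an abelian group by the images of $\Inm^\sigma$ for $\nu\in\Sigma\setminus\{\p\}$ and $\sigma\in\X\rtimes H$ (the $\Ipnm$-part is killed by $\phi$ thanks to \eqref{eq:Ip}). Each $\Inm^\sigma=\sigma^{-1}\Inm\sigma$ is contained in the inertia in $\Ynm$ of $\sigma^{-1}\bar\nu$, a prime of $\M$ lying above some $w$ among our $r_{n,m}$ primes; since $\sigma^{-1}\bar\nu$ and the chosen $\bar w$ differ by an element of $\Ynm$, the preceding paragraph gives $\phi(\Inm^\sigma)\subseteq\phi(I_w)$. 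Therefore $\Bnm/\IGnm\X$ is contained in the $\Zp$-module generated by $r_{n,m}$ procyclic subgroups, which is finitely generated of $\Zp$-rank at most $r_{n,m}$. The main technical obstacle is the semidirect-product conjugation identity above; the remaining ingredients are the procyclicity of tame inertia and direct bookkeeping with Corollary~\ref{cor:describeBnm}.
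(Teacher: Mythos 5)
Your argument is correct and essentially the same as the paper's: the generators of $\Bnm/\IGnm\X$ are the images under the projection map of the inertia groups at primes of $\M$ above $\Sigma\setminus\{\p\}$, primes of $\M$ differing by an element of $\Ynm$ give the same image modulo $\IGnm\X$ (your semidirect-product conjugation identity is the computation already carried out in the proof of Proposition~\ref{prop:describeBnm}), and each such image has $\Zp$-rank at most one because the primes in question are prime to $p$ --- your tame-inertia/procyclicity argument is the same local fact the paper invokes via the structure $\Zp\rtimes\Zp$ of the local pro-$p$ Galois group with its one-dimensional unramified quotient. One small repair: the parenthetical claim that the $\Ipnm$-part is killed by $\phi$ ``thanks to \eqref{eq:Ip}'' does not cover the conjugates $\Ipnm^\sigma$ for $\sigma$ with nontrivial $\X$-component, whose images under $\phi$ are not visibly trivial; the reduction to generators coming only from $\nu\in\Sigma\setminus\{\p\}$ is exactly the content of Proposition~\ref{prop:describeBnm}, which is what should be cited at that point (and is what the paper's own proof does), after which your argument goes through unchanged.
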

\begin{proof}
As discussed above, the quotient is generated by the projections of $\Inm^h$ in $\X$, where $\nu\in\Sigma\setminus\{\p\}$ and $h\in H$, which corresponds to all the inertia groups of the places of $\M$ lying above $\Sigma\setminus\{\p\}$. 

If two primes of $\M$ differ by an element in $\Ynm$, then their inertia groups coincide modulo $\IGnm\X$ as we have seen in the proof of Proposition~\ref{prop:describeBnm}. Therefore, if for each prime of $\Knm$ lying above $\Sigma\setminus\{\p\}$, we pick one prime in $\M$ lying above this prime, the resulting inertia groups generate the quotient $\Bnm/\IGnm\X$.

Our result then follows from the fact that each of these inertia groups has $\Zp$-rank at most $1$. Indeed, all primes in $\Sigma\setminus\{\p\}$ are coprime to $p$ by assumption, so the maximal pro-$p$ extension of $K_\nu$ is isomorphic to $\Zp\rtimes\Zp$, which is of dimension $2$, as given by \cite[II.\S5.6 Exercices]{serre62}. Since $K_\nu$ admits  a one-dimensional unramified $\Zp$-extension, the inertia group  has dimension at most 1.
\end{proof}

\begin{lemma}\label{lem:numberofprimes}
Let $r_{n,m}$ be as defined in Lemma~\ref{lem:Zprank}, then 
\begin{enumerate}[(i)]
\item For $n$ sufficiently large, $r_{n,m}$ depends only on $m$;
\item $r_{n,m}=O(p^{(d-2)m})$ for $m\gg0$.
\end{enumerate}
\end{lemma}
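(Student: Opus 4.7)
For each $\nu \in \Sigma \setminus \{\p\}$, let $r_{n,m}^\nu$ denote the number of primes of $K_{n,m}$ above $\nu$, so that $r_{n,m}=\sum_\nu r_{n,m}^\nu$ is a finite sum. Since $\nu$ is inert in $K^\c$ (as arranged in Section~\ref{S:ramification}), the decomposition group $D_{\bar\nu}\subseteq G$ of our chosen prime of $\M$ above $\nu$ surjects onto $\Gamma$; write $D_\nu^H := D_{\bar\nu}\cap H$. Standard Galois-theoretic orbit counting then identifies $r_{n,m}^\nu=|G_{n,m}\backslash G/D_{\bar\nu}|$.

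For part~(i), I would use monotonicity: $r_{n,m}^\nu$ is non-decreasing in $n$ for fixed $m$ (as $K_{n,m}\subseteq K_{n+1,m}$, so primes can only split further), and bounded above by $r_{\infty,m}^\nu$, the number of primes of $K_{\infty,m}$ above $\nu$. The extension $K_{\infty,m}/K$ is Galois with group $G/H_m$, so $r_{\infty,m}^\nu=[G:H_m D_{\bar\nu}]$, and a direct calculation using $D_{\bar\nu}\twoheadrightarrow \Gamma$ reduces this to $[H:H_m+D_\nu^H]$, a finite integer depending only on $m$. Hence $r_{n,m}^\nu$ stabilizes for $n$ large (depending on $m$), proving~(i).

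For part~(ii), the same bound yields $r_{n,m}^\nu\le [H:H_m+D_\nu^H]$. Writing $H\cong\Zp^{d-1}$ and letting $k=\rank_{\Zp}(D_\nu^H)$, this index is of order $p^{(d-1-k)m}$ as $m\to\infty$. The desired estimate $O(p^{(d-2)m})$ thus reduces to the claim that $k\ge 1$. I would establish this using the inertia subgroup $I_\nu^G\subseteq D_{\bar\nu}$: since $\nu\in\Sigma$ ramifies in $\Kinf$, $I_\nu^G$ is non-trivial, and since $\nu\nmid p$ (as $\p$ is the unique prime of $K$ above $p$), $I_\nu^G$ has $\Zp$-rank at most $1$ by the description of the maximal pro-$p$ Galois group of a local field with residue characteristic different from $p$ (cf.\ the argument in Lemma~\ref{lem:Zprank}). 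In the standard setting where $\nu$ is unramified in $K^\c/K$, $I_\nu^G$ lies entirely in $H$, so $D_\nu^H\supseteq I_\nu^G$ has rank $\ge 1$, yielding the bound.

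The main obstacle is verifying $k\ge 1$ in the edge case where $\nu$ itself ramifies in $K^\c/K$, so that $I_\nu^G$ may project non-trivially onto $\Gamma$ and need not lie in $H$. In that scenario I would instead rewrite $r_{n,m}^\nu$ as the number of orbits of an \emph{affine} $\Gamma_n$-action on $H/(H_m+D_\nu^H)$, where the affine twist arises from a set-theoretic section $\Gamma\to D_{\bar\nu}$, and exploit this twist to absorb the missing factor of $p^m$ via the resulting orbit sizes.
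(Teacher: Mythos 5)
Your argument is correct and is, at its core, the same as the paper's: the paper shows that the decomposition group in $G$ of a prime of $\Kinf$ above $\nu\in\Sigma\setminus\{\p\}$ has dimension at least two (an open image in $\Gamma$ because $\nu$ is inert in $K^\c$ after the normalization of \S\ref{S:ramification}, plus a one-dimensional inertia group), and then bounds $r_{n,m}$ by comparing $|G:\Gnm|=p^{n+(d-1)m}$ with the sum of the local degrees, each at least a constant times $p^{n+m}$; your double-coset count $r_{n,m}^\nu\le[H:H_m+D_\nu^H]=O(p^{(d-2)m})$ is an equivalent (if anything slightly cleaner, since it is visibly uniform in $n$) packaging of the same two inputs, and your part (i) is exactly the paper's one-line argument via finiteness of the set of primes of $K_{\infty,m}$ above $\nu$. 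The only point to correct is your final paragraph: the ``edge case'' you flag never occurs. Since $\p$ is the unique prime of $K$ above $p$, every $\nu\in\Sigma\setminus\{\p\}$ is prime to $p$, and a $\Zp$-extension of a number field is unramified outside $p$; hence $\nu$ is automatically unramified in $K^\c/K$, the inertia subgroup of $\bar\nu$ in $G$ lies in $H$, and, being a nontrivial closed subgroup of $H\cong\Zp^{d-1}$, it has $\Zp$-rank at least one. So $k\ge 1$ holds unconditionally, your ``standard setting'' is the only setting, and the proposed affine-orbit workaround is unnecessary.
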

\begin{proof}
Since  there is a finite number of primes in $K_{\infty,m}$ lying above each prime of $\Sigma\setminus\{\p\}$,  part (i) follows.

We now prove part (ii). Fix a prime $\bar\nu$ of $\Kinf$ above $\nu\in\Sigma\setminus\{\p\}$. As we have seen in the proof of Lemma~\ref{lem:Zprank}, the inertia group of $\bar\nu$ is a $p$-adic Lie group of dimension one. Furthermore, $\nu$ is inert over $K^\c/K_{n,0}$ for $n$ sufficiently large. Therefore, the decomposition group of $\bar\nu$ is of dimension two.

Let $\vpi$ be a prime of $\Knm$ above $\nu$. Let $G_{\vpi}$ be the decomposition group of  $\vpi$ in the extension $\Knm/K$. Our observation on the dimension of the decomposition group of $\bar\nu$ tells us that there exists a constant $C>0$ such that $|G_{\vpi}|\ge C\times p^{n+m}$ for all $\vpi$. But
\[
p^{n+(d-1)m}=|G:\Gnm|=\sum_{\vpi|\nu}|G_{\vpi}|.
\]
 If $r_{n,m,\nu}$ denotes the number of places of $\Knm$ above $\nu$. Then,
 $$r_{n,m,\nu}\le p^{n+(d-1)m}/Cp^{n+m}=p^{(d-2)m}/C,$$ which gives (ii).
\end{proof}

On combining these two lemmas, we deduce:

\begin{corollary}\label{cor:rank}
The quotient $\Bnm/\IGnm\X$ is a finitely generated $\Zp$-module with
\[
\rank_{\Zp}\Bnm/\IGnm\X=O(p^{(d-2)m})
\]
for $m\gg0$ (and independent of $n$).
\end{corollary}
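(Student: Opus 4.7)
The plan is simply to chain the two preceding lemmas. Lemma~\ref{lem:Zprank} already produces the upper bound
\[
\rank_{\Zp}\Bnm/\IGnm\X \;\le\; r_{n,m},
\]
where $r_{n,m}$ is the number of places of $\Knm$ above $\Sigma\setminus\{\p\}$, and simultaneously confirms that the quotient is a finitely generated $\Zp$-module. Substituting this bound into Lemma~\ref{lem:numberofprimes}(ii) immediately yields $\rank_{\Zp}\Bnm/\IGnm\X = O(p^{(d-2)m})$ as $m\to\infty$.

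The only subtlety is the claim that the bound is independent of $n$. This is taken care of by Lemma~\ref{lem:numberofprimes}(i): for $n$ sufficiently large, $r_{n,m}$ stops depending on $n$, because each prime of $\Sigma\setminus\{\p\}$ has only finitely many places above it in $K_{\infty,m}$ and hence stabilizes under the tower $K_{n,m}/K_{0,m}$ for large $n$. Combined with the asymptotic from part (ii), we get a bound in $m$ alone, as required. There is no genuine obstacle — the proof amounts to writing ``combine Lemma~\ref{lem:Zprank} and Lemma~\ref{lem:numberofprimes}'' with a line of justification.
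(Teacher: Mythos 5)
Your proposal is correct and is exactly the paper's argument: the corollary is stated immediately after the two lemmas with the remark ``on combining these two lemmas,'' i.e.\ the bound $\rank_{\Zp}\Bnm/\IGnm\X\le r_{n,m}$ from Lemma~\ref{lem:Zprank} is fed into the estimate and $n$-independence of $r_{n,m}$ from Lemma~\ref{lem:numberofprimes}. Nothing further is needed.
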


\subsection{Algebraic structure of $\X$}
Our analysis on $\Bnm$ allows us to study the structure of $\X$ as a $\Lambda(G)$-module. In particular, we prove the following.

\begin{theorem}\label{thm:torsion}
The $\Lambda(G)$-module $\X$  is torsion.
\end{theorem}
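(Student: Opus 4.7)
My plan is to deduce the $\Lambda(G)$-torsion of $\X$ from the estimates of Subsection~\ref{S:estimateBnm}, by comparing the $\Zp$-rank of $\X/\IGnm\X$ against the index $[G:\Gnm]=p^{n+(d-1)m}$ as $m\to\infty$.

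First I would fix an integer $n$ (e.g.\ $n=0$). Since $\Xnm$ is finite---it is the Hilbert $p$-class group of the number field $\Knm$---the short exact sequence \eqref{eq:SES} forces $\X/\IGnm\X$ and $\Bnm/\IGnm\X$ to share the same $\Zp$-rank. Corollary~\ref{cor:rank} then bounds this common rank by $O(p^{(d-2)m})$ for $m\gg 0$; in particular it is $o(p^{(d-1)m})$.

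Next I would compare this with $\Lambda(G)/\IGnm\Lambda(G)\cong\Zp[G/\Gnm]$, which is $\Zp$-free of rank $p^{n+(d-1)m}$. For any finitely generated $\Lambda(G)$-module $M$ of rank $\rho\ge1$, a standard asymptotic comparison (essentially the leading-term statement in Perbet's formula \eqref{eq:perbet} applied in the $m$-direction) yields
\[
\rank_\Zp M/\IGnm M\ \ge\ \rho\,p^{n+(d-1)m}-o\bigl(p^{n+(d-1)m}\bigr)
\]
as $m\to\infty$. Specializing to $M=\X$ and confronting this lower bound with the $O(p^{(d-2)m})$ upper bound from the first step forces $\rho=0$, which is the theorem.

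The main obstacle is making precise the lower bound for $\rank_\Zp M/\IGnm M$ in the non-commutative setting. A concrete strategy is to use that $H_m$ is normal in $G$ (being characteristic in $H$) and to pass to the one-dimensional $p$-adic Lie quotient $\bar G_m:=G/H_m$. Then $M/I_{H_m}M$ is finitely generated over $\Lambda(\bar G_m)$, and $M/\IGnm M$ is recovered from it by taking coinvariants under the image of $\Gamma_n$ in $\bar G_m$; this reduces the required lower bound to the classical Iwasawa theory for (generalized) $\Zp$-extensions, thereby yielding the desired contradiction.
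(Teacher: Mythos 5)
Your first step is exactly the paper's: the finiteness of $\Xnm$ together with \eqref{eq:SES} and Corollary~\ref{cor:rank} shows that the coinvariants $\X/\IGnm\X$ have small $\Zp$-rank, and one wants to play this off against the rank growth forced by $\rho=\rank_{\Lambda(G)}\X$. However, there is a genuine gap in the second half: you fix $n$ and let $m\to\infty$, and the lower bound you assert,
\[
\rank_\Zp M/\IGnm M\ \ge\ \rho\,p^{n+(d-1)m}-o\bigl(p^{n+(d-1)m}\bigr),
\]
is not ``essentially the leading-term statement in Perbet's formula'': both Perbet's \eqref{eq:perbet} and the rank asymptotic of Harris that the paper invokes are proved for a filtration of open subgroups shrinking to the identity (in practice the diagonal $G_{n,n}$), whereas your subgroups $\Gnm$ with $n$ fixed all contain $\Gamma_n$, so $\bigcap_m\Gnm\neq\{1\}$ and those results simply do not apply. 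Concretely, if you compare $M$ with a free submodule $F\cong\Lambda(G)^{\rho}$ with torsion quotient $T$, the possible loss in the coinvariant rank is $\rank_\Zp H_1(\Gnm,T)$, and for fixed $n$ this can a priori be of the \emph{same order as the main term}: already in the abelian case $d=2$, the torsion module $T=\Lambda(G)/\omega_n(\gamma-1)$ has $H_1(\Gnm,T)$ of $\Zp$-rank $p^{n+m}$. So the reduction you sketch in your last paragraph (pass to $G/H_m$ and ``reduce to classical Iwasawa theory'') is where all the actual work lies --- one must prove a control statement such as $\rank_{\Lambda(\Gamma)}H_1(H_m,T)=o(p^{(d-1)m})$ for torsion $\Lambda(G)$-modules $T$ (e.g.\ via stabilization of $T^{H_m}$ by Noetherianity) and then descend along $\Gamma_n$; none of this is supplied, and it is not in the references you lean on.

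The gap is easily repaired by not fixing $n$ at all: take $m=n$ and run the same comparison along the diagonal, exactly as the paper does. Corollary~\ref{cor:rank} (whose bound is independent of $n$) together with \eqref{eq:SES} and the finiteness of $\X_{n,n}$ gives $\rank_\Zp\X/I_{G_{n,n}}\X=O(p^{(d-2)n})$, while Harris's theorem, which is valid for the shrinking family $G_{n,n}$, gives $\rank_\Zp\X/I_{G_{n,n}}\X=\rho\times p^{dn}+O(p^{(d-1)n})$; comparing forces $\rho=0$ with no auxiliary control theorem needed.
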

\begin{proof}
As we have recalled above,  $\X$ is finitely generated over $\Lambda(G)$ by \cite[Proposition~3.1]{perbet}. In particular, if $\rho$ denotes its rank, \cite[Theorem~1.10]{harris00} tells us that
\[
\rank_{\Zp}\X/I_{G_{n,n} }\X=\rho \times p^{dn}+O(p^{(d-1)n}).
\]
By \eqref{eq:SES}, together with Corollary~\ref{cor:rank} and the finiteness of $\X_{n,n}$, we have in fact $$\rank_{\Zp}\X/I_{G_{n,n} }\X=O(p^{(d-2)n}).$$ This implies that $\rho=0$ and hence the result.
\end{proof}
This allows us to eliminate the most dominant term of  Perbet's formula \eqref{eq:perbet}.

\begin{corollary}\label{cor:perbet}
Let $\tenn$ denote the $p$-exponent of $\#\X_{n,n}/p^n$. Then,
\[
\tenn= \mu\times p^{dn}+O(np^{(d-1)n}),
\]
for some integer $\mu$.
\end{corollary}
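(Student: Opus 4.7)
The plan is essentially to substitute the conclusion of Theorem~\ref{thm:torsion} into Perbet's formula \eqref{eq:perbet}. Recall that the leading coefficient in that formula is $\rho = \rank_{\Lambda(G)}\X$, and Perbet's formula reads
\[
\tenn = \rho \times n p^{dn} + \mu \times p^{dn} + O(np^{(d-1)n}),
\]
where $\mu$ is the $\Lambda(G)$-$\mu$-invariant of $\X$ in the sense of Venjakob.

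First I would invoke Theorem~\ref{thm:torsion}, which asserts that $\X$ is torsion over $\Lambda(G)$. By definition this forces $\rho=0$, so the leading term $\rho\times np^{dn}$ vanishes. What remains is
\[
\tenn = \mu \times p^{dn} + O(np^{(d-1)n}),
\]
which is exactly the claimed formula. The integer $\mu$ is simply the $\mu$-invariant of $\X$ viewed as a $\Lambda(G)$-module.

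There is no real obstacle here; this is a direct corollary of Theorem~\ref{thm:torsion} combined with the already-cited result of Perbet. The only point worth noting is that one must be applying Perbet's formula with $\X$ (rather than $\X'$ or some other module), so that the $\rho$ and $\mu$ appearing in \eqref{eq:perbet} are indeed the invariants of the module whose $\Lambda(G)$-rank we just proved to be zero.
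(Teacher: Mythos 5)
Your proposal is correct and matches the paper's argument exactly: the corollary is obtained by substituting $\rho=\rank_{\Lambda(G)}\X=0$, which follows from Theorem~\ref{thm:torsion}, into Perbet's formula \eqref{eq:perbet}, leaving only the $\mu\times p^{dn}$ term and the stated error. Your remark that the invariants in \eqref{eq:perbet} must be those of $\X$ itself is a sensible sanity check but requires no further justification.
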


Under an additional  hypothesis on $\X_{\infty,0}$, we can in fact show more:
\begin{theorem}\label{thm:MHG}
The $\Lambda(\Gamma)$-module $\X_{\infty,0}$ is finite if and only if $\X$ is finitely generated over $\Lambda(H)$. In particular, when this holds, $\X$ belongs to the $\MHG$-category.
\end{theorem}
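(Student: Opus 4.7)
Plan. The strategy is to translate the condition ``$\X$ is finitely generated over $\Lambda(H)$'' into a condition on the $H$-coinvariants $\X_H := \X/I_H\X$ via a topological Nakayama lemma, and then to identify $\X_H$ with $\X_{\infty,0}$ up to a finite discrepancy using class field theory over the tower $K^c \subset \Kinf \subset \M$. The key input is Lemma~\ref{lem:augmentcommut}, which provides the bridge between the group theoretic quotient $\X_H$ and abelian Galois groups of number field extensions.

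Step 1 (Nakayama). Forgetting the $\Gamma$-structure, $\X$ is a compact module over the pro-$p$ local ring $\Lambda(H) = \Zp[[H]]$ with maximal ideal $(p, I_H)$. Topological Nakayama says $\X$ is finitely generated over $\Lambda(H)$ if and only if $\X_H/p\X_H$ is finite, which will be used to transfer finiteness between $\X$, $\X_H$, and $\X_{\infty,0}$.

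Step 2 (Identifying $\X_H$ as an abelian Galois group and comparing with $\X_{\infty,0}$). Let $\M^{ab}$ denote the maximal subextension of $\M/K^c$ abelian over $K^c$. The exact sequence $1 \to \X \to \Gal(\M/K^c) \to H \to 1$ combined with Lemma~\ref{lem:augmentcommut} (which identifies $[\X,\Gal(\M/K^c)]=I_H\X$) yields $\Gal(\M^{ab}/\Kinf) \cong \X_H$, and the resulting sequence $1 \to \X_H \to \Gal(\M^{ab}/K^c) \to H \to 1$ splits via $\Kinf \subset \M^{ab}$. Let $L_\infty$ be the maximal unramified abelian pro-$p$ extension of $K^c$, so that $\X_{\infty,0} = \Gal(L_\infty/K^c)$. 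Since $\p$ is the unique prime of $K^c$ above $p$ and is totally ramified in $\Kinf/K^c$, the only unramified subextension of $\Kinf/K^c$ is $K^c$; hence $L_\infty \cap \Kinf = K^c$, and $L_\infty\Kinf \subset \M^{ab}$. Restriction to $L_\infty$ produces a surjection $\X_H \twoheadrightarrow \X_{\infty,0}$ whose kernel is $N := \Gal(\M^{ab}/L_\infty\Kinf)$.

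Step 3 (Finiteness of $N$ and conclusion). The extension $\M^{ab}/K^c$ is abelian pro-$p$ and unramified outside the finite set $S$ of primes of $K^c$ above $\Sigma$. At the unique prime $\p_c \in S$ above $p$, the inertia subgroup in $\Gal(\M^{ab}/K^c)$ surjects onto $\Gal(\Kinf/K^c)=H$ (by total ramification) with trivial kernel (since $\M^{ab}/\Kinf$ is unramified), hence coincides with the split copy of $H$ and is absorbed into $\Kinf$. At each of the finitely many remaining primes $v \in S \setminus \{\p_c\}$, the local inertia in an abelian pro-$p$ extension of the non-$p$-adic local field $K^c_v$ is tame, given by the pro-$p$ part of $\mu(K^c_v)$, and thus finite. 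Together these show $N$ is finite, giving a four-term comparison $0 \to N \to \X_H \to \X_{\infty,0} \to 0$ with $N$ finite. Combined with Step 1, this yields the desired equivalence between $\X$ being finitely generated over $\Lambda(H)$ and $\X_{\infty,0}$ being finite, and the concluding membership $\X \in \MHG$ follows directly from the definition of the category (since $\X$ is automatically finitely generated over $\Lambda(G)$ by Perbet's result recalled in \S\ref{S:ramification}).

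The main obstacle is Step 3, and in particular the claim that the inertia at $\p_c$ in $\Gal(\M^{ab}/K^c)$ coincides with the canonical image of $H$ rather than being strictly larger. This is where the hypothesis that $\p$ has a unique prime above it and is totally ramified in $\Kinf$ is used essentially; without this, the local inertia at $\p_c$ could contribute a positive-rank $\Zp$-piece to $N$ and the direct comparison between $\X_H$ and $\X_{\infty,0}$ would break down.
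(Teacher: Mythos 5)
Your overall route is the same as the paper's: the exact sequence $0\to N\to \X_H\to \X_{\infty,0}\to 0$ you construct in Step 2 is exactly the paper's sequence \eqref{eq:SES} specialised to $(n,m)=(\infty,0)$ (your $N=\Gal(\M^{\mathrm{ab}}/L_\infty\Kinf)$ is the paper's $B_{\infty,0}/I_H\X$), and both arguments close with topological Nakayama over $\Lambda(H)$. Steps 1 and 2 are fine. The genuine problem is in Step 3: the claim that $N$ is finite, and the local computation offered for it, are wrong. For $v$ a place of $K^\c$ above $\Sigma\setminus\{\p\}$, the completion $K^\c_v$ is not a local field but the unramified $\Zp$-extension of one, with residue field $\bigcup_n\FF_{q^{p^n}}$. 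When $q\equiv 1\pmod p$ one has $\mu(K^\c_v)[p^\infty]=\QQ_p/\Zp$ (since $v_p(q^{p^n}-1)$ grows with $n$), which is not finite, and correspondingly the tame inertia in the maximal abelian pro-$p$ extension of $K^\c_v$ is a full copy of $\Zp$: the Frobenius of $K^\c_v$ lies in $\prod_{\ell\neq p}\ZZ_\ell$, so in the tame relation it acts trivially on the pro-$p$ part of the inertia generator and nothing forces that generator to have finite order in an abelian pro-$p$ quotient. Hence $N$ can have positive $\Zp$-rank; this is consistent with the paper, where Lemma~\ref{lem:Zprank} and Corollary~\ref{cor:rank} only bound the $\Zp$-\emph{rank} of $B_{n,m}/\IGnm\X$ and never claim finiteness.

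The error is repairable because your argument never actually needs $N$ finite: to pass between ``$\X_{\infty,0}$ finitely generated over $\Zp$'' (which is the reading of ``finite'' the paper uses in its own proof and in the remark of \S\ref{S:reviewGamma}) and ``$\X_H/p\X_H$ finite'' in Step 1, it suffices that $N$ be a finitely generated $\Zp$-module. That much is true and follows from your own setup: there are finitely many such $v$ (by the finite-decomposition hypothesis on $\Sigma$ in $K^\c$), and each contributes a quotient of a procyclic tame inertia group, hence a $\Zp$-module of rank at most one. This weaker statement is precisely what the paper's Corollary~\ref{cor:rank} records and feeds into the same Nakayama argument. So replace ``finite'' by ``finitely generated over $\Zp$'' throughout Step 3 and the proof goes through; as written, the stated local input is false.
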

\begin{proof}
The short exact sequence \eqref{eq:SES} becomes
 \[
0\rightarrow B_{\infty,0}/I_{H}\X \rightarrow\X/I_{H}\X\rightarrow\X_{\infty,0}\rightarrow0
\]
if we take $m=0$ and $n=\infty$. Corollary~\ref{cor:rank} tells us that  the first term of the short exact sequence is finite over $\Zp$. Therefore, the second term is finite over $\Zp$ if and only the  last term is. Suppose that $\X$ is finite over $\Lambda(H)$, then $\X/I_H\X$ is finite over $\Zp$, which gives one implication of the theorem. If on the other hand $\X_{\infty,0}$ is finite over $\Zp$, then so is $\X/I_H\X$.  Consequently, Nakayama's Lemma (c.f. \cite[Lemma~2.6]{coateshowson} or \cite{balisterhowson}) implies that $\X$ is finite over $\Lambda(H)$, which gives the other implication as claimed.
\end{proof}

\section{Growth in the $H$-direction}\label{S:estimateH}
In this section, we fix an integer $n\ge0$ and estimate the growth in $e_{n,m}$ as $m\rightarrow \infty$. Our strategy is to make use of our estimation on $\Bnm/\IGnm$ from \S\ref{S:estimateBnm}, in conjunction with the short exact sequence \eqref{eq:SES}.

Recall that $\X$ is finitely generated over $\Lambda(G)$. Consequently, $\X_{\Gamma_n}$ is a finitely generated $\Lambda(H)$-module. In fact, we can say more:

\begin{lemma}\label{lem:torsion}
The $\Lambda(H)$-module $\X_{\Gamma_n}$ is torsion.
\end{lemma}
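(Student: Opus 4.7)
The plan is a $\Zp$-rank comparison that exploits the bounds on $\X/\IGnm\X$ already established in Section~\ref{S:estimateBnm}. Set $r := \rank_{\Lambda(H)}\X_{\Gamma_n}$; the goal is to show $r = 0$, and the idea is to compute $\rank_{\Zp}(\X_{\Gamma_n})_{H_m}$ in two incompatible ways as $m \to \infty$.

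First I would identify $(\X_{\Gamma_n})_{H_m}$ with $\X/\IGnm\X$. Since $\Gnm = H_m\rtimes \Gamma_n$, any element is of the form $h\gamma^{p^n a}$ with $h \in H_m$, $a \in \Zp$, and the decomposition
\[
(h\gamma^{p^n a} - 1)x = (h-1)(\gamma^{p^n a}x) + (\gamma^{p^n a} - 1)x
\]
gives $\IGnm\X = I_{H_m}\X + I_{\Gamma_n}\X$ (the reverse containment is immediate). Taking the image of $I_{H_m}\X$ in $\X_{\Gamma_n}$ and dividing out,
\[
(\X_{\Gamma_n})_{H_m} \;=\; \X/(I_{\Gamma_n}\X + I_{H_m}\X) \;=\; \X/\IGnm\X.
\]
Then I would invoke \cite[Theorem~1.10]{harris00} for the finitely generated $\Lambda(H) \cong \Lambda(\Zp^{d-1})$-module $\X_{\Gamma_n}$ to conclude that, as $m \to \infty$,
\[
\rank_{\Zp}(\X/\IGnm\X) = r\cdot p^{(d-1)m} + O(p^{(d-2)m}).
\]

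The opposing upper bound would come from the short exact sequence \eqref{eq:SES}. The term $\Xnm$ is finite, being the Hilbert $p$-class group of the number field $\Knm$, and $\Bnm/\IGnm\X$ has $\Zp$-rank $O(p^{(d-2)m})$ by Corollary~\ref{cor:rank}. Thus $\rank_{\Zp}(\X/\IGnm\X) = O(p^{(d-2)m})$, and matching the two expressions forces $r\cdot p^{(d-1)m} = O(p^{(d-2)m})$, which is possible only when $r = 0$.

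The one delicate point is the identification $(\X_{\Gamma_n})_{H_m} \cong \X/\IGnm\X$ at the level of $\Lambda(H)$-modules (so that Harris's formula applies cleanly); this rests on the $\Lambda(H)$-module structure on $\X_{\Gamma_n}$ asserted in the paragraph preceding the lemma. Once that bookkeeping is granted, the rest is a direct comparison of asymptotic Harris-type estimates against the geometric bounds already in hand from Section~\ref{S:estimateBnm}, and no further obstacle arises.
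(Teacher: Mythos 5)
Your proposal is correct and follows essentially the same route as the paper: identify the $H_m$-coinvariants of $\X_{\Gamma_n}$ with $\X/\IGnm\X$, apply the rank formula of \cite[Theorem~1.10]{harris00}, and play it against the bound $\rank_{\Zp}\Bnm/\IGnm\X=O(p^{(d-2)m})$ from Corollary~\ref{cor:rank} together with the finiteness of $\Xnm$ via \eqref{eq:SES}. The only difference is that you spell out the identification $(\X_{\Gamma_n})_{H_m}\cong\X/\IGnm\X$ explicitly, which the paper states without comment.
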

\begin{proof}
Let $M$ be a finitely generated $\Lambda(H)$-module. If $\rank_{\Lambda(H)}M=r$, then 
\[
\rank_{\Zp}M_{H_m}= r\times p^{(d-1)m}+O(p^{(d-2)m})
\]
(c.f. \cite[Theorem~1.10]{harris00}).

The $H_m$-coinvariant of $\X_{\Gamma_n}$ is nothing but $\X/\IGnm\X$.  Since $\Xnm$ is finite, \eqref{eq:SES} tells us that $\X/\IGnm\X$ has the same $\Zp$-rank as $\Bnm/\IGnm\X$.  
Hence the result by Corollary~\ref{cor:rank}.
\end{proof}

We recall the following definition from \cite[\S4]{cuocomonsky}. Let $M$ be a finitely generated torsion $\Lambda(H)$-module. A structure $\cS$ on $M$ consists of a fixed integer $m_0$ together with a finite set of pairs $(\tau_i,M_i)$, where $\tau_i\in H\setminus H_1$ and $M_i$ submodules of $M$. For every structure of $M$, we define for $m\ge m_0$ $$A_m(\cS)=I_{H_m}M+\sum_i\Phi_{m/m_0}(\tau_i)\cdot M_i,$$
where $\Phi_{m/m_0}(X)$ denotes the polynomial $(X^{p^m}-1)/(X^{p^{m_0}}-1)$. Such a structure is said to be \textit{admissible} if $\rank_{\Zp}M/A_m(\cS)=O(p^{(d-3)n})$ (for $d\ge 3$) or $\rank_{\Zp}M/A_m(\cS)=O(1)$ (for $d=2$).

 Let $M$ be a finitely generated $\Zp$-module, we shall write $M_t$ for the torsion submodule of $M$ and $e(M)$ for the $p$-exponent of the order of $M_t$. The following result is proved in \textit{ loc. cit.}

\begin{theorem}\label{thm:CM}
Let $M$ be a finitely generated torsion $\Lambda(H)$-module and $\cS$ an admissible structure on $M$. Then,
\[
e(M/A_m(\cS)) =\mu_H(M)\times p^{(d-1)m}+\lambda_H(M)\times mp^{(d-2)m}+O(p^{(d-2)m})
\]
for some non-negative integers $\mu(M)$ and $\lambda(M)$ that are independent of $m$ and $\cS$.
\end{theorem}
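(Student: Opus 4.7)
The plan is to reconstruct the argument of \cite{cuocomonsky}. The first step is to reduce to elementary modules via pseudo-isomorphism. Using the structure theorem for finitely generated torsion modules over $\Lambda(H) \cong \Zp[[T_1,\ldots,T_{d-1}]]$, there is a pseudo-isomorphism
\[
M \sim \bigoplus_i \Lambda(H)/p^{a_i} \,\oplus\, \bigoplus_j \Lambda(H)/g_j^{b_j},
\]
where the $g_j$ are distinguished irreducible elements. A pseudo-isomorphism has kernel and cokernel of Krull codimension at least two, so (by a direct $\Zp$-rank count on coinvariants) such pseudo-null modules contribute only $O(p^{(d-2)m})$ to $e(M/A_m(\cS))$. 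Hence the top two terms of the asymptotic depend only on the elementary quotient, and any admissible structure on $M$ transports to an admissible structure on the elementary module up to the same order of error.

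The second step computes the contributions summand by summand. For $\Lambda(H)/p^a$, one has $e(\Lambda(H)/(p^a, I_{H_m})) = a\, p^{(d-1)m}$, and admissibility of $\cS$ ensures that adjoining $\sum_i \Phi_{m/m_0}(\tau_i)\cdot M_i$ reduces this only by $O(p^{(d-2)m})$; this yields the $\mu_H(M)$ term. For $\Lambda(H)/g^b$ with $g$ distinguished, a Weierstrass-preparation argument over $\Zp[[T_1,\ldots,T_{d-2}]]$ applied to the variable used to make $g$ distinguished gives
\[
e\bigl(\Lambda(H)/(g^b, I_{H_m})\bigr) = b\deg(g)\cdot m p^{(d-2)m} + O(p^{(d-2)m}),
\]
contributing to the $\lambda_H(M)$ term. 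The third step is independence of $(\mu_H(M),\lambda_H(M))$ from $\cS$: given admissible $\cS_1,\cS_2$, their union is admissible, and the difference in $p$-exponents between $A_m(\cS_1\cup\cS_2)$ and $A_m(\cS_i)$ is controlled by the admissibility hypothesis, hence falls into the error term.

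The main obstacle is the Weierstrass step for the $\Lambda(H)/g^b$ summands in the presence of the extra terms $\Phi_{m/m_0}(\tau_i) M_i$. One must analyse $\Phi_{m/m_0}(\tau_i) \bmod g_j^{b_j}$ uniformly across all irreducible divisors $g_j$ as $m \to \infty$, which forces the choice of the $\tau_i$ (and the role of the integer $m_0$) to be adapted so that the correction ideals are large enough to shrink the $\Zp$-rank of $M/A_m(\cS)$ to the admissibility bound, yet small enough that the $p$-exponent retains the main asymptotic shape. Admissibility is precisely the hypothesis engineered to make this simultaneous control possible, and verifying these bounds via multivariable power series manipulations is the technical heart of the proof.
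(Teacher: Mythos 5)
The paper itself does not reprove this statement: it is quoted from Cuoco--Monsky (Lemma~4.9 and Theorem~4.13 of that paper) for $d\ge 3$ and from Iwasawa's classical theory for $d=2$, so you are in effect attempting to reconstruct the Cuoco--Monsky argument, and your reconstruction has a genuine error at the decisive step, namely the identification of the $\lambda$-term. The claimed formula $e\bigl(\Lambda(H)/(g^b,I_{H_m})\bigr)=b\deg(g)\,mp^{(d-2)m}+O(p^{(d-2)m})$ for a distinguished irreducible $g$ coprime to $p$ is false once $d\ge 3$. Take $d=3$, $\Lambda(H)=\Zp[[S,T]]$ and $g=T^2+S$, distinguished of degree $2$ in $T$. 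Then $\Lambda(H)/(g)\cong\Zp[[T]]$ with $S$ acting as $-T^2$, and the coinvariants are $\Zp[[T]]/\bigl((1+T)^{p^m}-1,\,(1-T^2)^{p^m}-1\bigr)$; evaluating at a $p^m$-th root of unity $\xi$ of exact order $p^b$ one finds $v_p\bigl((2-\xi)^{p^m}-1\bigr)= m-b+O(1)$, and summing over $\xi$ gives $e=O(p^m)$, while the $\Zp$-rank is $1=O(1)$, so the trivial structure is admissible and the true invariant is $\lambda_H=0$, not $2$ (similarly $g=T$ gives $e=0$, not $mp^m$). The contribution of a height-one prime $(g)$ with $g\nmid p$ is governed by the $p$-adic behaviour of $g$ at and near the torsion points of the character variety of $H$ --- how the zero locus of $g$ meets or approaches translates of subtori by roots of unity --- and not by its Weierstrass degree, which is not even intrinsic ($S+T^2$ has degree $1$ in $S$ and degree $2$ in $T$). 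This is precisely the subtlety that Cuoco--Monsky's invariant $l_0$ and the admissible-structure formalism are designed to handle; your closing paragraph correctly flags this as the technical heart, but the concrete recipe you give there would output a value of $\lambda_H(M)$ that is wrong and coordinate-dependent.

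There are also secondary gaps that would need repair even after fixing the $\lambda$-term: the reduction to an elementary module requires bounding the $p$-exponent, not merely the $\Zp$-rank, of $N/A_m(\cS)$ for pseudo-null $N$ by $O(p^{(d-2)m})$, which is itself one of the nontrivial inputs in Cuoco--Monsky; and a structure on $M$ does not transport canonically to the elementary module, since the $M_i$ are submodules of $M$. Your treatment of the $\mu$-part and the independence-of-$\cS$ argument are plausible in outline, but without a correct analysis of the primes $g\nmid p$ whose zero sets pass through or cluster near torsion points, the proposed proof does not establish the theorem.
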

\begin{proof}
This is Lemma~4.9 and Theorem~4.13 in \textit{op. cit.} when $d\ge3$. For the case $d=2$, we have $H=\Zp$ and the result follows from the classical results of \cite{iwasawa}.
\end{proof}

\begin{lemma}\label{lem:structure}
There exists an admissible structure $\cS$ on $\X_{\Gamma_n}$ such that $\Xnm=\X_{\Gamma_n}/A_m(\cS)$ for $m\gg0$.
\end{lemma}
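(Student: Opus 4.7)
The plan is to bootstrap off the short exact sequence~\eqref{eq:SES} together with the explicit description of $\Bnm/\IGnm\X$ in Proposition~\ref{prop:describeBnm}. Since $\X/\IGnm\X$ is canonically $\X_{\Gamma_n}/I_{H_m}\X_{\Gamma_n}$, producing a structure $\cS$ with $\X_{\Gamma_n}/A_m(\cS)\cong\Xnm$ amounts to arranging that $A_m(\cS)$ is the preimage in $\X_{\Gamma_n}$ of the submodule $\Bnm/\IGnm\X\subseteq\X/\IGnm\X$.

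For each $\nu\in\Sigma\setminus\{\p\}$, the inertia group $I_\nu\subseteq\Y$ has $\Zp$-rank one (Lemma~\ref{lem:Zprank}), injects into $G$ because $\M/\Kinf$ is unramified, and, after replacing $K$ by some $K_{n_0,0}$ as in \S\ref{S:ramification}, has image inside $H$. Fix a topological generator $\sigma_\nu=(x_\nu,\tau_\nu)$, necessarily with $\tau_\nu\in H\setminus\{1\}$, and write $\tau_\nu=(\tau_\nu')^{p^{b_\nu}}$ with $\tau_\nu'\in H\setminus H_1$ and $b_\nu\ge 0$. Set $m_0:=\max_\nu b_\nu$. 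For $m\ge m_0$, the intersection $\Inm=I_\nu\cap\Ynm$ is topologically generated by $\sigma_\nu^{p^{m-b_\nu}}$, and an induction in the semidirect product using the convention $(x,g)(y,h)=(x\cdot y^g,gh)$ already employed in Proposition~\ref{prop:describeBnm} yields
\[
\sigma_\nu^{p^{m-b_\nu}}=\bigl(\Phi_{m/b_\nu}(\tau_\nu')\cdot x_\nu,\;\tau_\nu^{p^{m-b_\nu}}\bigr),\qquad\text{where } \Phi_{m/b_\nu}(X):=\frac{X^{p^m}-1}{X^{p^{b_\nu}}-1}.
\]
Because $\tau_\nu^{p^{m-b_\nu}}=(\tau_\nu')^{p^m}\in H_m$ acts trivially on $\X/\IGnm\X$, the binomial expansion of $(\tau_\nu^{p^{m-b_\nu}})^k-1$ collapses and the first coordinate of $\sigma_\nu^{kp^{m-b_\nu}}$ reduces to $k\cdot\Phi_{m/b_\nu}(\tau_\nu')\bar x_\nu$ modulo $\IGnm\X$, where $\bar x_\nu$ denotes the image of $x_\nu$ in $\X_{\Gamma_n}$. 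Together with the $\Lambda(H)$-action and Proposition~\ref{prop:describeBnm}, this gives
\[
\Bnm/\IGnm\X=\sum_{\nu\in\Sigma\setminus\{\p\}}\Phi_{m/b_\nu}(\tau_\nu')\cdot\Lambda(H)\,\bar x_\nu\quad\text{inside }\X/\IGnm\X.
\]

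I then define $\cS$ by the integer $m_0$ together with the pairs $(\tau_\nu',M_\nu)$, where $M_\nu:=\Phi_{m_0/b_\nu}(\tau_\nu')\cdot\Lambda(H)\bar x_\nu\subseteq\X_{\Gamma_n}$. The telescoping identity $\Phi_{m/b_\nu}(X)=\Phi_{m/m_0}(X)\cdot\Phi_{m_0/b_\nu}(X)$ in $\Zp[X]$, valid for $m\ge m_0\ge b_\nu$, forces $A_m(\cS)$ to coincide with the preimage of $\Bnm/\IGnm\X$ in $\X_{\Gamma_n}$, so that \eqref{eq:SES} yields $\X_{\Gamma_n}/A_m(\cS)\cong\Xnm$. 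Admissibility of $\cS$ is then automatic: $\Xnm$ is finite (being the $p$-primary part of the ideal class group of the number field $\Knm$), so $\rank_{\Zp}(\X_{\Gamma_n}/A_m(\cS))=0$, trivially meeting both the $O(p^{(d-3)m})$ bound for $d\ge 3$ and the $O(1)$ bound for $d=2$. The main obstacle I anticipate is the bookkeeping in the middle step: one must verify that the higher binomial terms $(\tau_\nu^{p^{m-b_\nu}}-1)^j$ with $j\ge 2$ and the conjugation corrections $x_\sigma^{(h_\nu-1)h_\sigma^{-1}}$ appearing in the proof of Proposition~\ref{prop:describeBnm} both vanish modulo $\IGnm\X$. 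The key simplification is that $\tau_\nu^{p^{m-b_\nu}}-1$ lies in $I_{H_m}\subseteq\IGnm$ and $h_\nu\in H_m$, so both families of error terms are absorbed into $\IGnm\X$.
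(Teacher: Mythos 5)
Your proof is correct and follows essentially the same route as the paper: both compute $p$-power powers of a topological generator of the rank-one inertia group in the semidirect product to exhibit $\Bnm/\IGnm\X$ as $\sum_\nu\Phi_{m/m_0}\cdot(\text{cyclic }\Lambda(H)\text{-module})$ via Proposition~\ref{prop:describeBnm}, and both deduce admissibility from the finiteness of $\Xnm$. The only difference is cosmetic bookkeeping: the paper renormalizes the generator of $I_{\nu_{n,m_0}}$ at level $m_0$ (absorbing your factor $\Phi_{m_0/b_\nu}(\tau_\nu')$ into its chosen $x_\nu$), whereas you keep the individual exponents $b_\nu$ and put that factor into the submodules $M_\nu$.
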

\begin{proof}
Let $\nu\in\Sigma\setminus\{\p\}$. Then $\bInm$ is a subgroup of $H_m$. Therefore, there exists an integer $m_0$ such that $H_m/\bInm$ is torsion-free for all $m\ge m_0$. Since $\Sigma$ is finite, we may assume that $m_0$ is an integer satisfying this property for all $\nu$.

Recall from the proof of Lemma~\ref{lem:Zprank} that each $\bInm$ is of dimension $1$. Suppose that $I_{\nu_{n,m_0}}={\langle(x_{\nu},h_{\nu})\rangle}$. Then $h_\nu\in H_{m_0}\setminus H_{m_0+1}$. In particular, we may write $h_\nu=k_\nu^{p^{m_0}}$ for some $k_\nu\in H\setminus H_1$. Furthermore, for all $m\ge m_0$, we have $$I_{\nu_{n,m}}={\langle(x_{\nu},h_{\nu})^{p^{m-m_0}}\rangle}={\langle(x_{\nu}^{\Phi_{m/m_0}(k_\nu)},k_\nu^{p^{m}})\rangle}.$$
Therefore, Proposition~\ref{prop:describeBnm} tells us that
\[
\Bnm=\IGnm\X+\sum_{\nu}\Phi_{m/m_0}(k_{\nu})\Lambda(H)\cdot x_{\nu}.
\]
Hence, if we take $\cS=\{m_0,(k_\nu,\Lambda(H)\cdot x_\nu):\nu\in\Sigma\setminus\{\p\}\}$, then $\Bnm=A_m(\cS)$ as $\Xnm\cong\X/\Bnm$ by Lemma~\ref{lem:CFT}. Finally, the structure is admissible because $\Xnm$ is finite by definition.
\end{proof}

\begin{corollary}\label{cor:growthH}
For a fixed $n$, we have the formula
\[
\enm= \mu_H(\X_{\Gamma_n})\times p^{(d-1)m}+\lambda_H(\X_{\Gamma_n})\times mp^{(d-2)m}+O(p^{(d-2)m}).
\]
\end{corollary}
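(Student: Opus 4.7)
The plan is to assemble the pieces already established. By Lemma~\ref{lem:torsion}, $\X_{\Gamma_n}$ is a finitely generated torsion $\Lambda(H)$-module, so it lies in the class of modules to which Cuoco--Monsky's asymptotic formula (Theorem~\ref{thm:CM}) applies. The obstruction to applying that theorem directly is that it requires an \emph{admissible} structure, and that one needs to identify $\Xnm$ with a quotient of the form $\X_{\Gamma_n}/A_m(\cS)$. But this identification is exactly what Lemma~\ref{lem:structure} provides, using the explicit description of $\Bnm$ from Proposition~\ref{prop:describeBnm} together with Lemma~\ref{lem:CFT}.

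With these ingredients in hand, the argument is essentially a one-line chain. First I fix $n$ and let $\cS$ be the admissible structure on $\X_{\Gamma_n}$ constructed in Lemma~\ref{lem:structure}, so that
\[
\Xnm \;\cong\; \X_{\Gamma_n}/A_m(\cS) \quad \text{for } m\gg 0.
\]
Since $\Xnm$ is finite (it is the Hilbert $p$-class group of the number field $\Knm$), its $p$-exponent $\enm$ coincides with $e(\X_{\Gamma_n}/A_m(\cS))$ in the notation preceding Theorem~\ref{thm:CM}. Invoking Theorem~\ref{thm:CM} with $M=\X_{\Gamma_n}$ then yields
\[
\enm \;=\; \mu_H(\X_{\Gamma_n})\times p^{(d-1)m}+\lambda_H(\X_{\Gamma_n})\times mp^{(d-2)m}+O(p^{(d-2)m}),
\]
which is the desired estimate.

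Thus the real work has already been front-loaded: the torsionness in Lemma~\ref{lem:torsion} (which depended on the rank bound Corollary~\ref{cor:rank}) ensures that the Cuoco--Monsky machinery is applicable, and Lemma~\ref{lem:structure} packages $\Xnm$ into exactly the format $M/A_m(\cS)$ that their theorem handles. There is no additional obstacle at this stage; the corollary is a clean combination of Lemmas~\ref{lem:torsion} and \ref{lem:structure} with Theorem~\ref{thm:CM}.
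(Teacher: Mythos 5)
Your argument is correct and is exactly the paper's proof: the paper's own justification is the one-line combination of Lemma~\ref{lem:torsion}, Lemma~\ref{lem:structure}, and Theorem~\ref{thm:CM}, which you have simply spelled out in more detail. No gaps to report.
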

\begin{proof}
This follows from combining Theorem~\ref{thm:CM} with Lemmas~\ref{lem:torsion} and~\ref{lem:structure}.
\end{proof}

\section{Interlude: review on $\Lambda(\Gamma)$-modules}\label{S:reviewGamma}
We identify $\Lambda(\Gamma)$ with the power series ring $\Zp[[X]]$ on choosing a topological generator $\gamma$ of $\Gamma$ and identifying $\gamma-1$ with $X$. We write $\omega_n=(1+X)^{p^n}-1$ for $n\ge0$ and $\Phi_n=\omega_n/\omega_{n-1}$ denotes the cyclotomic polynomial of order  $p^n$ in $1+X$ for $n\ge1$. We shall fix a primitive $p^n$-th root of unity $\zeta_{p^n}$ and write $\epsilon_n=\zeta_{p^n}-1$. Finally, we write $\Phi_{n/n_0}=\omega_n/\omega_{n_0}$ for $n\ge n_0$ as in \S\ref{S:estimateH}.

Let $F\in\Lambda(\Gamma)$. Weierstrass Preparation Theorem tells us that there exists a factorization $F=u\times p^\mu\times g$, where $u\in \Lambda(\Gamma)^\times$, $\mu\in\ZZ_{\ge 0}$ and $g$ is a  distinguished polynomial. We shall write $\mu_\Gamma(F)=\mu$ and $\lambda_\Gamma(F)=\deg(g)$.

If $M$ is a finitely generated torsion $\Lambda(\Gamma)$-module, it is known that there exist $F_1,\ldots,F_r\in\Lambda(\Gamma)$ and an injective $\Lambda(\Gamma)$-morphism
\[
\phi:M/M'\rightarrow \bigoplus_{i=1}^r\Lambda(\Gamma)/(F_i),
\]
where $M'$ denotes the maximal pseudo-null $\Lambda(\Gamma)$-submodule of $M$ and 
 the  cokernel of $\phi$ is pseudo-null.   Note that a pseudo-null $\Lambda(\Gamma)$-module is simply a module over $\Lambda(\Gamma)$ with finite cardinality. 

The $\Lambda(\Gamma)$-ideal generated by the product $\prod_{i=1}^rF_i$ is called the characteristic ideal of $M$.
We write $\mu_\Gamma(M)=\sum\mu_\Gamma(F_i)$ and $\lambda_\Gamma(M)=\sum\lambda_\Gamma(F_i)$. We remark that the condition $\X_{\infty,0}$ being finitely generated over $\Zp$ in Theorem~\ref{thm:MHG} is equivalent to $\mu_\Gamma(\X_{\infty,0})=0$.

The following result of Iwasawa in \cite{iwasawa} is well-known.
\begin{theorem}\label{thm:iwasawa}
Let  $M$  be a finitely generated torsion $\Lambda(\Gamma)$-module.  Then, there exist constants $\nu_\Gamma(M)$ and $n_0$ such that $M_{\Phi_{n/n_0}}$ is finite with
\[
e(M_{\Phi_{n/n_0}})=\mu_\Gamma(M)\times p^n+\lambda_\Gamma(M)\times n+\nu_\Gamma(M)
\]
for all $n\ge n_0$.
\end{theorem}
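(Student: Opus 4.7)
The plan is to invoke the structure theorem recalled above to reduce to cyclic elementary modules $\Lambda(\Gamma)/(F_i)$, and then handle each factor $F_i = p^{\mu_i} g_i$ (arising from Weierstrass preparation) separately. The key tool is the identity $\omega_n = \omega_{n_0}\Phi_{n/n_0}$, which relates $\Phi_{n/n_0}$-coinvariants back to the classical $\omega_n$-coinvariants treated in Iwasawa's original theorem.

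First, the pseudo-isomorphism $M/M' \hookrightarrow E := \bigoplus_i \Lambda(\Gamma)/(F_i)$ has finite kernel and cokernel; multiplication by $\Phi_{n/n_0}$ on these finite pieces becomes the zero map for $n$ sufficiently large (since $\Phi_k \to p$ in the $p$-adic topology as $k \to \infty$ and multiplication by $p$ is nilpotent on any finite abelian $p$-group), so the snake lemma shows that $e(M/\Phi_{n/n_0}M)$ differs from $\sum_i e\bigl(\Lambda(\Gamma)/(F_i, \Phi_{n/n_0})\bigr)$ by a constant for $n \gg 0$. For $F_i = p^{\mu_i}$, the quotient $(\Z/p^{\mu_i})[X]/(\Phi_{n/n_0})$ is free of rank $\deg \Phi_{n/n_0} = p^n - p^{n_0}$ over $\Z/p^{\mu_i}$, giving $p$-exponent $\mu_i p^n - \mu_i p^{n_0}$.

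For the distinguished piece $g_i$, I choose $n_0$ large enough that $g_i$ is coprime in $\Lambda(\Gamma)$ to $\Phi_k$ for every $k > n_0$; this is possible because $g_i$ has only finitely many roots in $\overline{\QQ_p}$, while the roots $\epsilon_k$ of $\Phi_k$ are distinct across $k$ and have $p$-adic valuations tending to $0$. A UFD argument using $\omega_n = \omega_{n_0}\Phi_{n/n_0}$ then shows that multiplication by $\Phi_{n/n_0}$ is injective on $\Lambda(\Gamma)/(g_i,\omega_{n_0})$, yielding the exact sequence
\[
0 \to \Lambda(\Gamma)/(g_i,\omega_{n_0}) \xrightarrow{\times\Phi_{n/n_0}} \Lambda(\Gamma)/(g_i,\omega_n) \to \Lambda(\Gamma)/(g_i,\Phi_{n/n_0}) \to 0.
\]
Combined with Iwasawa's classical $\omega_n$-formula $e(\Lambda(\Gamma)/(g_i,\omega_n)) = \deg(g_i)\cdot n + \mathrm{const}$, this produces $e(\Lambda(\Gamma)/(g_i,\Phi_{n/n_0})) = \lambda_\Gamma(g_i)\cdot n + \mathrm{const}$.

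Summing the $\mu_i p^n$ and $\lambda_\Gamma(g_i)\cdot n$ contributions across all $i$ reproduces $\mu_\Gamma(M)\cdot p^n + \lambda_\Gamma(M)\cdot n$, and the accumulated constants together define $\nu_\Gamma(M)$. The main technical obstacle is choosing a single threshold $n_0$ that simultaneously guarantees coprimality of every $g_i$ with $\Phi_{n/n_0}$, the stabilization of the finite pieces from the pseudo-isomorphism, and the validity of Iwasawa's classical $\omega_n$-formula for each $g_i$; once such an $n_0$ is fixed, the remaining bookkeeping is routine.
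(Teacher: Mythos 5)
Your overall architecture (structure theorem, separate treatment of the $p^{\mu_i}$ and distinguished pieces, finite-error bookkeeping via the snake lemma, and choosing $n_0$ so that each $g_i$ is coprime to $\Phi_k$ for $k>n_0$) is the standard one, and it is close in spirit to the paper, which in fact does not prove the theorem in full: it cites the literature and only sketches the special case where the characteristic ideal is coprime to every $\omega_n$ (Lemma~\ref{lem:kob} through Proposition~\ref{prop:gammagrowth}). However, your treatment of the distinguished factors has a genuine gap exactly in the situation the $\Phi_{n/n_0}$-formulation is designed for. You extract $e\bigl(\Lambda(\Gamma)/(g_i,\Phi_{n/n_0})\bigr)$ from your short exact sequence as the difference $e\bigl(\Lambda(\Gamma)/(g_i,\omega_n)\bigr)-e\bigl(\Lambda(\Gamma)/(g_i,\omega_{n_0})\bigr)$, invoking the ``classical $\omega_n$-formula''. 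That formula presupposes that $\Lambda(\Gamma)/(g_i,\omega_n)$ is finite, i.e.\ that $g_i$ is coprime to $\omega_n$; but for a general torsion module $g_i$ may be divisible by $X$ or by some $\Phi_k$ with $k\le n_0$ (your own choice of $n_0$ tacitly allows this), and then both flanking terms of your exact sequence are infinite, so the difference-of-exponents step collapses even though the sequence itself remains exact and its cokernel is finite. For instance, $M=\Lambda(\Gamma)/(X)\cong\Zp$ has $\Lambda(\Gamma)/(X,\omega_n)\cong\Zp$ infinite for all $n$, while $M_{\Phi_{n/n_0}}\cong\Zp/p^{n-n_0}$ is exactly what the theorem must account for.

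The repair is short and mirrors the paper's own sketch: instead of comparing with the $\omega$-levels, telescope over the individual cyclotomic factors, using the analogous exact sequence relating $\Lambda(\Gamma)/(g_i,\Phi_{(n-1)/n_0})$ and $\Lambda(\Gamma)/(g_i,\Phi_{n/n_0})$ with quotient $\Lambda(\Gamma)/(g_i,\Phi_n)\cong\Zp[\zeta_{p^n}]/(g_i(\epsilon_n))$, whose $p$-exponent is $\ord_{\epsilon_n}g_i(\epsilon_n)=\deg g_i$ once $p^{n-1}(p-1)>\deg g_i$ (this is exactly the computation in Lemma~\ref{lem:kob} and Corollary~\ref{cor:kernel}, with $\Phi_n$ in place of the projection kernel). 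Two smaller points: the justification ``$\Phi_k\to p$ $p$-adically'' is loose --- the clean statement is that $\Phi_{n/n_0}$ is a product of $n-n_0$ elements of the maximal ideal, hence annihilates any fixed finite module for $n-n_0$ large --- and since those finite contributions only stabilize for $n\gg n_0$, your argument as written yields the exact formula only for $n$ large rather than literally for all $n\ge n_0$; this is harmless for the way the theorem is used in the paper but should be acknowledged or absorbed by re-choosing $n_0$.
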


This result has been reproved in many different places, e.g. \cite[\S10.2]{kobayashi03}, \cite[\S5.3]{neukirch} and \cite[\S13.3]{washington}. We shall give a sketch proof in the special case where  the characteristic ideal of $M$ is coprime to $\omega_n$ for all $n$. In doing so, we shall be able to say how large $n$ needs to be to ensure that the formula for $e(M_{\Phi_{n/n_0}})$ holds and give information on $\nu_\Gamma(M)$.

\begin{lemma}\label{lem:kob}
Let $n\ge1$ and $F\in \Lambda(\Gamma)$ with $\gcd(F,\omega_{n})=1$. Consider the projection map
\[
\pi_n:\Lambda(\Gamma)/(F,\omega_n)\rightarrow\Lambda(\Gamma)/(F,\omega_{n-1}).
\]
We have 
\begin{enumerate}[(i)]
\item $\rank_{\Zp}\Lambda(\Gamma)/(F,\omega_{n})=\rank_{\Zp}\Lambda(\Gamma)/(F,\omega_{n-1})=0$;
\item  $\ker\pi_n$ is finite and   $e(\ker\pi_n)=\ord_{\epsilon_n}F(\epsilon_n)$.
\end{enumerate}
\end{lemma}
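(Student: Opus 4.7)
The plan is as follows. For part (i), I would invoke the Weierstrass division theorem, which presents $\Lambda(\Gamma)/\omega_n$ as a free $\Zp$-module of rank $p^n$ (since $\omega_n$ is a distinguished polynomial of that degree). Multiplication by $F$ on $\Lambda(\Gamma)/\omega_n$ is $\Zp$-linear and injective: if $Fg\in\omega_n\Lambda(\Gamma)$, then $\omega_n\mid Fg$ in the UFD $\Lambda(\Gamma)$, and the coprimality hypothesis $\gcd(F,\omega_n)=1$ forces $\omega_n\mid g$. Any injective $\Zp$-linear endomorphism of a finite-rank free $\Zp$-module has finite cokernel, so $\Lambda(\Gamma)/(F,\omega_n)$ is finite and in particular has $\Zp$-rank zero. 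The identical argument works for $\omega_{n-1}$, as $\gcd(F,\omega_{n-1})$ divides $\gcd(F,\omega_n)=1$.

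For part (ii), the projection $\pi_n$ is plainly surjective, so its kernel equals $(F,\omega_{n-1})/(F,\omega_n)$. The central step is to construct an isomorphism
\[
\psi\colon \Lambda(\Gamma)/(F,\Phi_n)\longrightarrow (F,\omega_{n-1})/(F,\omega_n),
\]
sending $x$ to the class of $\omega_{n-1}x$. This is well-defined because $\omega_{n-1}\Phi_n=\omega_n$, and surjective because any $aF+b\omega_{n-1}$ reduces modulo $(F,\omega_n)$ to $\psi(b)$. Injectivity is the one delicate point: from $\omega_{n-1}x=aF+b\omega_n$ one obtains $\omega_{n-1}(x-b\Phi_n)=aF$, and $\gcd(\omega_{n-1},F)=1$ in the UFD $\Lambda(\Gamma)$ forces $\omega_{n-1}\mid a$, whence $x\in(F,\Phi_n)$.

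It remains to identify $\Lambda(\Gamma)/\Phi_n$ with $\Zp[\zeta_{p^n}]$ via $X\mapsto\epsilon_n$. This is valid because $\Phi_n$ is a distinguished polynomial of degree $p^{n-1}(p-1)=[\QQ_p(\zeta_{p^n}):\QQ_p]$ that vanishes at $\epsilon_n$ (since $\omega_n(\epsilon_n)=0$ while $\omega_{n-1}(\epsilon_n)\ne 0$), so $\Phi_n$ is the minimal polynomial of $\epsilon_n$ over $\Zp$. Consequently $\Lambda(\Gamma)/(F,\Phi_n)\cong \Zp[\zeta_{p^n}]/(F(\epsilon_n))$; here $F(\epsilon_n)\ne 0$, as otherwise $\Phi_n\mid F$, contradicting $\gcd(F,\omega_n)=1$. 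Since $\Zp[\zeta_{p^n}]$ is a DVR with uniformizer $\epsilon_n$ and residue field $\Fp$, this quotient has order $p^{\ord_{\epsilon_n}F(\epsilon_n)}$, yielding the claimed formula. I expect the injectivity of $\psi$ to be the main obstacle; the rest reduces to standard Weierstrass preparation and the computation of cyclotomic DVR quotients.
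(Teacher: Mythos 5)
Your proof is correct, and it is essentially the standard argument: the paper itself does not prove this lemma but refers to \cite[\S13.3]{washington} and \cite[Lemma~10.5]{kobayashi03}, where the same route is taken (splitting $\omega_n=\omega_{n-1}\Phi_n$, identifying the kernel with $\Lambda(\Gamma)/(F,\Phi_n)\cong\Zp[\zeta_{p^n}]/(F(\epsilon_n))$, and computing the order via the valuation of $F(\epsilon_n)$ in the DVR $\Zp[\zeta_{p^n}]$). The delicate points you flag (injectivity of $\psi$, and $F(\epsilon_n)\neq 0$ from $\gcd(F,\omega_n)=1$) are handled correctly.
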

\begin{proof}
This is well-known. See for example \cite[\S13.3]{washington} or  \cite[Lemma~10.5]{kobayashi03}. 
\end{proof}

\begin{corollary}\label{cor:kernel}
Under the same notation as Lemma~\ref{lem:kob}, if $$F=u\times p^\mu\times\prod_{i=1}^rF_i,$$ where $u\in\Lambda(\Gamma)^\times$, $\mu=\mu_\Gamma(F)$ and $F_i$ are distinguished polynomials of degree $d_i$, then 
\[
e(\ker\pi_n)=\mu\times p^{n-1}(p-1)+\lambda_\Gamma(F)
\]
whenever $p^{n-1}(p-1)> d_i$ for $i=1,\ldots,r$.
\end{corollary}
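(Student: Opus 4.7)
The plan is to use Lemma~\ref{lem:kob}(ii), which reduces the problem to computing $\ord_{\epsilon_n} F(\epsilon_n)$, where $\ord_{\epsilon_n}$ is the valuation on $\ZZ_p[\zeta_{p^n}]$ normalized so that $\epsilon_n$, being a uniformizer, has valuation $1$. Since this valuation is multiplicative, I would factor $F = u \times p^\mu \times \prod_{i=1}^r F_i$ and evaluate each factor at $\epsilon_n$ separately.

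First, the unit $u\in\Lambda(\Gamma)^\times$ has $u(\epsilon_n)\in\ZZ_p[\zeta_{p^n}]^\times$, so it contributes $0$. Second, the ramification index of $\QQ_p(\zeta_{p^n})/\QQ_p$ equals $[\QQ_p(\zeta_{p^n}):\QQ_p]=p^{n-1}(p-1)$, so $\ord_{\epsilon_n}(p)=p^{n-1}(p-1)$ and the factor $p^\mu$ contributes $\mu\times p^{n-1}(p-1)$.

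Third, for a distinguished polynomial $F_i(X)=X^{d_i}+a_{d_i-1}X^{d_i-1}+\cdots+a_0$ with $p\mid a_j$ for $j<d_i$, I evaluate
\[
F_i(\epsilon_n)=\epsilon_n^{d_i}+\sum_{j=0}^{d_i-1}a_j\epsilon_n^j.
\]
The leading term $\epsilon_n^{d_i}$ has valuation $d_i$, while each remaining term satisfies $\ord_{\epsilon_n}(a_j\epsilon_n^j)\ge p^{n-1}(p-1)+j\ge p^{n-1}(p-1)$. Under the hypothesis $p^{n-1}(p-1)>d_i$, the leading term strictly dominates, so $\ord_{\epsilon_n}(F_i(\epsilon_n))=d_i$ by the ultrametric inequality. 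Summing over $i$ gives $\sum_i d_i = \lambda_\Gamma(F)$.

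Adding up the three contributions yields
\[
e(\ker\pi_n)=\ord_{\epsilon_n}F(\epsilon_n)=\mu\times p^{n-1}(p-1)+\lambda_\Gamma(F),
\]
as required. The only subtle point is the strict-dominance step for each $F_i$, which is exactly what forces the hypothesis $p^{n-1}(p-1)>d_i$; everything else is bookkeeping once one identifies $\ord_{\epsilon_n}(p)$ with the ramification index.
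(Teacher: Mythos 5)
Your argument is correct and is essentially the paper's own proof: both evaluate each factor of $F=u\times p^\mu\times\prod_i F_i$ at $\epsilon_n$, note $\ord_{\epsilon_n}(u(\epsilon_n))=0$ and $\ord_{\epsilon_n}(p)=p^{n-1}(p-1)$, and use the ultrametric inequality with the hypothesis $p^{n-1}(p-1)>d_i$ to get $\ord_{\epsilon_n}F_i(\epsilon_n)=d_i$ (the paper writes $F_i=X^{d_i}+pG_i$, which is the same dominance argument). No gaps; nothing further is needed.
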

\begin{proof}
 Firstly, it is immediate that $\ord_{\epsilon_n}(u)=0$ and $\ord_{\epsilon_n}p^\mu=\mu\times p^{n-1}(p-1)$. Secondly, for each $i$, we may write  $F_i$ as $X^{d_i}+pG_i$ for some polynomial $G_i$ defined over $\Zp$ with degree $<d_i$. As $$d_i=\ord_{\epsilon_n}(\epsilon_n^{d_i})<p^{n-1}(p-1)\le\ord_{\epsilon_n}(pG_i(\epsilon_n)),$$
we have $\ord_{\epsilon_n}F_i(\epsilon_n)=\deg F_i$. Hence the result.
\end{proof}

\begin{corollary}\label{cor:cyclicformula}
Suppose that $F$ is as in Corollary~\ref{cor:kernel}, then
\[
e(\Lambda(\Gamma)/(F,\omega_n))-e(\Lambda(\Gamma)/(F,\omega_{n_0}))=\mu\times (p^n-p^{n_0})+\lambda_\Gamma(F)\times (n-n_0)
\]
for all $n\ge n_0$, where $n_0$ is a fixed integer satisfying $p^{n_0-1}(p-1)>d_i$ for $i=1,\ldots, r$.
\end{corollary}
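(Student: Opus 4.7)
The plan is to derive the formula by telescoping the tower of finite quotients $\Lambda(\Gamma)/(F,\omega_k)$ for $k=n_0,n_0+1,\ldots,n$, using Corollary~\ref{cor:kernel} to compute the size of each successive layer. The first step is to observe that, since $\omega_{k-1}$ divides $\omega_k$ in $\Lambda(\Gamma)$, the natural map
\[
\pi_k:\Lambda(\Gamma)/(F,\omega_k)\twoheadrightarrow\Lambda(\Gamma)/(F,\omega_{k-1})
\]
is surjective, and both source and target are finite by Lemma~\ref{lem:kob}(i). Hence we obtain a short exact sequence of finite abelian $p$-groups
\[
0\to\ker\pi_k\to\Lambda(\Gamma)/(F,\omega_k)\to\Lambda(\Gamma)/(F,\omega_{k-1})\to0,
\]
and the additivity of the $p$-exponent $e(\cdot)$ on such sequences gives
\[
e(\Lambda(\Gamma)/(F,\omega_k))-e(\Lambda(\Gamma)/(F,\omega_{k-1}))=e(\ker\pi_k).
\]

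Next, I would invoke Corollary~\ref{cor:kernel} to evaluate $e(\ker\pi_k)$ for every $k$ in the range $n_0+1\le k\le n$. The hypothesis $p^{n_0-1}(p-1)>d_i$ for all $i$ automatically propagates to $p^{k-1}(p-1)>d_i$ whenever $k\ge n_0+1$, so the corollary applies and yields $e(\ker\pi_k)=\mu\times p^{k-1}(p-1)+\lambda_\Gamma(F)$ at every such level. Telescoping the identity from the previous paragraph over $k=n_0+1,\ldots,n$ and using the geometric sum $\sum_{k=n_0+1}^{n}p^{k-1}=(p^n-p^{n_0})/(p-1)$ produces
\[
e(\Lambda(\Gamma)/(F,\omega_n))-e(\Lambda(\Gamma)/(F,\omega_{n_0}))=\mu(p-1)\cdot\frac{p^n-p^{n_0}}{p-1}+\lambda_\Gamma(F)(n-n_0),
\]
which simplifies to the asserted formula.

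I do not anticipate any genuine difficulty here: once the kernel computation in Corollary~\ref{cor:kernel} is available, this corollary is essentially a bookkeeping exercise. The only point that requires a brief verification is that the numerical hypothesis on $n_0$ is preserved under incrementing $k$, which is obvious since $p^{k-1}(p-1)$ is strictly increasing in $k$. Thus the entire argument reduces to writing down the telescoping sum and evaluating a geometric series.
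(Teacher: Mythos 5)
Your proof is correct and follows essentially the same route as the paper: the paper's argument is exactly this telescoping of $e(\ker\pi_m)=e(\Lambda(\Gamma)/(F,\omega_m))-e(\Lambda(\Gamma)/(F,\omega_{m-1}))$ for $m=n_0+1,\ldots,n$, evaluated via Corollary~\ref{cor:kernel} and summed. The only difference is that you spell out the geometric series and the propagation of the hypothesis on $n_0$, which the paper leaves implicit.
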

\begin{proof}
For $m\in\{n,n-1,\ldots n_0+1\}$, Lemma~\ref{lem:kob}(ii) tells us that
\[
e(\ker\pi_m)=e(\Lambda(\Gamma)/(F,\omega_m))-e(\Lambda(\Gamma)/(F,\omega_{m-1})).
\]
Hence the result by Corollary~\ref{cor:kernel}.
\end{proof}

\begin{lemma}\label{lem:pseudosame}
Let $M$ be a finitely generated torsion $\Lambda(\Gamma)$-module, with maximal pseudo-null submodule $M'$. Let $\phi:M/M'\rightarrow \Lambda/(F)$ be an injective $\Lambda(\Gamma)$-morphism with finite cokernel. Suppose that $\gcd(F,\omega_m)=1$ for all $m\ge1$. Then,  $M_{\Gamma_n}$ is finite and
\[
e(M_{\Gamma_n})=e(\Lambda/(F,\omega_n))+e(M'_{\Gamma_n})
\]
for any integer $n\ge1$.
\end{lemma}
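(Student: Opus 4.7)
The plan is to deduce the formula from two applications of the snake lemma to multiplication by $\omega_n$, using the short exact sequences
\[
0 \to M' \to M \to M/M' \to 0 \quad \text{and} \quad 0 \to M/M' \xrightarrow{\phi} \Lambda/(F) \to C \to 0,
\]
where $C := \coker(\phi)$, which is finite by hypothesis. For any $\Lambda(\Gamma)$-module $N$, I would write $N_{\Gamma_n} = N/\omega_n N$ and $N^{\Gamma_n} = N[\omega_n]$ for the kernel of multiplication by $\omega_n$, using that $\gamma^{p^n}-1 = \omega_n$ under the identification of \S\ref{S:reviewGamma}.

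First I would exploit the coprimality hypothesis: since $\gcd(F,\omega_n)=1$, multiplication by $\omega_n$ is injective on $\Lambda/(F)$, so $(\Lambda/(F))^{\Gamma_n}=0$, and by injectivity of $\phi$ also $(M/M')^{\Gamma_n}=0$. Applying the snake lemma to the second short exact sequence then produces
\[
0 \to C^{\Gamma_n} \to (M/M')_{\Gamma_n} \to \Lambda/(F,\omega_n) \to C_{\Gamma_n} \to 0.
\]
Finiteness of $C$ yields $|C^{\Gamma_n}|=|C_{\Gamma_n}|$, since multiplication by $\omega_n$ is an endomorphism of a finite abelian group and hence has kernel and cokernel of equal order; in particular $e(C^{\Gamma_n})=e(C_{\Gamma_n})$. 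Combined with Lemma~\ref{lem:kob}(i), which gives the finiteness of $\Lambda/(F,\omega_n)$, this shows $(M/M')_{\Gamma_n}$ is finite with
\[
e((M/M')_{\Gamma_n}) = e(\Lambda/(F,\omega_n)).
\]

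Next I would apply the snake lemma to the first short exact sequence. Using the vanishing $(M/M')^{\Gamma_n}=0$ established above, the resulting six-term exact sequence collapses to
\[
0 \to M'_{\Gamma_n} \to M_{\Gamma_n} \to (M/M')_{\Gamma_n} \to 0.
\]
Taking $p$-exponents gives $e(M_{\Gamma_n}) = e(M'_{\Gamma_n}) + e((M/M')_{\Gamma_n})$, which combined with the previous formula yields the desired identity.

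I do not anticipate any genuine obstacle: the argument is a clean two-step diagram chase. The only delicate point is that the coprimality hypothesis is precisely what makes $\omega_n$ act injectively on $\Lambda/(F)$, and hence on the submodule $M/M'$, causing the two snake-lemma sequences to degenerate into manageable short exact sequences. Without it, $\omega_n$-torsion in $M/M'$ would contribute additional terms that could not be cleanly identified with $e(\Lambda/(F,\omega_n))$.
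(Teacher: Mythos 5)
Your argument is correct and is essentially the paper's own proof: both apply the snake lemma to $0\to M/M'\xrightarrow{\phi}\Lambda/(F)\to C\to 0$ to identify $e((M/M')_{\Gamma_n})$ with $e(\Lambda/(F,\omega_n))$ using the finiteness of $C$ and the coprimality of $F$ with $\omega_n$, and then to $0\to M'\to M\to M/M'\to 0$, using the injectivity of $\omega_n$ on $M/M'$ (inherited from $\Lambda/(F)$ via $\phi$), to split off $e(M'_{\Gamma_n})$. The only difference is cosmetic: you make the vanishing of $(M/M')^{\Gamma_n}$ explicit, which the paper leaves implicit.
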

\begin{proof}
Let $C$ be the cokernel of $\phi$.  Our assumption on $F$ implies that 
\[
\Lambda(\Gamma)/(F)\stackrel{\omega_n}{\longrightarrow} \Lambda(\Gamma)/(F)
\]
is injective. On applying the snake lemma to the short exact sequence $0\rightarrow M/M'\rightarrow  \Lambda/(F) \rightarrow C\rightarrow0$, we have the exact sequence
\[
0\rightarrow C^{\Gamma_n}\rightarrow(M/M')_{\Gamma_n}\rightarrow \Lambda/(F,\omega_n) \rightarrow C_{\Gamma_n}\rightarrow 0.
\]
Since $C$ is finite, the first and the last terms of the exact sequence have the same cardinality. Since $F$ is coprime to $\omega_n$, $\Lambda/(F,\omega_n)$ is finite and hence have the same cardinality as $(M/M')_{\Gamma_n}$.

Since $M/M'$ injects into $\Lambda(\Gamma)/(F)$, the fact that multiplication by $\omega_n$ is injective on $\Lambda(\Gamma)/(F)$ means that it is also injective on $M/M'$.  Therefore, if we apply the snake lemma to $0\rightarrow M'\rightarrow M\rightarrow M/M'\rightarrow0$, we have
\[
e(M_{\Gamma_n})=e((M/M')_{\Gamma_n})+e((M')_{\Gamma_n}),
\]
which implies the result. 
\end{proof}

We note that  $M'_{\Gamma_n}=M'$ for $n\gg0$ (see for example \cite[Lemma~5.3.14(v)]{neukirch}).

\begin{prop}\label{prop:gammagrowth}
Let $M$ be a finitely generated $\Lambda(\Gamma)$-module. Let $F\in\Lambda(\Gamma)$ be a generator of its characteristic ideal.  Suppose that  $\gcd(F,\omega_m)=1$ for all integers $m\ge1$. Let  $n_0$ be an integer such that all the irreducible distinguished polynomials that divide  $F$ have degree $<p^{n_0-1}(p-1)$. Then, $$e(M_{\Gamma_n})-e(M_{\Gamma_{n_0}})=\mu_\Gamma(M)\times(p^n-p^{n_0})+\lambda_\Gamma(M)\times(n-n_0)+e(M'_{\Gamma_n})-e(M'_{\Gamma_{n_0}})$$
for all $n\ge n_0$.
\end{prop}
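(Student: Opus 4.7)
My plan is to reduce the claim to the cyclic case via the structure theorem for finitely generated torsion $\Lambda(\Gamma)$-modules and then apply Corollary~\ref{cor:cyclicformula} to each summand. First, the structure theorem recalled in this section provides $F_1,\ldots,F_r\in\Lambda(\Gamma)$ with $F=\prod_i F_i$ a characteristic generator, together with an injection $\phi:M/M'\hookrightarrow\bigoplus_{i=1}^r\Lambda(\Gamma)/(F_i)$ whose cokernel $C$ is pseudo-null and hence finite. The hypothesis $\gcd(F,\omega_n)=1$ for all $n\ge 1$ forces $\gcd(F_i,\omega_n)=1$ for every $i$ and every $n\ge 1$, so multiplication by $\omega_n$ is injective on each summand $\Lambda(\Gamma)/(F_i)$, and consequently on $M/M'$.

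Next, I would upgrade the proof of Lemma~\ref{lem:pseudosame} to accommodate a direct sum on the right. Applying the snake lemma to the $\omega_n$-multiplication on $0\to M/M'\to\bigoplus_i\Lambda(\Gamma)/(F_i)\to C\to 0$ produces a four-term exact sequence whose extreme terms are $C^{\Gamma_n}$ and $C_{\Gamma_n}$; finiteness of $C$ implies $|C^{\Gamma_n}|=|C_{\Gamma_n}|$, leaving
\[
e((M/M')_{\Gamma_n})=\sum_i e\bigl(\Lambda(\Gamma)/(F_i,\omega_n)\bigr).
\]
Running the analogous snake-lemma argument on $0\to M'\to M\to M/M'\to 0$ then gives $e(M_{\Gamma_n})=e((M/M')_{\Gamma_n})+e(M'_{\Gamma_n})$, exactly as in Lemma~\ref{lem:pseudosame}.

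Finally, the hypothesis on $n_0$ transfers to each $F_i$, so Corollary~\ref{cor:cyclicformula} applies and yields, for each $i$ and every $n\ge n_0$,
\[
e\bigl(\Lambda(\Gamma)/(F_i,\omega_n)\bigr)-e\bigl(\Lambda(\Gamma)/(F_i,\omega_{n_0})\bigr)=\mu_\Gamma(F_i)(p^n-p^{n_0})+\lambda_\Gamma(F_i)(n-n_0).
\]
Summing over $i$ and using the definitional additivity $\mu_\Gamma(M)=\sum_i\mu_\Gamma(F_i)$ and $\lambda_\Gamma(M)=\sum_i\lambda_\Gamma(F_i)$, then subtracting the displayed identity of the previous paragraph at $n_0$ from its version at $n$, delivers the claimed formula. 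The only mild subtlety is the passage from a single quotient $\Lambda(\Gamma)/(F)$ in Lemma~\ref{lem:pseudosame} to the direct sum $\bigoplus_i\Lambda(\Gamma)/(F_i)$; once one verifies injectivity of $\omega_n$ on each summand and the equality $|C^{\Gamma_n}|=|C_{\Gamma_n}|$, the rest is bookkeeping.
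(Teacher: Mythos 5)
Your proof is correct and follows essentially the same route as the paper, which deduces the proposition directly from Corollary~\ref{cor:cyclicformula} and Lemma~\ref{lem:pseudosame}. The only difference is that you explicitly rerun the snake-lemma argument for the direct sum $\bigoplus_i\Lambda(\Gamma)/(F_i)$ (and note that the degree hypothesis passes to the irreducible factors of each $F_i$), a step the paper leaves implicit by calling the result an immediate consequence of those two statements.
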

\begin{proof}
This is an immediate consequence of Corollary~\ref{cor:cyclicformula} and Lemma~\ref{lem:pseudosame}.
\end{proof}

\section{Estimating the growth of $\enn$ when $d=2$}\label{S:enn}
Throughout this section, we assume that $d=2$. Let $m,n\ge0$ be integers and consider the $\Lambda(\Gamma)$-module
\[
M_m:=\X_{\infty,m}\cong\X/B_{\infty,m},
\]
where $B_{\infty,m}$ is as defined in \S\ref{S:estimateBnm}. By definition, this is the Galois group of the maximal pro-$p$ unramified extension of $K_{\infty,m}$.
Then, on taking $\Gamma_n$-coinvariant, we have
\[
(M_m)_{\Gamma_n}=\X/\langle I_{\Gamma_n}\X,B_{\infty,m}\rangle\cong \Xnm
\]
thanks to Lemma~\ref{lem:CFT}. As $\Xnm$ is finite,  $M_m$ is  a finitely generated $\Lambda(\Gamma)$-module whose characteristic ideal is coprime to $\omega_n$ for all $n\ge1$. 
We deduce from Proposition~\ref{prop:gammagrowth}  that
for a fixed $m$, there exists an integer $n_m$ such that for all  $n\ge n_{m}$,
\begin{equation}\label{eq:gammagrowth}
\enm-e_{n_{m},m}=\mu_{\Gamma}(M_m)\times(p^n- p^{n_{m}})+\lambda_\Gamma(M_m)\times (n-n_{m})+e'_{n,m}-e_{n_m,m}',
\end{equation}
where $e_{n,m}'=e((M_m')_{\Gamma_n})$, with $M_{m}'$ being the maximal pseudo-null submodule of $M_m$. We shall study how  $\lambda_\Gamma(M_m)$, $\mu_\Gamma(M_m)$, $e(M_{m}')$ and  $n_m$ vary in $m$.

\subsection{Estimating Iwasawa invariants}
 In this section, we assume that $\X\in\MHG$, where $\MHG$ is the category as defined in the introduction. 
Let us recall the definition of $\mu$-invariants of finitely generated $\Lambda(G)$-modules.
Let $M$ be a finitely generated $\Lambda(G)$-module that is $\Zp$-torsion. It is proved in \cite{venjakob02} that $M$ is pseudo-isomorphic to
\[
\bigoplus_i \Lambda(G)/p^{n_i}
\]
for some integers $n_i$. We have the $\mu$-invariant $\mu_G(M):=\sum n_i$. More generally, if $M$ is a finitely generated $\Lambda(G)$-module. We define $\mu_G(M):=\mu_G(M(p))$.

We shall write $\tX$ for the quotient $\X/\X(p)$, which is finitely generated over $\Lambda(H)$ by our $\MHG$-hypothesis. Let $\tau_\X$ denote the $\Lambda(H)$-rank of $\tX$. We have the following short exact sequence
\begin{equation}\label{eq:XP}
0\rightarrow \X(p)\rightarrow \X\rightarrow \tX\rightarrow 0.
\end{equation}

\begin{prop}\label{prop:growthlambdamu}
We have
\[
\lambda_\Gamma(\X_{H_m})=\tau_\X\times p^{(d-1)m}+O(1),\quad \mu_\Gamma(\X_{H_m})=\mu_G(\X)\times p^{m}
\]
\end{prop}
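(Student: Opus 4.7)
The strategy is to exploit the short exact sequence \eqref{eq:XP} to separate the contribution of the $p$-primary torsion $\X(p)$ from that of the $p$-torsion-free quotient $\tX$, and then compute $\mu_\Gamma$ and $\lambda_\Gamma$ of the $H_m$-coinvariants piece by piece. Taking $H_m$-coinvariants of \eqref{eq:XP} produces the four-term exact sequence
\[
H_1(H_m,\tX)\rightarrow \X(p)_{H_m}\rightarrow \X_{H_m}\rightarrow\tX_{H_m}\rightarrow 0.
\]
Since $d=2$ forces $H\cong\Zp$, the Koszul resolution of $\Zp$ over $\Lambda(H_m)$ identifies $H_1(H_m,\tX)$ with $\tX[\omega_m]$, a submodule of the $p$-torsion-free module $\tX$, hence $p$-torsion-free and finitely generated over $\Zp$. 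The image of the connecting map in the $p^\infty$-torsion module $\X(p)_{H_m}$ is then a quotient of a finitely generated $p$-torsion-free $\Zp$-module that embeds into a $\Zp$-torsion module, and so must be finite. Finite $\Lambda(\Gamma)$-modules are pseudo-null and therefore contribute $0$ to $\mu_\Gamma$ and to $\lambda_\Gamma$, so by additivity of these invariants across short exact sequences of torsion $\Lambda(\Gamma)$-modules it is enough to compute them separately on $\X(p)_{H_m}$ and $\tX_{H_m}$.

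For $\X(p)_{H_m}$, the plan is to invoke Venjakob's structure theorem from \cite{venjakob02} to obtain a pseudo-isomorphism $\X(p)\to\bigoplus_i\Lambda(G)/p^{n_i}$ with $\sum_i n_i=\mu_G(\X)$. The key algebraic observation is that $\Lambda(G)/I_{H_m}\Lambda(G)=\Lambda(G/H_m)$ is free of rank $|H/H_m|=p^m$ over $\Lambda(\Gamma)$, because $G/H_m\cong (H/H_m)\rtimes\Gamma$ with $H/H_m$ finite. Hence $(\Lambda(G)/p^{n_i})_{H_m}\cong(\Lambda(\Gamma)/p^{n_i})^{\oplus p^m}$ as $\Lambda(\Gamma)$-modules, contributing $\mu_\Gamma=n_i p^m$ and $\lambda_\Gamma=0$. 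Summing over $i$, and using that $\X(p)_{H_m}$ is automatically $p^\infty$-torsion (so its $\Lambda(\Gamma)$-$\lambda$-invariant vanishes), gives $\mu_\Gamma(\X(p)_{H_m})=\mu_G(\X)\cdot p^m$.

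For $\tX_{H_m}$, I would use that $\Lambda(H)=\Zp[[T]]$ is a two-dimensional regular local ring and that $\tX$ is $p$-torsion-free and finitely generated of $\Lambda(H)$-rank $\tau_\X$. The classical structure theorem yields a pseudo-isomorphism $\tX \sim \Lambda(H)^{\tau_\X}\oplus\bigoplus_j\Lambda(H)/(g_j)$ with distinguished polynomials $g_j$. After $H_m$-coinvariants, the free part becomes $\Zp[H/H_m]^{\tau_\X}$, a $p$-torsion-free $\Zp$-module of rank $\tau_\X p^m$, giving $\lambda_\Gamma=\tau_\X p^m$ and $\mu_\Gamma=0$; each torsion piece becomes $\Lambda(H)/(g_j,\omega_m)$, a finitely generated $\Zp$-module of rank at most $\deg g_j$, contributing $O(1)$ to $\lambda_\Gamma$ and $0$ to $\mu_\Gamma$. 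Assembling the two pieces gives the proposition.

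The main obstacle I anticipate is the bookkeeping of pseudo-null contributions coming from Venjakob's pseudo-isomorphism over the non-commutative algebra $\Lambda(G)$: one must verify that any pseudo-null $\Lambda(G)$-module $N$ has $N_{H_m}$ which is finitely generated over $\Zp$ (hence with $\mu_\Gamma=0$ and $\lambda_\Gamma=O(1)$) in order to get the claimed exact formula $\mu_\Gamma(\X_{H_m})=\mu_G(\X)\cdot p^m$ without error term. In the two-dimensional setting $G\cong\Zp\rtimes\Zp$ this should follow from the fact that a pseudo-null $\Lambda(G)$-module has support of codimension at least two, which upon reducing modulo $I_{H_m}$ collapses to a $\Zp$-finite quotient of bounded rank independent of $m$.
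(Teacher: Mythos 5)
Your reduction step is the same as the paper's: taking $H_m$-homology of \eqref{eq:XP}, using that $H_1(H_m,\tX)$ is finitely generated over $\Zp$ (this needs $\tX$ finitely generated over $\Lambda(H)$, i.e.\ the $\MHG$ hypothesis, not just that it is a submodule of $\tX$) to see that the correction terms affect neither $\mu_\Gamma$ nor the $\Zp$-rank. Your treatment of $\tX_{H_m}$ is also fine; it is a structure-theorem version of the paper's appeal to Harris's rank formula and gives $\lambda_\Gamma(\X_{H_m})=\tau_\X p^m+O(1)$, $\mu_\Gamma(\tX_{H_m})=0$ for $d=2$.

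The genuine gap is exactly the point you flag at the end, and your proposed resolution of it is false. It is not true that a pseudo-null $\Lambda(G)$-module has $H_m$-coinvariants that are finitely generated over $\Zp$: take $N=\Lambda(G)/\bigl(p\Lambda(G)+I_H\Lambda(G)\bigr)\cong\Fp[[\Gamma]]$. This $N$ is killed by $p$ and by $I_H$, so its canonical codimension is $2$ and it is pseudo-null over $\Lambda(G)$ (for $d=2$); yet $H_m$ acts trivially, so $N_{H_m}=\Fp[[\Gamma]]$ for every $m$, which is infinite over $\Zp$ and has $\mu_\Gamma=1$. In other words, ``codimension two'' can sit entirely transverse to the $\Gamma$-direction, and reducing modulo $I_{H_m}$ then collapses nothing. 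Consequently, d\'evissage through Venjakob's pseudo-isomorphism $\X(p)\sim\bigoplus_i\Lambda(G)/p^{n_i}$ only yields $\mu_\Gamma(\X(p)_{H_m})=\mu_G(\X)\,p^m+O(1)$ (the pseudo-null kernel and cokernel, and the $H_1$ of the cokernel, can each contribute a nonzero but bounded amount of $\mu_\Gamma$), not the exact equality asserted in the proposition. The paper does not run this d\'evissage at all: it quotes a lemma of Coates--Kim giving $\mu_{H_m\rtimes\Gamma}(\X)=\mu_\Gamma\bigl(\X(p)_{H_m}\bigr)$ directly, and then the behaviour of $\mu_G$ under passage to the open subgroup $H_m\rtimes\Gamma$ of index $p^m$. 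If you want a self-contained argument you would need a statement of that strength --- e.g.\ a homological formula for $\mu$ that controls the $H_1(H_m,\cdot)$ and pseudo-null contributions --- rather than the codimension heuristic, which the example above defeats. (Your computation that $\Lambda(G/H_m)$ is $\Lambda(\Gamma)$-free of rank $p^m$, giving the main term $\mu_G(\X)p^m$, is correct as far as it goes.)
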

\begin{proof}
From \eqref{eq:XP}, there is a long exact sequence
\[
H_1(H_m,\tX)\rightarrow \X(p)_{H_m}\rightarrow \X_{H_m}\rightarrow\tX_{H_m}\rightarrow 0.
\]
Since $\X(p)$ is $\Zp$-torsion, this tells us that
\[
\rank_{\Zp}\X_{H_m}=\rank_{\Zp}\tX_{H_m}.
\]
But the latter is equal to $\tau_\X\times p^{m}+O(1)$ as given by \cite[Theorem~1.10]{harris00}. This gives the formula for $\lambda_\Gamma(H_m)$.

We now turn our attention to the $\mu$-invariant. Since $\tX$ is finitely generated over $\Lambda(H)$ and hence over $\Lambda(H_m)$, the homology groups $H_i(H_m,\tX)$ are finitely generated over $\Zp$ for all $i\ge0$. Therefore, the same long exact sequence tells us that
\[
\mu_\Gamma(\X(p)_{H_m})=\mu_\Gamma(\X_{H_m}).
\] 
Following \cite[Lemma~5.2]{coateskim}, we have the equation
\[
\mu_{H_m\rtimes \Gamma}(\X)=\mu_\Gamma\left(\X(p)_{H_m}\right).
\]
But $[G:H_m\rtimes\Gamma]=p^m$,  so the formula \cite[(4)]{CSfine}) tells us that
\[
\mu_{H_m\rtimes \Gamma}(\X)=\mu_G(\X)\times p^m.
\]
 Hence the result follows on combining the last three equations.
\end{proof}

\subsection{Estimating maximal finite submodules and $\enn$}
In this section, we assume that the hypothesis $\mu_\Gamma(M_0)=0$ holds. We recall from Theorem~\ref{thm:MHG} that this is equivalent to  $\X$ being finitely generated over $\Lambda(H)$.
This allows us to deduce the following estimates.

\begin{prop}\label{prop:finitesub}
If $M_m'$ is the maximal finite $\Lambda(\Gamma)$-submodule of $M_m$, then
\[
 e(M_m')=O(p^m).
\]
\end{prop}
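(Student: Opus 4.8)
The goal is to bound the order of the maximal finite $\Lambda(\Gamma)$-submodule $M_m'$ of $M_m=\X_{\infty,m}=\X/B_{\infty,m}$ uniformly by $O(p^m)$. The plan is to exploit the fact that $M_m$ is a quotient of $\X_{H_m}=\X/I_{H_m}\X$ (up to the contribution of $B_{\infty,m}/I_{H_m}\X$, controlled by Corollary~\ref{cor:rank}), and that under the hypothesis $\mu_\Gamma(M_0)=0$ the module $\X$ is finitely generated over $\Lambda(H)$, hence $\X_{H_m}$ is a finitely generated $\Zp$-module whose rank grows like $\tau_\X\times p^m+O(1)$ by Proposition~\ref{prop:growthlambdamu} (with $d=2$). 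First I would record the surjection $\X_{H_m}\twoheadrightarrow M_m$ whose kernel is the image of $B_{\infty,m}$, and recall from Corollary~\ref{cor:rank} (with $m=0$ in the exponent since $d=2$, so $O(p^{(d-2)m})=O(1)$) that $B_{\infty,m}/I_{H_m}\X$ has bounded $\Zp$-rank; combined with the fact that $\Xnm$ is finite, this forces $\X_{H_m}$ itself to be a finitely generated $\Zp$-module with $\rank_{\Zp}\X_{H_m}=O(1)$ \emph{unless} I instead use that $\X$ is f.g.\ over $\Lambda(H)=\Zp[[H]]$ with $H\cong\Zp$, which directly gives $\rank_{\Zp}\X_{H_m}=\tau_\X p^m+O(1)$.

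\textbf{Key steps.} The main step is to show that the $p$-exponent of the torsion $\Zp$-submodule of $M_m$, and more specifically of its maximal finite $\Lambda(\Gamma)$-submodule, is governed by the $p$-exponent of the $\Zp$-torsion of $\X_{H_m}$, which is $O(p^m)$. Concretely I would argue: (1) Since $\X$ is finitely generated over $\Lambda(H)\cong\Zp[[X']]$ and $\mu_\Gamma(M_0)=0$ forces (via Theorem~\ref{thm:MHG}) that $\X$ has no nonzero $\Zp$-torsion-growth obstruction, the module $\X_{H_m}$ is a finitely generated $\Zp$-module and the $p$-exponent $e(\X_{H_m})$ of its torsion part satisfies $e(\X_{H_m})=O(p^m)$ — this is the $\Lambda(H)$-analogue of Iwasawa's formula, and for $d=2$ it is literally Theorem~\ref{thm:iwasawa} applied in the $H$-direction (or Corollary~\ref{cor:growthH} with the roles of the two $\Zp$'s interchanged, giving $e_{\infty,m}=O(p^m)$ since $\mu_H=\lambda_H\cdot 0$... rather, since $d=2$ the formula reads $\mu_H\times p^m+O(1)$ and $\mu_H(\X_{\Gamma_\infty})$ contributes the $p^m$ term). (2) From the surjection $\X_{H_m}\twoheadrightarrow M_m$, every finite submodule of $M_m$ is a subquotient of $\X_{H_m}$, so $e(M_m')\le e\bigl((\X_{H_m})_t\bigr)=O(p^m)$. (3) Conclude.

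\textbf{Main obstacle.} The delicate point is step (1): one must be careful that passing to $H_m$-coinvariants of $\X$ does not introduce $p$-torsion growing faster than $p^m$. Since $\X$ is finitely generated over $\Lambda(H)$ with $H\cong\Zp$, Iwasawa's structure theory (Theorem~\ref{thm:iwasawa}, applied with $\Gamma$ replaced by $H$) tells us precisely that $e\bigl((\X_{H_m})_t\bigr)=\mu_H(\X)\times p^m+\lambda_H(\X)\times m+O(1)=O(p^m)$, so this is in fact clean once one is allowed to invoke Theorem~\ref{thm:iwasawa} symmetrically in the two $\Zp$-directions. A secondary subtlety is relating $M_m'$ (maximal \emph{finite} $\Lambda(\Gamma)$-submodule, i.e.\ pseudo-null submodule) to the plain $\Zp$-torsion of $M_m$: since $M_m$ is a quotient of the finitely generated $\Zp$-module $\X_{H_m}$, the module $M_m$ is itself finitely generated over $\Zp$, so its maximal finite $\Lambda(\Gamma)$-submodule is contained in its $\Zp$-torsion submodule, and the bound $e(M_m')\le e((M_m)_t)\le e((\X_{H_m})_t)=O(p^m)$ follows. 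I would also note this is where the hypothesis $d=2$ is genuinely used, since for $d=2$ the error term from Corollary~\ref{cor:rank} is $O(1)$ rather than a growing power of $p$.
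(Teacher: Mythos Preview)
There is a genuine gap in step (2) of your argument. From the surjection $\X_{H_m}\twoheadrightarrow M_m$ you conclude that $e(M_m')\le e\bigl((M_m)_t\bigr)\le e\bigl((\X_{H_m})_t\bigr)$. The first inequality is fine, but the second is false in general: if $A\twoheadrightarrow B$ is a surjection of finitely generated $\Zp$-modules, torsion in $B$ need not come from torsion in $A$. The simplest counterexample is $\Zp\twoheadrightarrow\Zp/p^N$, where $e(A_t)=0$ but $e(B_t)=N$. In your situation the kernel $B_{\infty,m}/I_{H_m}\X$ does have bounded $\Zp$-rank (Corollary~\ref{cor:rank} with $d=2$), but bounded rank of the kernel gives no control on how deeply it sits inside $\X_{H_m}$, hence no bound on the torsion created in the quotient. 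So the information you have extracted---$e\bigl((\X_{H_m})_t\bigr)=O(p^m)$ and $\rank_{\Zp}\ker=O(1)$---is genuinely insufficient.

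The paper's proof circumvents this by exploiting a structural fact you did not use: since $d=2$ one may take all the $k_\nu$ equal to a single generator $h$ of $H$, and then the description from Lemma~\ref{lem:structure} collapses to $B_{\infty,m}=\Phi_{m/m_0}(h)\cdot B_{\infty,m_0}$. This yields the short exact sequence
\[
0\longrightarrow B_{\infty,m_0}/B_{\infty,m}\longrightarrow M_m\longrightarrow M_{m_0}\longrightarrow 0,
\]
where now $M_m$ sits in the \emph{middle}. For a short exact sequence $0\to A\to B\to C\to 0$ one does have $e(B_t)\le e(A_t)+e(C_t)$ (since $B_t\cap A=A_t$ and the image of $B_t$ in $C$ lies in $C_t$). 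Here $C=M_{m_0}$ is fixed, so $e(C_t)=O(1)$, and $A=(B_{\infty,m_0})_{\Phi_{m/m_0}(h)}$, whose torsion is controlled by Iwasawa's formula applied to the $\Lambda(H)$-torsion part of $B_{\infty,m_0}$, giving $e(A_t)=O(p^m)$. The point is that this filtration puts $M_m$ in the position where the inequality runs the right way; your filtration does not.
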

\begin{proof}
We recall from  \S\ref{S:estimateH} that there exist an integer $m_0$, $x_\nu\in\X$ and $k_\nu\in H\setminus H^p$ for each $\nu\in\Sigma\setminus\{\p\}$ such that
\[
B_{\infty,m}=I_{H_m}\X+\sum_\nu \Phi_{m/m_0}(k_\nu)\Lambda(H)\cdot x_\nu
\]
for all $m\ge m_0$. Since we are assuming that $d=2$ here, we may in fact assume that $k_\nu=h$ for all $\nu$, where $h$ is  some fixed topological generator of $H$.  In particular, we have the equation
\[
B_{\infty,m}=\Phi_{m/m_0}(h)B_{\infty,m_0}.
\]

Since we are assuming that $\X$ is finitely generated over $\Lambda(H)$ and $H\cong\Zp$, the structure theorem for finitely generated $\Lambda(H)$-modules tells us that
\[
B_{\infty,m_0}\sim \Lambda(H)^{r}\oplus T,
\]
where $\sim$ signifies a pseudo-isomorphism, $r=\rank_{\Lambda(H)}B_{\infty,m_0}$ and $T$ is a torsion  $\Lambda(H)$-module. Therefore,
\[
e(B_{\infty,m_0}/B_{\infty,m})=e((B_{\infty,m_0})_{\Phi_{m/m_0}(h)})=e(T_{\Phi_{m/m_0}(h)})=O(p^m),
\]
as given by Theorem~\ref{thm:iwasawa} (with $H$ replacing $\Gamma$).

The isomorphism theorem gives us the short exact sequence
\[
0\rightarrow B_{\infty,m_0}/B_{\infty,m}\rightarrow M_m\rightarrow M_{m_0}\rightarrow 0.
\]
Hence $e(M_m)\le e(B_{\infty,m_0}/B_{\infty,m})+e(M_{m_0})$, which finishes the proof.
\end{proof}

\begin{corollary}\label{cor:main}
If $\mu_\Gamma(M_0)=0$, then
\[
\enn=\tau_\X\times np^n+O(p^n).
\]
\end{corollary}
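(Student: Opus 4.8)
The plan is to feed the structural information about $M_m=\X_{\infty,m}$ into the growth formula \eqref{eq:gammagrowth} and then specialise to the diagonal $m=n$.

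First I would unpack the hypothesis and compute the Iwasawa invariants of every $M_m$. By Theorem~\ref{thm:MHG}, $\mu_\Gamma(M_0)=0$ is equivalent to $\X$ being finitely generated over $\Lambda(H)$, so $\X\in\MHG$ and Proposition~\ref{prop:growthlambdamu} is available; applying \eqref{eq:SES} with $n=\infty$ and $m=0$, whose first term $B_{\infty,0}/I_{H_0}\X$ is finite (Corollary~\ref{cor:rank}, $d=2$), identifies $\mu_\Gamma(M_0)$ with $\mu_\Gamma(\X_{H_0})=\mu_G(\X)$, whence $\mu_G(\X)=0$. For general $m$, \eqref{eq:SES} with $n=\infty$ reads $0\to B_{\infty,m}/I_{H_m}\X\to\X_{H_m}\to M_m\to 0$, and by Corollary~\ref{cor:rank} the first term is finitely generated over $\Zp$ of rank $O(1)$; since a $\Lambda(\Gamma)$-module finitely generated over $\Zp$ has $\mu_\Gamma=0$ and $\lambda_\Gamma$ equal to its $\Zp$-rank, Proposition~\ref{prop:growthlambdamu} gives
\[
\mu_\Gamma(M_m)=\mu_\Gamma(\X_{H_m})=\mu_G(\X)\times p^m=0,\qquad \lambda_\Gamma(M_m)=\lambda_\Gamma(\X_{H_m})-O(1)=\tau_\X\times p^m+O(1).
\]

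Next I would insert these into \eqref{eq:gammagrowth}: with $\mu_\Gamma(M_m)=0$ the exponential term disappears, so for $n\ge n_m$ one has $e_{n,m}=\lambda_\Gamma(M_m)(n-n_m)+e_{n_m,m}+e'_{n,m}-e'_{n_m,m}$, with $e'_{n,m},e'_{n_m,m}=O(p^m)$ by Proposition~\ref{prop:finitesub}. To put $m=n$ I must control the threshold $n_n$. Here I would use $B_{\infty,m}=\Phi_{m/m_0}(h)B_{\infty,m_0}$ (from the proof of Proposition~\ref{prop:finitesub}): it exhibits $M_m$ as an extension of the fixed module $M_{m_0}$ by $B_{\infty,m_0}/\Phi_{m/m_0}(h)B_{\infty,m_0}$, so the characteristic polynomial $F_m$ of $M_m$ is the product of that of $M_{m_0}$ with that of $B_{\infty,m_0}/\Phi_{m/m_0}(h)B_{\infty,m_0}$; and since $\Phi_{m/m_0}(h)=\prod_{j=m_0+1}^{m}\Phi_j(h)$ and $B_{\infty,m_0}$ is finitely generated of rank $\tau_\X$ over $\Lambda(H)$, the latter splits layer by layer into pieces of $\Zp$-rank $\tau_\X\phi(p^j)$ on which $\Gamma$ acts through the cyclotomic automorphism $h\mapsto h^{c}$. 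Tracking degrees through this decomposition shows every irreducible distinguished factor of $F_m$ has degree $O(p^m)$, so one may take $n_m=m+O(1)$; together with the stabilisation of $(M_m')_{\Gamma_n}$ for $n\gg0$, this makes $e_{n,n}$ and $e_{n',n}$ differ by $O(p^n)$ whenever $|n-n'|=O(1)$. Finally, $e_{n,n}$ itself I would evaluate via Lemma~\ref{lem:pseudosame}, $e_{n,n}=e\bigl(\Lambda(\Gamma)/(F_n,\omega_n)\bigr)+e\bigl((M_n')_{\Gamma_n}\bigr)$, the second summand being $O(p^n)$ by Proposition~\ref{prop:finitesub}; because $G$ is pro-$p$ the action of $\Gamma$ on $\X$ is unipotent modulo the maximal ideal, so the roots of $F_n$ have $p$-adic valuations bounded below on the natural $1/\phi(p^j)$-scale, which forces $\omega_n$ to lie just inside the linear régime for $F_n$ and $e\bigl(\Lambda(\Gamma)/(F_n,\omega_n)\bigr)=\lambda_\Gamma(M_n)\times n+O(p^n)=\tau_\X\times np^n+O(p^n)$.

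The genuine obstacle is the third step: quantifying $n_m$ and, more essentially, showing that descending from the threshold level $n_m$ down to level $n$ costs only $O(p^n)$ rather than something of size $mp^n$ — equivalently, that the characteristic polynomial of $M_n$ has the cyclotomic-type shape forced by $B_{\infty,n}=\Phi_{n/m_0}(h)B_{\infty,m_0}$ together with the pro-$p$ action of $\Gamma$ on $\X$. Everything else is routine manipulation of \eqref{eq:SES}, \eqref{eq:gammagrowth} and the already-established estimates on $\mu_\Gamma(M_m)$, $\lambda_\Gamma(M_m)$ and $e(M_m')$.
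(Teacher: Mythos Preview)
Your first two steps are correct and match the paper's approach. The gap is precisely where you locate it: the control of the threshold $n_m$. Your estimate $n_m=m+O(1)$ is too weak --- at $m=n$ it places you right at the boundary of applicability of \eqref{eq:gammagrowth}, and the subsequent appeal to ``unipotent action'' and ``valuations bounded below on the $1/\phi(p^j)$-scale'' is a heuristic, not a proof. In particular, nothing you have written rules out irreducible factors of $F_n$ of degree comparable to $p^n$, in which case Corollary~\ref{cor:kernel} simply does not apply at level $n$.

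The paper closes this gap with a much stronger, uniform bound: by \cite[Corollary~A.4]{DLsha}, when $\X$ is finitely generated over $\Lambda(H)$ there is an integer $\rho$ \emph{independent of $m$} such that every irreducible distinguished factor of the $\Lambda(\Gamma)$-characteristic ideal of $\X_{H_m}$ (hence of its quotient $M_m$) has degree at most $\rho$. One may therefore take $n_m=n_0$ for a single $n_0$ with $p^{n_0-1}(p-1)>\rho$, so that \eqref{eq:gammagrowth} holds for all $m$ and all $n\ge n_0$; setting $m=n$ is then immediate, and the formula follows from Propositions~\ref{prop:growthlambdamu} and~\ref{prop:finitesub} together with Corollary~\ref{cor:growthH} (which handles the base term $e_{n_0,n}=O(p^n)$). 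Your intuition that the pro-$p$ action of $\Gamma$ on $\X$ is unipotent modulo the maximal ideal is in fact the seed of the argument behind the cited result --- it is what keeps the irreducible factors small even as $\lambda_\Gamma(M_m)$ grows like $p^m$ --- but you would need to develop it into an actual bound rather than invoke it qualitatively.
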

\begin{proof}
Under our assumption on $\X$, \cite[Corollary~A.4]{DLsha} tells us  that there exists an integer $\rho$ such that the $\Lambda(\Gamma)$-characteristic ideal of $\X_{H_m}$ factorises into polynomials whose degrees are bounded by $\rho$. The same can be said about $M_m$ given that it is a quotient of $\X_{H_m}$. In particular, by Proposition~\ref{prop:gammagrowth}, the estimates in \eqref{eq:gammagrowth} hold whenever $p^{n-1}(p-1)>\rho$. Hence, we may choose $n_m=n_{0}$ for some fixed $n_0$ that is independent of $m$. 

We recall from  Corollary~\ref{cor:growthH} that $e_{n_0,m}=O(p^m)$.
Furthermore, if $\X$ is finitely generated over $\Lambda(H)$, then $\mu_G(\X)=0$. Hence, our result follows on combining \eqref{eq:gammagrowth} with Propositions~\ref{prop:growthlambdamu} and \ref{prop:finitesub}.
\end{proof}

\section{Bounding $\tenn$ in the case $d=2$}
In this section, we assume that $d=2$ and $\X\in\MHG$. Since $X$ is torsion over $\MHG$ in this setting, we have already seen in Corollary~\ref{cor:perbet} that the asymptotic formula of Perbert can be  improved to 
\[
\tenn=\mu\times p^{2n}+O(np^n).
\]
However, the error term is larger than that of Corollary~\ref{cor:main}. We now show that we may obtain an upper bound on $\tenn$ with the same error term under our assumption  $\X\in\MHG$.

\begin{prop}\label{prop:estimateXGnn}
Assume that $\X\in\MHG$ and write $\tX=\X/\X(p)$. Then,
\[e(\X_{G_{n,n}})\le\mu_G(\X)\times p^{2n}+\tau_\X\times np^n+O(p^n),\]
where $\tau_\X=\rank_{\Lambda(H)}\tX$.
\end{prop}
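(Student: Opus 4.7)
The plan is to exploit the short exact sequence \eqref{eq:XP},
\[
0 \to \X(p) \to \X \to \tX \to 0,
\]
and take $G_{n,n}$-coinvariants. The resulting right-exact sequence
\[
\X(p)_{G_{n,n}} \to \X_{G_{n,n}} \to \tX_{G_{n,n}} \to 0,
\]
combined with the finiteness of $\X(p)_{G_{n,n}}$ (which follows from the pseudo-isomorphism used below), yields the standard inequality
\[
e(\X_{G_{n,n}}) \le e(\X(p)_{G_{n,n}}) + e(\tX_{G_{n,n}})
\]
for $p$-exponents in a short exact sequence of finitely generated $\Zp$-modules, reducing the problem to bounding each summand separately.

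To bound $e(\X(p)_{G_{n,n}})$, I would invoke the structure theorem of \cite{venjakob02} to obtain a pseudo-isomorphism $\X(p) \sim \bigoplus_i \Lambda(G)/p^{n_i}$ with $\sum_i n_i = \mu_G(\X)$. Since $d=2$ gives $|G/G_{n,n}| = p^{2n}$, each summand's coinvariants $(\Zp/p^{n_i})[G/G_{n,n}]$ have $p$-exponent $n_i p^{2n}$, summing to $\mu_G(\X)\, p^{2n}$. Analysing the pseudo-null kernel and cokernel via the long exact homology sequences contributes an error of $O(p^n)$, yielding $e(\X(p)_{G_{n,n}}) \le \mu_G(\X)\, p^{2n} + O(p^n)$.

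To bound $e(\tX_{G_{n,n}}) = e((\tX_{H_n})_{\Gamma_n})$, I would mimic the proof of Corollary \ref{cor:main} with $\tX$ in place of $M_m$. Because $\tX$ is $\Zp$-torsion-free we have $\mu_G(\tX) = 0$, so Proposition \ref{prop:growthlambdamu} applied to $\tX$ yields $\mu_\Gamma(\tX_{H_n}) = 0$ and $\lambda_\Gamma(\tX_{H_n}) = \tau_\X p^n + O(1)$. Using the $\Lambda(H)$-decomposition $\tX \sim \Lambda(H)^{\tau_\X} \oplus \bigoplus_k \Lambda(H)/f_k$ (with no $p^{n_j}$ summands since $\mu_H(\tX) = 0$) together with Corollary \ref{cor:cyclicformula}, I would show that the $\Zp$-torsion of $\tX_{H_n}$---which contains its maximal pseudo-null $\Lambda(\Gamma)$-submodule---has $p$-exponent $O(n)$. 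Invoking \cite[Corollary~A.4]{DLsha} to bound the degrees of the distinguished polynomials in the $\Lambda(\Gamma)$-characteristic ideal of $\tX_{H_n}$ uniformly in $n$, Proposition \ref{prop:gammagrowth} (or Theorem \ref{thm:iwasawa}) then gives $e(\tX_{G_{n,n}}) \le \tau_\X\, n p^n + O(p^n)$. Adding this to the estimate for $e(\X(p)_{G_{n,n}})$ completes the proof.

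The main obstacle is establishing the $O(p^n)$ errors uniformly in $n$. For the $\X(p)$ piece, one needs that pseudo-null $\Lambda(G)$-modules $N$ satisfy $e(H_i(G_{n,n}, N)) = O(p^n)$ for $i \in \{0,1\}$ when $d=2$, a statement that should follow from the codimension-$\ge 2$ support of pseudo-nulls in the Auslander-regular ring $\Lambda(G)$. For the $\tX_{H_n}$ piece, one must handle the possibility that the $\Lambda(\Gamma)$-characteristic ideal of $\tX_{H_n}$ shares factors with $\omega_n$, which requires either a careful application of Iwasawa's theorem (where the coprimality hypothesis of Proposition \ref{prop:gammagrowth} can be bypassed) or a preliminary splitting of the shared factors.
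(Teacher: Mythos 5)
Your proposal is correct and follows essentially the same route as the paper: the same coinvariants sequence from \eqref{eq:XP}, the same pseudo-isomorphism $\X(p)\sim\bigoplus_i\Lambda(G)/p^{n_i}$ giving the $\mu_G(\X)\times p^{2n}$ term, and the same treatment of $\tX$ via $\mu_\Gamma(\tX_{H_n})=0$, $\lambda_\Gamma(\tX_{H_n})=\tau_\X p^n+O(1)$ and the $O(p^n)$ control of finite submodules. The pseudo-null error estimate you flag as the main obstacle is exactly what the paper imports from \cite[Lemma~4.2]{DLsha} (and the uniformity in $n$ from \cite[Corollary~A.4]{DLsha}, as you note), so your argument is in substance the published one.
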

\begin{proof}
From  \eqref{eq:XP}, we obtain the long exact sequence
\begin{equation}
\cdots \rightarrow \X(p)_{G_{n,n}}\rightarrow \X_{G_{n,n}}\rightarrow \tX_{G_{n,n}}\rightarrow 0.
\end{equation}
 We shall use $e(\tX_{G_{n,n}})$ and $e(\X(p)_{G_{n,n}})$ to bound $e(\X_{G_{n,n}})$.

Since $\tX$ is finitely generated over $\Lambda(H)$, we have already seen in the proofs of Propositions~\ref{prop:growthlambdamu} and \ref{prop:finitesub} that  $\tX_{H_n}$ is a finitely generated $\Lambda(\Gamma)$-module with 
\[
\mu_\Gamma(\tX_{H_n})=0,\quad \lambda_\Gamma(\tX_{H_n})=\tau_\X\times p^n+O(1), \quad e(\tX'_{H_n})=O(p^n).
\]
Consequently, $e(\tX_{G_{n,n}})=\tau_\X\times np^n+O(p^n)$ by Theorem~\ref{thm:iwasawa}.

Since $\X(p)$ is $\Zp$-torsion and finitely generated over $\Lambda(G)$, it follows that $\X(p)_{G_{n,n}}$ is finite. Recall that there is a pseudo-isomorphism of $\Lambda(G)$-modules
\[
\X(p)\sim\bigoplus_i \Lambda(G)/p^{n_i}
\]
for some integers $n_i$. In general, if $M$ and $N$ are pseudo-isomorphic $\Lambda(G)$-modules that are both $\Zp$-torsion, then \cite[Lemma~4.2]{DLsha} tells us that 
\[
\#M_{G_{n,n}}=\#N_{G_{n,n}}\times p^{O(p^{n})}
\]
under our assumptions. Therefore, 
\[
\#\X(p)_{G_{n,n}}=\#\bigoplus_i \Zp[G/G_{n,n}]/p^{n_i}\times p^{O(p^{n})}
\]
and hence
\[
e(\X(p)_{G_{n,n}})=p^{2n}\times\sum_i n_i+p^{n}=\mu_{G}(\X)\times p^{2n}+O(p^{n}).
\]
This finishes our proof.
\end{proof}

\begin{corollary}\label{cor:upper}
We have the upper bound
\[
\tenn\le \mu_G(\X)\times p^{2n}+\tau_\X\times np^n+O(p^n)
\]
\end{corollary}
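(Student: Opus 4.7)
The plan is to deduce Corollary~\ref{cor:upper} from Proposition~\ref{prop:estimateXGnn} by passing from the bound on $e(\X_{G_{n,n}})$ to a bound on $\tenn = e(\X_{n,n}/p^n)$. The bridge is the short exact sequence \eqref{eq:SES} specialised to $(n,m)=(n,n)$,
\[
0 \longrightarrow B_{n,n}/I_{G_{n,n}}\X \longrightarrow \X_{G_{n,n}} \longrightarrow \X_{n,n} \longrightarrow 0.
\]
Reducing modulo $p^n$ yields a surjection $\X_{G_{n,n}}/p^n \twoheadrightarrow \X_{n,n}/p^n$, whence $\tenn \le e(\X_{G_{n,n}}/p^n)$.

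Next I would compare $e(\X_{G_{n,n}}/p^n)$ with $e(\X_{G_{n,n}})$ plus a controlled correction coming from the free $\Zp$-part of $\X_{G_{n,n}}$. Because $\X_{n,n}$ is finite, the above exact sequence identifies $\rank_{\Zp}\X_{G_{n,n}}$ with $\rank_{\Zp}(B_{n,n}/I_{G_{n,n}}\X)$, and Corollary~\ref{cor:rank} shows the latter is $O(p^{(d-2)n}) = O(1)$ under the hypothesis $d=2$, and crucially independent of $n$. Writing $\X_{G_{n,n}} \cong \Zp^r \oplus T$ with $T$ the finite torsion part, an elementary counting argument comparing the elementary divisors of $T$ with $p^n$ gives
\[
e(\X_{G_{n,n}}/p^n) \le rn + e(\X_{G_{n,n}}) = O(n) + e(\X_{G_{n,n}}).
\]

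Combining these two steps with the upper bound $e(\X_{G_{n,n}}) \le \mu_G(\X)\cdot p^{2n} + \tau_\X\cdot np^n + O(p^n)$ supplied by Proposition~\ref{prop:estimateXGnn}, and absorbing the $O(n)$ correction into $O(p^n)$, yields the required inequality. I do not expect any serious obstacle: the essential point is that the assumption $d=2$ is precisely what forces the exponent $(d-2)n$ in Corollary~\ref{cor:rank} to vanish, so that the additional $rn$ error term coming from the free part of $\X_{G_{n,n}}$ stays of lower order than the main $O(p^n)$ error already present in Proposition~\ref{prop:estimateXGnn}.
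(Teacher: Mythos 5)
Your proposal is correct and follows essentially the same route as the paper: reduce the exact sequence \eqref{eq:SES} mod $p^n$ to get $\tenn\le e((\X_{G_{n,n}})/p^n)$, split $\X_{G_{n,n}}$ into its free and torsion parts, use Corollary~\ref{cor:rank} with $d=2$ to see the free rank is $O(1)$, and invoke Proposition~\ref{prop:estimateXGnn} for the torsion part. Your explicit remark that the finiteness of $\X_{n,n}$ identifies $\rank_{\Zp}\X_{G_{n,n}}$ with $\rank_{\Zp}(B_{n,n}/I_{G_{n,n}}\X)$ merely spells out a step the paper leaves implicit.
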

\begin{proof}
First of all, we observe that $\tenn\le e(\X_{G_{n,n}})/p^n)$ thanks to the short exact sequence \eqref{eq:SES}. Therefore, it is enough to bound $e(\X_{G_{n,n}})/p^n)$.

Since $\X_{G_{n,n}}$ is a finitely generated $\Zp$-module, it is isomorphic to
\[
\Zp^{\oplus a_n}\oplus T_n
\]
for some integer $a_n\ge0$ and some finite $\Zp$-module $T_n$. This gives an isomorphism of abelian groups
\[
(\X_{G_{n,n}})/p^n\cong (\ZZ/p^n)^{a_n} \times T_n/p^n.
\]
In particular, this tells us that
\[
e((\X_{G_{n,n}})/p^n)=a_n\times n+e(T_n/p^n)\le a_n\times n+e(T_n).
\]
Since we are assuming $d=2$, Corollary~\ref{cor:rank} tells us that $a_n=O(1)$. Hence we are done by the bound on $e(T_n)$  given in Proposition~\ref{prop:estimateXGnn}.
\end{proof}

\bibliographystyle{amsalpha}
\bibliography{references}

\newcommand{\etalchar}[1]{$^{#1}$}
\providecommand{\bysame}{\leavevmode\hbox to3em{\hrulefill}\thinspace}
\providecommand{\MR}{\relax\ifhmode\unskip\space\fi MR }
\providecommand{\MRhref}[2]{%
  \href{http://www.ams.org/mathscinet-getitem?mr=#1}{#2}
}
\providecommand{\href}[2]{#2}
\begin{thebibliography}{CFK{\etalchar{+}}05}

\bibitem[BH97]{balisterhowson}
P.~N. Balister and S.~Howson, \emph{Note on {N}akayama's lemma for compact
  {$\Lambda$}-modules}, Asian J. Math. \textbf{1} (1997), no.~2, 224--229.

\bibitem[CFK{\etalchar{+}}05]{CKFVS}
John Coates, Takako Fukaya, Kazuya Kato, Ramdorai Sujatha, and Otmar Venjakob,
  \emph{The {$\operatorname{GL}_2$} main conjecture for elliptic curves without
  complex multiplication}, Publ. Math. Inst. Hautes {\'E}tudes Sci.
  \textbf{101} (2005), 163--208.

\bibitem[CH01]{coateshowson}
John Coates and Susan Howson, \emph{Euler characteristics and elliptic curves.
  {II}}, J. Math. Soc. Japan \textbf{53} (2001), no.~1, 175--235.

\bibitem[CK13]{coateskim}
John Coates and Dohyeong Kim, \emph{Introduction to the work of {M}. {K}akde on
  the non-commutative main conjectures for totally real fields}, Noncommutative
  {I}wasawa main conjectures over totally real fields, Springer Proc. Math.
  Stat., vol.~29, Springer, Heidelberg, 2013, pp.~1--22.

\bibitem[CM81]{cuocomonsky}
Albert~A. Cuoco and Paul Monsky, \emph{Class numbers in {${\mathbb
  Z}^{d}_{p}$}-extensions}, Math. Ann. \textbf{255} (1981), no.~2, 235--258.

\bibitem[CS05]{CSfine}
J.~Coates and R.~Sujatha, \emph{Fine {S}elmer groups of elliptic curves over
  {$p$}-adic {L}ie extensions}, Math. Ann. \textbf{331} (2005), no.~4,
  809--839.

\bibitem[DL16]{DLsha}
Daniel Delbourgo and Antonio Lei, \emph{Estimating the growth in
  {M}ordell-{W}eil ranks and {S}hafarevich-{T}ate groups over {L}ie
  extensions}, to appear in Ramanujan J., 2016.

\bibitem[FW79]{FW}
Bruce Ferrero and Lawrence~C. Washington, \emph{The {I}wasawa invariant {$\mu
  _{p}$} vanishes for abelian number fields}, Ann. of Math. (2) \textbf{109}
  (1979), no.~2, 377--395.

\bibitem[Har00]{harris00}
Michael Harris, \emph{Correction to: ``{$p$}-adic representations arising from
  descent on abelian varieties'' [{C}ompositio {M}ath.\ {\bf 39} (1979), no.\
  2, 177--245]}, Compositio Math. \textbf{121} (2000), no.~1, 105--108.

\bibitem[Iwa73a]{iwasawa}
Kenkichi Iwasawa, \emph{On {${\mathbb Z}_{l}$}-extensions of algebraic number
  fields}, Ann. of Math. (2) \textbf{98} (1973), 246--326.

\bibitem[Iwa73b]{iwasawa73}
\bysame, \emph{On the {$\mu $}-invariants of {$\mathbb{Z}_{\ell}$}-extensions},
  Number theory, algebraic geometry and commutative algebra, in honor of
  {Y}asuo {A}kizuki, Kinokuniya, Tokyo, 1973, pp.~1--11.

\bibitem[Kak13]{kakde}
Mahesh Kakde, \emph{The main conjecture of {I}wasawa theory for totally real
  fields}, Invent. Math. \textbf{193} (2013), no.~3, 539--626.

\bibitem[Kob03]{kobayashi03}
Shin-ichi Kobayashi, \emph{Iwasawa theory for elliptic curves at supersingular
  primes}, Invent. Math. \textbf{152} (2003), no.~1, 1--36.

\bibitem[Lee13]{lee}
Chern-Yang Lee, \emph{Non-commutative {I}wasawa theory of elliptic curves at
  primes of multiplicative reduction}, Math. Proc. Cambridge Philos. Soc.
  \textbf{154} (2013), no.~2, 303--324.

\bibitem[MF16]{limfine}
Lim Meng~Fai, \emph{Fine {S}elmer groups of congruent {G}alois
  representations}, 2016, arXiv:1603.08640.

\bibitem[NSW08]{neukirch}
J{\"u}rgen Neukirch, Alexander Schmidt, and Kay Wingberg, \emph{Cohomology of
  number fields}, second ed., Grundlehren der Mathematischen Wissenschaften
  [Fundamental Principles of Mathematical Sciences], vol. 323, Springer-Verlag,
  Berlin, 2008.

\bibitem[OV02]{ochivenj}
Yoshihiro Ochi and Otmar Venjakob, \emph{On the structure of {S}elmer groups
  over {$p$}-adic {L}ie extensions}, J. Algebraic Geom. \textbf{11} (2002),
  no.~3, 547--580.

\bibitem[Per11]{perbet}
Guillaume Perbet, \emph{Sur les invariants d'{I}wasawa dans les extensions de
  {L}ie {$p$}-adiques}, Algebra Number Theory \textbf{5} (2011), no.~6,
  819--848.

\bibitem[RW11]{RW}
J{\"u}rgen Ritter and Alfred Weiss, \emph{On the ``main conjecture'' of
  equivariant {I}wasawa theory}, J. Amer. Math. Soc. \textbf{24} (2011), no.~4,
  1015--1050.

\bibitem[Ser63]{serre62}
Jean-Pierre Serre, \emph{Cohomologie galoisienne}, Cours au Coll\`ege de
  France, vol. 1962, Springer-Verlag, Berlin-Heidelberg-New York, 1962/1963.

\bibitem[Ven02]{venjakob02}
Otmar Venjakob, \emph{On the structure theory of the {I}wasawa algebra of a
  {$p$}-adic {L}ie group}, J. Eur. Math. Soc. (JEMS) \textbf{4} (2002), no.~3,
  271--311.

\bibitem[Ven03a]{venjakobweier}
\bysame, \emph{A non-commutative {W}eierstrass preparation theorem and
  applications to {I}wasawa theory}, J. Reine Angew. Math. \textbf{559} (2003),
  153--191, With an appendix by Denis Vogel.

\bibitem[Ven03b]{venjakobcompo}
\bysame, \emph{On the {I}wasawa theory of {$p$}-adic {L}ie extensions},
  Compositio Math. \textbf{138} (2003), no.~1, 1--54.

\bibitem[Viv04]{viviani}
Filippo Viviani, \emph{Ramification groups and {A}rtin conductors of radical
  extensions of {$\Bbb Q$}}, J. Th\'eor. Nombres Bordeaux \textbf{16} (2004),
  no.~3, 779--816.

\bibitem[Was97]{washington}
Lawrence~C. Washington, \emph{Introduction to cyclotomic fields}, second ed.,
  Graduate Texts in Mathematics, vol.~83, Springer-Verlag, New York, 1997.

\end{thebibliography}

\end{document}